\newcommand{\be}{\begin{equation}}
\newcommand{\ee}{\end{equation}}
\newcommand{\ba}{\begin{equation}\begin{aligned}}
\newcommand{\ea}{\end{aligned}\end{equation}}
\newcommand{\ex}{\mathrm{e}}
\newcommand{\di}{\mathrm{d}}
\newcommand{\vf}{\varphi}
\newcommand{\ve}{\varepsilon}
\newcommand{\cF}{{\mathcal F}}
\newcommand{\wt}{\widetilde}
\newcommand{\mbN}{{\mathbb N}}
\newcommand{\mbZ}{{\mathbb Z}}
\newcommand{\mbR}{{\mathbb R}}
\newcommand{\1}{\mathbb{I}}
\renewcommand{\lg}{\langle}
\newcommand{\rg}{\rangle}
\newcommand{\Var}{{\mathop{\rm Var}}}
\newcommand{\Pb}{{\mathop{\rm P}}}
\renewcommand{\P}{{\mathop{\rm P}}}
\newcommand{\cX}{\mathcal{X}}
\newcommand{\cN}{\mathcal{N}}
\newcommand{\cZ}{\mathcal{Z}}
\newcommand{\E}{{\mathop{\rm E}}}
\newcommand{\sgn}{\mathop{\rm sgn}}
\newcommand{\toP}{\overset{\Pb}{\to}}
\theoremstyle{plain}
\newtheorem{thm}{Theorem}[section]
\newtheorem{lem}[thm]{Lemma}
\newtheorem{corl}[thm]{Corollary}
\theoremstyle{definition}
\newtheorem{remk}[thm]{Remark}
\newtheorem{expl}[thm]{Example}
\numberwithin{equation}{section}
\let\oldmarginpar\marginpar
\renewcommand{\marginpar}[1]{\oldmarginpar{\scriptsize\texttt{\color{red}{#1}}}}
\title{Walsh's Brownian Motion\\ and Donsker Scaling Limits\\ of Perturbed Random Walks
}
\author{Ilya Pavlyukevich\footnote{Institute of Mathematics, Friedrich Schiller University Jena, Ernst--Abbe--Platz 2, 
07743 Jena, Germany; ilya.pavlyukevich@uni-jena.de}
 \quad    and \quad Andrey Pilipenko\footnote{Institute of Mathematics, National Academy of Sciences of Ukraine, Tereshchenkivska Str.\ 3, 01601, Kyiv, 
Ukraine} \footnote{National Technical University of Ukraine 
``Igor Sikorsky Kyiv Polytechnic Institute'',
ave.\ Pobedy 37, Kyiv 03056, Ukraine; pilipenko.ay@gmail.com} 
    }
\begin{document}

\maketitle

\begin{abstract}
In this paper we study Markov chains with the state space given by the coordinate axes of $\mathbb R^m$,
$m \geq 2$,
whose step sizes on each positive half-axis are distributed according to a centered probability distribution
with variance $v_i^2 \in (0, \infty)$, $i = 1,\ldots, m$.
Under very mild assumptions on the jumps sizes on the negative
half-axes, we show that the Donsker scaling limit of such Markov chains is a Walsh Brownian motion
whose weights are determined explicitly in terms of stationary distributions of certain embedded Markov
chains. This convergence result is applied to integer-valued random walks perturbed on a finite subset of
$\mathbb Z$ called a \emph{membrane}. We show that their Donsker scaling limit is an oscillating skew Brownian motion.
\end{abstract}

\noindent
\textbf{Keywords:} Walsh Brownian motion; skew Brownian motion; oscillating Brownian motion; oscillating skew
Brownian motion; Donsker scaling limit; local time; perturbed Markov chain; perturbed random walk

\smallskip
\noindent
\textbf{MSC 2020:}   60F17, 60G42, 60G50, 60H10, 60J10, 60J55, 60K37

\tableofcontents
\smallskip

\section{Introduction}

Let $S_\xi(n):=\xi_1+\dots +\xi_n$, $n\geq 0$, be a random walk where $(\xi_k)$ are independent copies of an integer-valued random variable $\xi$. 
The well known Donsker invariance principle (see, e.g., Theorem 14.1 in Billingsley \cite{billingsley2013convergence})
states that a properly scaled random walk converges in law to the standard Brownian motion if $\E \xi=0$ and 
$v^2:= \operatorname{Var} \xi\in (0,\infty)$. Namely,
\ba
\frac{S_\xi([n\cdot])}{v \sqrt n} \Rightarrow W(\cdot), \ n\to\infty,
\ea
in the Skorokhod space $D=D([0,\infty),\mathbb R)$.

Consider an integer-valued Markov chain $X$ whose transition probabilities coincide with transition probabilities of $S_\xi$ everywhere except for a finite set $A\subset\mbZ$. We call the set $A$ a \emph{membrane} and say that $X$ is a random walk perturbed on $A$. 
It appears  that the Donsker scaling limit of $X$ is not a Brownian motion any longer but a diffusion with a singular drift. 

The first result on this topic belongs to Harrison and Shepp \cite{HShepp-81}, who considered 
a symmetric Bernoulli random walk perturbed on a one-point membrane $A=\{0\}$. They proved that if
\ba
&\Pb(X(1)=x\pm 1\, |\, X(0)=x)=\frac12,\quad x\in\mbZ\backslash\{0\},\\
&\Pb(X(1)=  1\, |\, X(0)=0)= p\in[0,1], \\
&\Pb(X(1)=  -1\, |\, X(0)=0)=1-p,
\ea
then the processes $\{\frac{X([n\cdot])}{\sqrt{n}}\}_{n\geq 1}$ converge in distribution to a \emph{skew Brownian motion} (SBM) $W^\text{skew}_\gamma$ with permeability parameter
$\gamma=2p-1$. Recall that the SBM $W^\text{skew}_\gamma$ with permeability parameter $\gamma\in[-1,1]$ is  a continuous homogeneous  Markov process with transition probability density function 
\ba
p_t(x,y)= \varphi_t (x-y) + \gamma\sgn(y)\varphi_t (|x|+|y|),\quad x,y\in\mathbb{R},\quad t>0,
\ea
where $\vf_t(x)=(2\pi t)^{-1/2}\ex^{-\frac{x^2}{2t}}$, $x\in\mathbb R$, $t>0$. Another construction of the SBM was suggested by Itô and
McKean, see \cite[Problem 1, Section 4.2]{ItoMcKean-65} and Walsh \cite{walsh1978diffusion}. There, the SBM is obtained from a standard
reflecting Brownian motion by flipping its excursions independently with probability $1- p$, and leaving it
positive with probability $p$. An extensive survey on properties of a SBM can be found in Lejay \cite{Lejay-06}.

The approach by Harrison and Shepp \cite{HShepp-81} is classical.
To show convergence, they verified tightness and convergence of finite-dimensional distributions of the family
$\{\frac{X([n\cdot])}{\sqrt{n}}\}_{n\geq 1}$,
which can be done directly utilizing the simple structure of transition probabilities and Andr\'e's reflection principle.

The case of a general finite membrane $A =\{-d,\ldots , d\}$, $d \geq 0$, and arbitrary non-unit jumps outside of $A$
is much harder to deal with.
Some partial results on functional limit theorems that generalize the result by Harrison and Shepp \cite{HShepp-81} were obtained by 
Minlos and Zhizhina \cite{minlos1997limit}, Pilipenko et al.\
\cite{pilipenko2012limit,IksanovPilipenko2016,pilipenko2015limit,pilipenko2015impurity},
Ngo and Peign\'e \cite{NgoPeigne19}. In these works, the convergence to a SBM was established
under various restrictive assumptions, such as boundedness of jumps outside of the membrane, one-point membrane structure, etc. 

In this paper we obtain a very general convergence results (Theorems \ref{thm:Walsh_lim_pertRW}
and \ref{thm:Skew_lim_pertRW}) that use a minimal
set of technical assumptions and generalize all the previous work. Our argument relies on consideration of
a SBM as a particular case of a two-ray Walsh Brownian motion (WBM).

Recall that an $m$-ray WBM with non-negative weights $\mathbf p = (p_1,\ldots , p_m)$, $p_1 +\cdots + p_m = 1$, can be
constructed similarly to the approach by It\^o and McKean. Consider $m$ rays with a common endpoint $0$. A
WBM is a time-homogeneous continuous strong Markov process that spends zero time at $0$ and behaves like
a standard Brownian motion on each ray up to hitting $0$. Its excursions ``select'' the $k$-th ray with probability
$p_k$, see Walsh \cite{walsh1978diffusion} and Section 2 below. It is clear, that a 2-ray WBM can be naturally identified with a
SBM with $\gamma = p_1 - p_2$.

Starting with a perturbed Markov chain $X$ on $\mathbb Z$ with a membrane $A = \{-d,\ldots , d\}$, we will construct an
auxiliary Markov chain $\cX$ on the enlarged state space $\mathbb Z\times\{-, +\}$, such that the ``layers''
$\mathbb Z\times\{-\}$ and $\mathbb Z\times\{+\}$
will be in some sense identified with the negative and positive rays $\{\ldots ,-d - 2, -d - 1\}$ and $\{d + 1, d + 2,\ldots\}$
located to the left and to the right of the membrane of the Markov chain $X$. The transition probabilities of
$\cX$ on the positive layers $\mathbb N \times \{-\}$ and $\mathbb N \times \{+\}$ will coincide with those of $X$ to the left and to the right of the
membrane. The dynamics of $X$ within the membrane as well as jumps over the membrane will be mimicked
by (short) visits of $\cX$ to the negative layers $\{\ldots , -2, -1, 0\}\times \{-\}$ and $\{\ldots ,-2,-1, 0\}\times \{+\}$. Employing
the martingale characterization technique, we establish convergence of the Donsker scalings of the Markov
chain $\cX$ to a WBM, and thus, convergence of a properly rescaled Markov chain $X$ to a SBM.

This method of unfolding of a perturbed one-dimensional Markov chain into a multidimensional Markov
chain was initially suggested by Iksanov and Pilipenko in \cite{IksanovPilipenko2016}. In Bogdanskii et al.\ \cite{BPP22},
this method was
used to study convergence of $m$-dimensional random walks perturbed on a two-sided periodic hyperplane
membrane. In particular, in our proofs we will utilize a martingale characterization of the WBM obtained in
\cite{BPP22}, see Section 2. We emphasize that our approach allows to treat random walks with different variances on
different sides of the membrane. This case has not been treated in previous works with the only exception
of the paper \cite{NgoPeigne19}. The general result on convergence of ``unfolded'' Markov chains to a $m$-ray WBM is our
Theorem~\ref{thm:Walsh_lim_pertRW}.

For reader's convenience, we formulate here an easy-to-use version of the assumptions a convergence
result for scaling limits of perturbed random walks on $\mathbb Z$.

Let $(X(k))_{k\geq 0}$ be an integer-valued time-homogeneous Markov chain with a membrane $\{-d, \dots,d\}$, $d\geq 0$, such that  
\ba
&\P(X(1)=x+y|X(0)=x)=\P(\xi_+=y),\quad x>d,\\
&\P(X(1)=x+y|X(0)=x)=\P(\xi_-=y),\quad x<-d,
\ea
where the integer valued random variables $\xi_-$, $\xi_+$ satisfy  
\ba
\E\xi_\pm=0,\quad v_\pm^2=\operatorname{Var}\xi_\pm\in (0,+\infty),
\ea
and each of random variables $\xi_+$ and $\xi_-$ generates a 1-arithmetic random walk.

Assume also that  the states $\mbZ\backslash\{-d,\dots,d\}$ of the Markov chain $X$ communicate, 
$X$ exits from the membrane with probability one, and jumps from the membrane are integrable, i.e., 
\ba
\max_{|x|\leq d} \E\big[|X(1)|\, \big|\,  X(0)=x\big]<\infty.
\ea
Let $\vf\colon \mbR\to\mbR$ be the following piece-wise linear function: 
\ba
\label{e:phi}
\vf(x):=x\cdot v^{-1}_{\sgn(x)}:=x\big( v_+^{-1}{\1_{x\geq 0}} +v_-^{-1}{\1_{x< 0}}\big).
\ea
Then for any initial distribution of $X(0)$,  the Donsker scaling
$\big\{\vf\big(\frac{X([n\cdot])}{\sqrt{n}}\big)\big\}_{n\geq 1}$ weakly converges in $D([0,\infty),\mbR)$
to a SBM
 $W^\text{skew}_\gamma(\cdot)$
 starting at 0 with some permeability parameter $\gamma\in(-1,1)$.
 The rigorous  statement of the result as well as the explicit value of the permeability parameter $\gamma$
will follow from Theorem \ref{thm:Skew_lim_pertRW} and Remark~\ref{r:3.7}. Its formulation requires a consideration
of additional construction: entrance and exit embedded Markov chains, 
existence of their stationary distributions, etc. 
However, formula \eqref{eq:gamma_probab} for the permeability parameter $\gamma$ is short and natural.

This result allows us to show that the limit of the Donsker scaling
$\big\{\frac{X([n\cdot])}{\sqrt{n}} \big\}_{n\geq 1}$ is a oscillating SBM
(see Corollary \ref{c:sobm}), which is a natural generalization of the oscillating Brownian motion
originally introduced by Keilson and Wellner in \cite{keilson1978oscillating}. The oscillating SBM
is a diffusion that behaves like a Brownian motion with variance $v_+^2$ on the positive half-line and like 
a Brownian motion with variance $v_-^2$ on the negative half-line, and exhibits an infinitesimal push at $0$ in the
negative or positive direction.
We mention here the work \cite{hairer2010one} by Hairer and Manson who obtained the process $Y$
as a limit in the periodic homogenization problem with an interface. 

We will also prove a result on convergence of Donsker scalings of a perturbed random walk on a graph
$\mathbb N^m \cup \{0\}$.
Under natural assumptions on integrability of jumps the limit process will be a WBM, too, see
Theorem~\ref{thm:Walsh_non_rigorous}.
The result of Theorem~\ref{thm:Walsh_non_rigorous} extends the result of Enriques and Kifer \cite{EnriquesKifer}.
There the authors studied the Donsker scaling of symmetric random walks
with the step size $\ve>0$ on a spider graph consisting
of $m$ rays with the common origin $0$. They obtained convergence to a WBM with equal weights $1/m$ for
each ray. If the Markov chain lives on a $\ve$-lattice containing the origin, the results of
\cite{EnriquesKifer}
are covered by
our Theorem~\ref{thm:Walsh_non_rigorous}. For other initial values, an intermediate approximation argument has to be applied, see
Examples \ref{ex:4.3} and \ref{ex:4.4} for more detail. Note that our Theorem~\ref{thm:Walsh_non_rigorous}
is more general since its allows non-uniform
weights of the limit WBM and general step sizes of the Markov chain with different variances on each ray.

Eventually we mention that the case when jumps from the membrane are not integrable and belong to a domain of attraction of an
$\alpha$-stable law with $\alpha\in (0,1)$ are much harder to deal with even if jumps from the membrane are positive. 
Some partial results in this direction were obtained 
by Pilipenko with co-authors in \cite{pilipenko2020limit,iksanov2023functional,pilipenko2023boundary}, 
where a reflected Brownian motion with jump-type exit from 0 
appeared as a limit process. 
The case when jumps outside of the membrane are not in Donsker's setup but belong to a domain of attraction of 
a stable law with parameter in $(1,2)$ leads to a skew stable L\'evy process as it was recently shown in \cite{iksanov2023skew,dong2023discrete}.

\medskip

The structure of the paper is as follows. In Section~\ref{section:Walsh_and_skew}
we recall the definition of the WBM together with its
martingale characterization and its relation to the skew Brownian motion. We formulate main general results
on limit behavior of perturbed random walks in Section~\ref{sec:Main}.
The most important result is Theorem~\ref{thm:Walsh_lim_pertRW}, where
we consider perturbed random walks on $\mathbb Z\times \{1,\ldots,m\}$,
whose Donsker scaling limit is a WBM. The result for
scaling limits of a perturbed random walk with values in $\mathbb Z$
(Theorem~\ref{thm:Skew_lim_pertRW}) and the result for scaling limits
of a random walk on a graph (Theorem~\ref{thm:Walsh_non_rigorous})  follow from Theorem~\ref{thm:Walsh_lim_pertRW}
and the ideas of its proof. We determine the weights of the WBM and the
permeability parameter of the skew Brownian motion explicitly in terms of stationary distributions of some
embedded entrance and exit Markov chains. We also write down a stochastic differential equation for a skew
oscillating Brownian motion, which is a limit of scalings of perturbed random walk (Corollary 3.8).
Section~\ref{s:exa}
contains six illustrative examples. In particular, we show that formulas obtained in Theorems 3.3 and 3.5
include and extend several known results as special cases. The proof of Theorem 3.3 is presented in Section 5. Theorem 3.5
is proven in Section 6. The last Section 7 contains the proofs of Corollary 3.8 and Theorem 3.9.

\medskip

\noindent
\textbf{Acknowledgments:}
A.P.\ acknowledges support by the National Research Foundation
of Ukraine (project 2020.02/0014 ``Asymptotic regimes of perturbed random walks: on the edge of modern and classical
probability'') and thanks the Institute of
Mathematics of the FSU Jena for hospitality.
The authors are grateful
to the anonymous referee for their valuable comments that significantly improved the paper.

% The structure of the paper is as follows. In Section~\ref{section:Walsh_and_skew}
% we recall the definition of the Walsh Brownian motion together with its martingale characterization and its relation to the skew Brownian motion. 
% We formulate main general results on limit behavior of perturbed random walks in Section~\ref{sec:Main}. 
% In Theorem~\ref{thm:Walsh_lim_pertRW} we consider perturbed random walks on $\mbZ\times \{1,\dots,m\}$, whose Donsker scaling 
% limit is a Walsh Brownian motion. A result for scaling limits of a perturbed random walk with values in $\mbZ$ obtained in Theorem~\ref{thm:Skew_lim_pertRW}, is a corollary from Theorem~\ref{thm:Walsh_lim_pertRW}. 
% We determine the weights of the Walsh Brownian motion and the permeability parameter of the skew Brownian motion explicitly 
% in terms of stationary distributions of some embedded entrance and exit Markov chains.
% In Section~\ref{s:exa} contains five illustrative examples. 
% In particular, we show that formulas obtained in Theorems~\ref{thm:Walsh_lim_pertRW} and  \ref{thm:Skew_lim_pertRW} are consistent with already known results. The proof of Theorem~\ref{thm:Walsh_lim_pertRW} is presented 
% in Section~\ref{section2_3_2}.  
% Theorem \ref{thm:Skew_lim_pertRW} is proven in Sections~\ref{section:proof_skewBM}. The last 
% Section~\ref{section:proof_corl_1_2} contains the proofs of Theorem \ref{thm:skew_non_rigorous}, Corollary~\ref{c:sobm}
% and Theorem~\ref{thm:Walsh_non_rigorous}.  

\section{Walsh's and Skew Brownian motions}\label{section:Walsh_and_skew}

The main result of this paper will be expressed in terms of Walsh's 
Brownian motion $W_\mathbf{p}$.
In this section we recall its definition, properties and martingale characterizations.

Originally, Walsh's Brownian motion was introduced in the
work \cite{walsh1978diffusion} as a diffusion on $m$ rays on a two-dimensional plane
with a common origin.
Away from the origin, on each ray, WBM is a standard one-dimensional Brownian motion that
can change the ray upon hitting the origin. In other words, 
the rays are characterized
by $m$ non-negative weights $\mathbf{p}=(p_1,\dots,p_m)$, $p_1+\cdots+p_m=1$. The number $p_i$ is the 
probability to leave the origin into the ray number $i$.

In our work \cite{BPP22}, we proposed a realization of the WBM as an $m$-dimensional process (as was also indicated in Walsh \cite{walsh1978diffusion})
living on positive semi-axes. In comparison to the 2-dimensional construction
with the help of polar coordinates, in this realization one does not have to 
treat the origin as a particular point at which the ray number is not well defined.

Let $m\geq 1$, 
\begin{equation}
\label{e:Em}
E^m=\{x\in\mathbb R^m\colon x_i\geq 0\text{ and } x_ix_j=0,\ i\neq j,\  i,j=1,\dots, m\},
\end{equation}
and $\mathbf{p}=(p_1,\dots,p_m)$ be a vector of weights.
Let us work on the canonical probability space of continuous $\mathbb R^m$-valued functions equipped with the filtration 
$\mathbb F$
generated by the coordinate 
mappings.
We say that 
WBM is a time-homogeneous continuous 
Feller process $W_\mathbf{p}=X=(X_1,\dots,X_m)$ on $E^m$
whose marginal distributions are given by
\begin{equation}
\begin{aligned}
\E_0 \text{e}^{\lambda_1 X_1(t)+\cdots +\lambda_m X_m(t) }&=\sum_{k=1}^m p_k \E_0 \text{e}^{\lambda_k |W(t)|},\\
\E_x \text{e}^{\lambda_1 X_1(t)+\cdots+\lambda_m X_m(t)}&=
\displaystyle 
\E_{x_j}\Big[   \1_{t<\tau_0}\text{e}^{\lambda_j W(t)}\Big]
+ \sum_{k=1}^m  p_k \E_{x_j}\Big[      \1_{t\geq\tau_0}\text{e}^{\lambda_k |W(t)|} \Big]\\
&= \E_{x_j}\Big[\sum_{k=1}^m p_k \text{e}^{\lambda_k |W(t)|}\Big]
 + \E_{x_j}\Big[ \1_{t<\tau_0} \Big( \text{e}^{\lambda_j W(t)}- 
\sum_{k=1}^m p_k\text{e}^{\lambda_k W(t)}\Big)\Big],\\
x&=(0,\dots,x_j,\dots,0),\ x_j>0,\ \lambda\in\mathbb C^m,
\end{aligned}
\end{equation}
where $W$ is a one-dimensional standard Brownian motion and $\tau_0=\inf\{t\geq 0\colon W_t=0\}$.
Note that the last expectation is the expectation of a standard Brownian motion killed upon hitting zero.

In Barlow et al.\ \cite{barlow1989walsh}, the authors provided 
the martingale characterization of the WBM realized as a process on the plane. In terms of 
the $m$-dimensional realization $X$ of the WBM, their characterization takes the following form.

\begin{thm}[Propositon 3.1 and Theorem 3.2 in Barlow et al.\ \cite{barlow1989walsh}]
\label{Walsh1}
Let $X=(X_1(t),\dots,X_m(t))_{t\geq 0}$ be a  
 continuous process. Then $X$ is a WBM
with parameters $p_1,\dots,p_m>0$ if and only if it satisfies 
the following conditions:
\begin{enumerate}
\item $X_i(t)\geq 0$ and $X_i(t)X_j(t)=0$ for all $i\neq j$ and $t\geq 0$;

\item for each $i=1,\dots,m$ the process
\begin{equation}
N_i(t):=(1-p_i) X_i(t)-p_i\sum_{j\neq i}X_j(t) ,\quad t\geq 0,
\end{equation}
is a continuous martingale;  
\item 
for each $i=1,\dots,m$ the process 
\begin{equation}
\label{e:brN}
(N_i(t))^2 -\int_0^t \Big((1-p_i) \1_{X_i(s)>0}-p_i  \1_{X_i(s)= 0}\Big)^2 \,\di s ,\quad t\geq 0,
\end{equation}
is a continuous martingale.  
\end{enumerate}
\end{thm}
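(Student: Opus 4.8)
The statement is the transcription, into the $m$-dimensional realization of this section, of the planar martingale characterization of Barlow, Pitman and Yor, so the plan is to fix the dictionary between the two pictures and then to argue in the $m$-dimensional one; a shortcut is simply to check that this dictionary converts conditions (1)--(3) verbatim into the hypotheses of Proposition~3.1 and Theorem~3.2 of \cite{barlow1989walsh}. I would first record that under the identification $x\in E^m\leftrightarrow$ (radius $|x|=\sum_kx_k$, ray $\theta(x)=i$ on $\{x_i>0\}$), a homeomorphism off the origin, one has $X_i(t)=|X(t)|\,\1_{\theta(X(t))=i}$ and $N_i=(1-p_i)|X|\,\1_{\theta=i}-p_i|X|\,\1_{\theta\ne i}$, and that item~(1) together with $0<p_i<1$ yields the pointwise identities $\sum_{k=1}^mN_k\equiv0$ and $\{N_i=0\}=\{|X|=0\}$.

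For the necessity part I would note that item~(1) is the definition of $E^m$, and that, by the time-homogeneous Markov property of the Feller process $W_{\mathbf p}$, items (2) and (3) reduce to the two identities $\E_yN_i(s)=N_i(y)$ and $\E_y\big[N_i(s)^2-\int_0^s((1-p_i)\1_{X_i(r)>0}-p_i\1_{X_i(r)=0})^2\,\di r\big]=N_i(y)^2$ for all $y\in E^m$ and $s\ge0$. I would obtain these by differentiating the marginal Laplace transforms displayed before the theorem once, resp.\ twice, in $\lambda_k$ at $\lambda=0$, which expresses $\E_yX_k(s)$ and --- via $X_kX_l=0$ for $k\ne l$ --- $\E_yX_k(s)^2$ through moments of a one-dimensional Brownian motion started at $|y|$ and killed, resp.\ reflected, at $0$; the elementary facts $\E_{|y|}[\1_{s<\tau_0}W(s)]=|y|$ and $\E_{|y|}[\1_{s<\tau_0}W(s)^2]=|y|^2+\E_{|y|}(s\wedge\tau_0)$ together with $\sum_kp_k=1$ then finish the computation after short algebra (the case $y=0$ using instead $\E_0X_k(s)=p_k\E_0|W(s)|$).

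For the sufficiency part I would start from the fact that (2) and (3) make each $N_i$ a continuous square-integrable martingale with $\langle N_i\rangle_t=\int_0^t((1-p_i)\1_{X_i(s)>0}-p_i\1_{X_i(s)=0})^2\,\di s$. Since $\{N_i(s)=0\}=\{|X(s)|=0\}$ and $\di\langle N_i\rangle_s=p_i^2\,\di s$ there, the occupation-times formula forces $\int_0^t\1_{|X(s)|=0}\,\di s=0$, so $X$ spends no time at the origin. On the open set $\{X_i>0\}$ the other coordinates vanish and $N_i=(1-p_i)X_i$, so $|X|=X_i$ behaves like a Brownian motion on each excursion interval; combining this with the It\^o expansions of the $X_i^2$ (using $\sum_iX_i^2=|X|^2$) I would show that $|X|^2_t-t$ is a martingale and hence, by a standard Tanaka-type argument, that $|X|$ is a reflecting Brownian motion, so that its straightened-out excursions form a Poisson point process governed by It\^o's excursion measure. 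The remaining --- and, I expect, most delicate --- step is to identify the weights: applying optional sampling to the stopped (bounded) martingales $N_i$ over generic excursion intervals, on which $N_i$ equals $(1-p_i)|X|$ or $-p_i|X|$ according to the ray, should pin the probability that an excursion uses ray $i$ to $p_i$, with independence across excursions coming from the strong Markov property at the excursion endpoints; this is Walsh's construction of $W_{\mathbf p}$ and gives $X\overset{d}{=}W_{\mathbf p}$. The only other point that needs attention is the localization required to turn the local martingales $\int\1_{X_i>0}\,\di N_i$ --- built from the unbounded $N_i$ --- into genuine martingales, for which the moment bounds on $|X|$ afforded by the reflecting-Brownian-motion structure are enough.
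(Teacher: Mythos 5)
Your statement is one the paper does not prove at all: Theorem \ref{Walsh1} is quoted from Barlow, Pitman and Yor \cite{barlow1989walsh} (with exactly the planar-to-$E^m$ dictionary you set up), so there is no internal argument to compare against and your attempt has to stand on its own. The necessity half of your sketch is plausible: by the Markov property the martingale claims do reduce to one-time identities, though note that for condition (3) you need the occupation probabilities $\P_y(X_i(r)>0)$, $\P_y(X_i(r)=0)$ for all $r$, not only the first and second moments obtained by differentiating the Laplace transform, and you must also say a word about square integrability.

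The genuine gap is in the sufficiency half, at the step ``the It\^o expansions of the $X_i^2$ \dots show that $|X|^2_t-t$ is a martingale and hence, by a standard Tanaka-type argument, $|X|$ is a reflecting Brownian motion.'' Two problems. First, at that stage $X_i$ is not known to be a semimartingale — only $N_i$ is — so you cannot expand $X_i^2$ by It\^o; since $\sum_k N_k\equiv 0$, the radius $R=\sum_k X_k$ cannot be recovered linearly from the $N_k$ either, so its semimartingale structure is exactly what has to be established. Second, even granting that $R^2-t$ is a martingale (this can in fact be extracted from a suitable linear combination $\sum_i a_iN_i^2$ using condition (3) and $N_iN_j=-p_jX_i^2-p_iX_j^2+p_ip_jR^2$), the implication ``$R\geq 0$ continuous and $R^2-t$ a martingale $\Rightarrow$ $R$ is reflecting Brownian motion'' is false: take $R(t)=\sqrt{R(0)^2+t}$ with $R(0)>0$ deterministic. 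What is missing is the identification of the quadratic variation of $R^2$ (equivalently of the cross-brackets $\langle N_i,N_j\rangle$, which conditions (1)--(3) do not give directly) or, equivalently, the decomposition of $R$ into a Brownian part plus a local-time term; this is precisely the delicate content of the Barlow--Pitman--Yor proof, and your subsequent excursion/optional-sampling identification of the weights presupposes it (excursion intervals are not stochastic intervals between stopping times, so ``optional sampling over generic excursion intervals'' needs genuine excursion theory). A workable repair along your lines: since $\{N_i>0\}=\{X_i>0\}$ and $N_i=(1-p_i)X_i-p_i\sum_{j\neq i}X_j$, Tanaka's formula gives $X_i=(1-p_i)^{-1}N_i^{+}$ as a semimartingale with martingale part $(1-p_i)^{-1}\int_0^\cdot \1_{X_i(s)>0}\,\di N_i(s)$ (whose bracket is $\int_0^\cdot \1_{X_i(s)>0}\,\di s$) and increasing part proportional to $L^0(N_i)$; showing that these local times, suitably normalized, define one common process $\nu$ reduces conditions (1)--(3) to the characterization of Theorem \ref{thm:Walsh}, which is the route the present paper actually relies on.
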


Note that since the product of the indicator functions in \eqref{e:brN} is identically zero, we have
\begin{equation}
\begin{aligned}
\langle N_i\rangle_t=
\int_0^t \Big((1-p_i) \1_{X_i(s)>0}-p_i  \1_{X_i(s)= 0}\Big)^2 \,\di s
=\int_0^t \Big( (1-p_i)^2 \1_{X_i(s)>0}+p_i^2 \1_{X_i(s)= 0}\Big)\,\di s.  
\end{aligned}    
\end{equation}
Furthermore, it is clear, see Lemma 2.2 in Barlow et al.\ \cite{barlow1989walsh}, that the radial process
\begin{equation}
R(t):=\sum_{i=1}^m X_i(t)=\max_{1\leq i\leq m} X_i(t)
\end{equation}
is a reflecting Brownian motion, and hence it has a symmetric local time at $0$ defined by:
\begin{equation}
\label{e:loctime}
L^X_0(t):= L^R_0(t):=\lim_{\ve\to 0+}\frac{1}{2\ve}\int_0^t \1_{\max_{1\leq i\leq m} X_i(s) \leq \ve} \,\di s.
\end{equation}
In our paper \cite{BPP22} we gave an equivalent martingale characterization of the WBM that is more convenient for the proof of the convergence theorem of this paper.

\begin{thm}[Theorem 3.2 in \cite{BPP22}]
\label{thm:Walsh}
Let $X=(X_1(t),\dots,X_m(t))_{t\geq 0}$ and $\nu=(\nu(t))_{t\geq 0}$ be 
 continuous processes.
Then $X$ is a WBM with parameters $p_1,\dots,p_m>0$, 
 and $\nu$ is the local time of $X$ at $0$
 if and only if they satisfy   
the following conditions:
\begin{enumerate}
\item $X_i(t)\geq 0$ and $X_i(t)X_j(t)=0$ for all $i\neq j$, $t\geq 0$;

\item $\nu(0)=0$, $\nu$ is non-decreasing a.s., $\int_0^\infty \1_{X(s)\neq 0}\, \di \nu(s)=0$ a.s.;

\item the processes $M_1,\dots, M_m$ defined by
\begin{equation}
\label{e:Mnu}
M_i(t):=X_i(t)-p_i \nu(t),\quad t\geq 0,
\end{equation}
are continuous square integrable martingales 
with the predictable quadratic variations
\begin{equation}
\label{e:brM}
\langle M_i\rangle_t=\int_0^t \1_{X_i(s)>0} \, \di s.
\end{equation}
\item $\int_0^\infty \1_{X(s)=0} \,\di s=0$ a.s.
\end{enumerate}
\end{thm}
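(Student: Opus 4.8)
The plan is to deduce Theorem~\ref{thm:Walsh} from the Barlow--Pitman--Yor characterization recalled in Theorem~\ref{Walsh1}, using the radial process $R(t):=\sum_{i=1}^m X_i(t)=\max_i X_i(t)$ as the bridge between the two formulations. The starting observation is that, under condition~(1), the Barlow et al.\ martingale can be rewritten as $N_i(t)=(1-p_i)X_i(t)-p_i\sum_{j\neq i}X_j(t)=X_i(t)-p_iR(t)$, so that once $R$ is known to be a reflecting Brownian motion with local time $\nu$ and Skorokhod decomposition $R=\beta+\nu$, the candidate martingales $M_i(t)=X_i(t)-p_i\nu(t)$ and the $N_i$ are linked by $M_i=N_i+p_i\beta$.

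\emph{Sufficiency: (1)--(4) $\Rightarrow$ WBM.} Assume (1)--(4). Summing the defining relation \eqref{e:Mnu} over $i$ and using $\sum_ip_i=1$ shows $\widetilde M:=\sum_iM_i=R-\nu$ is a continuous local martingale. To compute $\langle\widetilde M\rangle$ I would first prove $\langle M_j,M_k\rangle\equiv0$ for $j\neq k$: Itô's product rule applied to the identically vanishing process $X_jX_k$ (condition~(1)), together with the fact that $\di\nu$ is carried by $\{X=0\}$ (condition~(2)), yields $\langle M_j,M_k\rangle_t=-\int_0^tX_j\,\di M_k-\int_0^tX_k\,\di M_j$, a continuous finite-variation local martingale null at $0$, hence identically zero. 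Combining this with \eqref{e:brM} and the identity $\sum_i\1_{X_i(s)>0}=\1_{X(s)\neq0}$ gives $\langle\widetilde M\rangle_t=\int_0^t\1_{X(s)\neq0}\,\di s=t$ by condition~(4), so $\widetilde M$ is a Brownian motion by Lévy's theorem. Then $R=\widetilde M+\nu$ with $R\geq0$, $\nu$ nondecreasing, $\nu(0)=0$ and $\di\nu$ supported on $\{R=0\}$, so Skorokhod's lemma identifies $R$ as a reflecting Brownian motion and $\nu$ as its symmetric local time at $0$, i.e.\ \eqref{e:loctime}. Finally $N_i=M_i-p_i\widetilde M$ is a continuous martingale (this is Barlow condition~2), and expanding $\langle N_i\rangle_t=\langle M_i\rangle_t-2p_i\langle M_i,\widetilde M\rangle_t+p_i^2\langle\widetilde M\rangle_t$ with $\langle M_i,\widetilde M\rangle=\langle M_i\rangle=\int_0^t\1_{X_i>0}\,\di s$ and $\1_{X_i>0}+\1_{X_i=0}=1$ reduces it to $(1-p_i)^2\int_0^t\1_{X_i(s)>0}\,\di s+p_i^2\int_0^t\1_{X_i(s)=0}\,\di s$, which is Barlow condition~3. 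Theorem~\ref{Walsh1} now says $X$ is a WBM with weights $p_1,\dots,p_m$, and $\nu$ has already been identified with $L^X_0$.

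\emph{Necessity: WBM $\Rightarrow$ (1)--(4).} Let $X$ be a WBM with weights $p_i>0$ and $\nu=L^X_0$. Condition~(1) is built into $E^m$; by Lemma~2.2 of \cite{barlow1989walsh} $R$ is a reflecting Brownian motion, so $\nu=L^R_0$ yields (2) (its support is $\{R=0\}=\{X=0\}$) and (4) ($R$ spends zero Lebesgue time at $0$), and $\beta:=R-\nu$ is a Brownian motion by Tanaka's formula. Then $M_i=X_i-p_i\nu=N_i+p_i\beta$ is a continuous martingale, square integrable since $0\le X_i\le R$ and $\nu$ have finite second moments on compacts and $N_i$ is a martingale by Theorem~\ref{Walsh1}. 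For \eqref{e:brM} note $\langle M_i\rangle=\langle X_i\rangle$; since $X_i$ coincides with $R$ on the open set $\{X_i>0\}$, so that $\di\langle X_i\rangle=\di\langle R\rangle=\di s$ there, while the occupation times formula gives $\int_0^t\1_{X_i(s)=0}\,\di\langle X_i\rangle_s=\int_{\mathbb R}\1_{\{0\}}(a)L^a_t(X_i)\,\di a=0$, we obtain $\langle M_i\rangle_t=\langle X_i\rangle_t=\int_0^t\1_{X_i(s)>0}\,\di s$.

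\emph{Expected main obstacle.} The stochastic-calculus manipulations above are routine; the two points requiring care are (i) the vanishing of the cross-variations $\langle M_j,M_k\rangle$, where the ``finite-variation local martingale'' argument must be made fully rigorous (integrability of the stochastic integrals, or an explicit localisation), and (ii) the precise matching of $\nu$ with the \emph{symmetric} local time in \eqref{e:loctime}, which relies on the classical but slightly delicate fact that the reflection term in the Skorokhod decomposition of a reflecting Brownian motion equals $\lim_{\ve\to0+}\frac{1}{2\ve}\int_0^t\1_{\{R_s\le\ve\}}\,\di s$.
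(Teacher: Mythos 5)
There is nothing to compare against inside this paper: Theorem~\ref{thm:Walsh} is stated here as an imported result (Theorem~3.2 of \cite{BPP22}) and no proof of it is given in the present text, so I can only assess your argument on its own terms. On those terms it is correct and follows the natural route, namely deducing the statement from the Barlow--Pitman--Yor characterization (Theorem~\ref{Walsh1}) via the radial process $R=\sum_i X_i$ and the identity $N_i=X_i-p_iR=M_i-p_i\widetilde M$ with $\widetilde M=\sum_j M_j=R-\nu$. The key computations check out: the product rule on $X_jX_k\equiv 0$ together with $\int \1_{X\neq 0}\,\di\nu=0$ does give $\langle M_j,M_k\rangle\equiv 0$ (a continuous finite-variation local martingale null at $0$ vanishes, so no integrability issue arises there); hence $\langle\widetilde M\rangle_t=\int_0^t\1_{X(s)\neq 0}\,\di s=t$ by condition~(4), L\'evy's theorem and Skorokhod's lemma identify $R$ as reflecting Brownian motion with reflection term $\nu$, and the bracket algebra $(1-2p_i)+p_i^2=(1-p_i)^2$ recovers exactly the Barlow condition~3 integrand. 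The converse direction is also sound, with the two points you flag being precisely the ones that need (standard) care: the identification of the Skorokhod reflection term with the symmetric local time in \eqref{e:loctime}, and the justification that $\1_{X_i>0}\,\di\langle X_i\rangle_s=\1_{X_i>0}\,\di s$ while $\int_0^t\1_{X_i=0}\,\di\langle X_i\rangle_s=0$; the latter is cleanest via the occupation-times formula applied to $X_i-R$ (which vanishes on $\{X_i>0\}$) plus Kunita--Watanabe, rather than the slightly informal ``coincides on an open set'' phrasing, and both are routine facts from \cite{barlow1989walsh} and classical reflection theory. So the proposal would yield a complete proof of Theorem~\ref{thm:Walsh} equivalent in spirit to the one the authors delegate to \cite{BPP22}; what it buys is a self-contained reduction of the ``local-time'' characterization used in this paper to the older ``$N_i$'' characterization, at the cost of invoking Theorem~\ref{Walsh1} as a black box.
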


In his paper \cite{walsh1978diffusion}, Walsh addressed the properties of the 
skew Brownian motion originally constructed by It\^o and McKean in
\cite{ItoMcKean-65}, Problem 1, Section 4.2. A SBM with parameter $\gamma \in[-1,1]$
is a diffusion that behaves like a standard Brownian motion on the half-lines 
$(-\infty,0)$ and $(0,\infty)$ and upon hitting zero continues to the positive half-line with 
probability $p_+=(\gamma+1)/2$ and to the negative half line with probability  $p_-=(\gamma+1)/2$.  
In \cite{BPP22}, it was shown that a SBM can be obtained from a WBM by summing together the motions on different rays.

\begin{corl}[\cite{BPP22}, Corollary 3.4]
\label{thm:Walsh-skew}
Let $X=(X_1(t),\dots,X_m(t))_{t\geq 0}$ be a WBM with parameters $p_1,\dots,p_m>0$ and let $I\subseteq\{1,\dots, m\}$. Set
\begin{equation}
\gamma:=\sum_{i\in I} p_i - \sum_{j\in I^c} p_j= 
2\sum_{i\in I} p_i-1.
\end{equation}
Then $\gamma\in [-1,1]$ and the process
\begin{equation}
W_\gamma^{\emph{\text{skew}}}(t):=\sum_{i\in I} X_i(t)- \sum_{j\in I^c} X_j(t),\quad t\geq 0,
\end{equation}
is a skew Brownian motion with the parameter $\gamma$.
\end{corl}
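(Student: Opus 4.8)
The plan is to deduce the statement from the martingale characterization of the WBM in Theorem~\ref{thm:Walsh} together with the It\^o--McKean--Harrison--Shepp characterization of the skew Brownian motion as the unique (in law) continuous process $Y$ satisfying $Y(t)=Y(0)+B(t)+\gamma L^Y(t)$, where $B$ is a standard Brownian motion and $L^Y$ is the symmetric local time of $Y$ at $0$ (see \cite{HShepp-81}). Write $\ve_k:=\1_{k\in I}-\1_{k\in I^c}\in\{-1,+1\}$ and $Y:=W^{\text{skew}}_\gamma=\sum_{k=1}^m\ve_k X_k$. Let $\nu$ be the local time of $X$ at $0$ furnished by Theorem~\ref{thm:Walsh} and $M_k(t):=X_k(t)-p_k\nu(t)$ the associated martingales. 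The support condition $X_iX_j\equiv 0$ gives two pathwise identities: $|Y(t)|=\sum_k X_k(t)=R(t)$, the radial process, and $\{t\colon Y(t)=0\}=\{t\colon X(t)=0\}$; the latter set is Lebesgue-null by condition (4) of Theorem~\ref{thm:Walsh}, and $\nu$ grows only on it by condition (2). Summing the defining relations of the $M_k$ with signs $\ve_k$ yields
\be
Y(t)=M(t)+\Big(\sum_{i\in I}p_i-\sum_{j\in I^c}p_j\Big)\nu(t)=M(t)+\gamma\,\nu(t),\qquad M:=\sum_{k=1}^m\ve_k M_k.
\ee

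The next step is to show that $M$ is a standard Brownian motion by L\'evy's characterization. It is a continuous martingale, so it suffices to compute $\langle M\rangle=\sum_{k,l}\ve_k\ve_l\langle M_k,M_l\rangle$. For $i\neq j$, integration by parts in $0\equiv X_i(t)X_j(t)$, using $\di X_k=\di M_k+p_k\,\di\nu$, the fact that $\int_0^\cdot X_k\,\di\nu=0$ (the integrator charges only $\{X=0\}$), and $\langle X_i,X_j\rangle=\langle M_i,M_j\rangle$, gives $\langle M_i,M_j\rangle_t=-\int_0^t X_i\,\di M_j-\int_0^t X_j\,\di M_i$; the right-hand side is a continuous local martingale of bounded variation, hence identically zero. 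Therefore $\langle M\rangle_t=\sum_k\langle M_k\rangle_t=\sum_k\int_0^t\1_{X_k(s)>0}\,\di s=\int_0^t\1_{X(s)\neq 0}\,\di s=t$, using condition (4) again, so $M$ is a standard Brownian motion.

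It remains to identify $\nu$ with the symmetric local time $L^Y$ of $Y$ at $0$. Tanaka's formula applied to $Y=M+\gamma\nu$ gives $|Y(t)|=|Y(0)|+\int_0^t\sgn(Y(s))\,\di M(s)+L^Y(t)$, since $\int_0^t\sgn(Y(s))\,\di\nu(s)=0$. The local martingale $B:=\int_0^\cdot\sgn(Y(s))\,\di M(s)$ has quadratic variation $\int_0^t\1_{Y(s)\neq 0}\,\di s=t$ and is thus a Brownian motion, so $R=|Y|$ is exhibited as $|Y(0)|+B+L^Y$ with $L^Y$ continuous, non-decreasing, null at $0$, and flat off $\{R=0\}$. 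But $R$ is a reflecting Brownian motion, so it also admits the Skorokhod decomposition $R=R(0)+\beta+\nu$ with $\nu=L^R$ its local time at $0$; uniqueness of the (continuous) semimartingale decomposition of $R$ forces $B=\beta$ and $L^Y=\nu$. Hence $Y(t)=Y(0)+M(t)+\gamma L^Y(t)$ with $M$ a standard Brownian motion, and by the Harrison--Shepp uniqueness theorem $Y$ is a skew Brownian motion with parameter $\gamma$; the bound $\gamma=2\sum_{i\in I}p_i-1\in[-1,1]$ is immediate. The main obstacle is precisely this last identification: one must verify that the local time supplied by Theorem~\ref{thm:Walsh} coincides with the \emph{symmetric} local time of $Y$ that appears in the skew Brownian SDE, and then invoke pathwise uniqueness for that SDE to turn the relation into the conclusion; the orthogonality $\langle M_i,M_j\rangle\equiv 0$ is the other point requiring a short argument rather than a one-line observation.
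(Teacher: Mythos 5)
Your proposal is correct. Note that the paper itself gives no proof of this corollary (it is imported from \cite{BPP22}), so I can only judge the argument on its merits: the route you take -- extract $\nu$ and the martingales $M_k$ from Theorem~\ref{thm:Walsh}, sum with signs $\ve_k$ to get $Y=Y(0)+M+\gamma\nu$, prove $\langle M_i,M_j\rangle\equiv 0$ by integrating by parts in $X_iX_j\equiv 0$ and observing that a finite-variation continuous local martingale vanishes, conclude via L\'evy that $M-M(0)$ is a standard Brownian motion, and then identify $\nu$ with the symmetric local time $L_0^Y$ by comparing the Tanaka decomposition of $|Y|$ with the decomposition of $R$ before invoking Harrison--Shepp uniqueness -- is exactly the natural derivation from the martingale characterization, and every step is sound. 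The two points you flag as delicate are indeed the only ones needing care, and you handle both: the orthogonality argument is complete (using $\int_0^\cdot X_k\,\di\nu=0$ and $\langle X_i,X_j\rangle=\langle M_i,M_j\rangle$), and the identification $L_0^Y=\nu$ follows from uniqueness of the continuous semimartingale decomposition. One small streamlining: you do not need to appeal separately to the fact that $R$ is a reflecting Brownian motion with Skorokhod decomposition; summing condition (3) of Theorem~\ref{thm:Walsh} over $k$ and using $\sum_k p_k=1$ gives directly $R=R(0)+\sum_k(M_k-M_k(0))+\nu$, which is all the second decomposition you need for the comparison. Also, strictly speaking the Harrison--Shepp uniqueness covers $|\gamma|\le 1$, with $\gamma=\pm 1$ giving reflecting Brownian motion, which is consistent with the boundary cases $I=\emptyset$ or $I=\{1,\dots,m\}$.
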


Recall  that due to \cite{HShepp-81}, the skew Brownian motion is the unique strong solution of the stochastic differential equation
\begin{equation}
\label{e:SDEskew}
\di W_\gamma^\text{skew}(t)=\di W(t) + \gamma\,\di  L_0^{W^\text{skew}_\gamma}(t),
\end{equation}
where $L_0^{W^\text{skew}_\gamma}(\cdot)$ is the symmetric semimartingale local time at 0. Since  
$\langle W_\gamma^\text{skew} \rangle_t=t$, the symmetric semimartingale local time coincides with the symmetric local time.

In the sequel, we will use notation $W_\mathbf{p}(\cdot,x)$ and 
$W_\gamma^{{\text{skew}}}(\cdot,x)$ to denote
WBM and SBM starting at a point $x$.

\section{Convergence to WBM and SMB \label{sec:Main}}

One of the  main results of this paper is related to
a Markov chain $\cX=\big((R(k),l(k))\big)_{k\geq 0}$, on the state space $\cZ^m:=\mbZ\times \{1,\dots,m\}$, $m\geq 1$, that 
is defined by the following set of assumptions.

\medskip

\noindent
\textbf{A}$_1$. For all  $(x,i)\in  \cN^m:=\mbN\times\{1,\dots,m\}$
\ba
\Pb\Big(\cX(1)=(x+y,i) \Big|  \cX(0)=(x,i)\Big)=\Pb( \xi^{(i)}=y),
\ea
where integer-valued random variables $\xi^{(1)},\dots,\xi^{(m)}$ satisfy 
\ba
\E \xi^{(i)}=0,\quad  
v_i^2:=\Var\, \xi^{(i)}\in (0,\infty),\quad  1\leq i\leq m,
\ea 
and each  $\xi^{(i)}$, $i=1,\dots,m$, generates a 1-arithmetic random walk,
i.e., $\mathbb Z$ is the minimal lattice containing
$\operatorname{supp}(\xi^{(i)})$.

 \noindent
\textbf{A}$_2$.  For all $x \leq 0$ and $1\leq i\leq m$
\ba
\Pb(R(1)\in \mbN\  | \  \cX(0)=(x,i))=1.
\ea

\noindent
\textbf{A}$_3$. There is $C>0$ such that for all $x \leq 0$ and $1\leq i\leq m$
\ba
\E\Big[  R(1)  \Big|  \cX(0)=(x,i)\Big]\leq C(1+|x|).
\ea
\noindent
\textbf{A}$_4$.  The states $\cN^m$ of the Markov chain $\cX$ communicate. %are connected.

\medskip

We will  call $R(\cdot)$
the radius and $l(\cdot)$ the label of $\cX$. Note that the radius $R$ may attain non-positive values. The qualitative behaviour of 
$\cX$ is as follows.

Assumption \textbf{A}$_1$ says that for the ``positive'' initial point $\cX(0)=(R(0),l(0))=(x,i)$, $x>0$, the
radius $R$
behaves like a zero mean square integrable random walk with the increments having distribution
$\xi^{(i)}$ until
it hits $\{\ldots , -2, -1, 0\}$.
Therefore the first exit time
\ba
\label{e:ss}
\sigma&=\inf\{k\geq 0\colon R(k)\leq 0\}
\ea
is finite a.s.
Note that the label $l$ does not change up to the stopping time $\sigma$,
\ba
l(0)=l(1)=\cdots=l(\sigma)=i.
\ea
Upon reaching a non-positive value at time $\sigma$, the Markov chain $\cX$ 
for sure jumps to a ``positive'' value at one step. The label $l$ can change, too:
\ba
R(\sigma+1)>0, \quad l(\sigma+1)\in\{1,\dots,m\}.
\ea
The technical Assumption \textbf{A}$_3$ controls the size of the jump of the radius at time $\sigma+1$.

Let $\{\sigma_\iota\}_{\iota\geq 1}$ denote the sequence of exit times of $R$ from $\mbN$, 
that is  
\ba
\label{e:sigma}
\sigma=\sigma_1&=\inf\{k\geq 0\colon R(k)\leq 0\},\\
\sigma_{\iota+1}&=\inf\{k>\sigma_\iota\colon  R(k)\leq 0\}.
\ea
Let also
\be
\label{e:tau}
\tau_\iota:=\sigma_\iota+1,\quad \iota\geq 1,
\ee
be the entrance times to $\mbN$.

With the sequences $\{\sigma_\iota\}_{\iota\geq 1}$ and $\{\tau_\iota\}_{\iota\geq 1}$
we associate embedded \emph{exit} and \emph{entrance} Markov chains taking values in 
$\{\dots, -2,-1,0\}\times \{1,\dots,m\}$ and $\cN^m$ respectively and defined as
\ba
\label{e:EE}
\cX_\text{exit}(\iota)&:= \cX(\sigma_\iota), \\
\cX_\text{entrance}(\iota)&:= \cX(\tau_\iota), \quad \iota\geq 1,
\ea
Assumption {\bf A}$_4$ yields the following statement.

\begin{lem}
\label{lem:exit_entrance_stationary}
The Markov chains $\cX_\emph{exit} $ and $\cX_\emph{entrance}$ have unique stationary distributions $\pi_\emph{exit}$ and $\pi_\emph{entrance}$
respectively.
\end{lem}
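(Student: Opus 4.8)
The plan is to show that each of $\cX_\text{exit}$ and $\cX_\text{entrance}$ is an irreducible, positive recurrent Markov chain on a countable state space; existence and uniqueness of a stationary distribution then follow from standard Markov chain theory. First I would check that these really are time-homogeneous Markov chains. By \textbf{A}$_1$, while the radius stays in $\mbN$ it evolves as $R(0)$ plus a mean-zero random walk with increments $\xi^{(l)}$, the label $l$ being frozen; such a walk is recurrent, so it hits $\{\dots,-1,0\}$ a.s., and by \textbf{A}$_2$ it re-enters $\mbN$ at the next step. Induction gives $\sigma_\iota,\tau_\iota<\infty$ a.s.\ for all $\iota$, and applying the strong Markov property of $\cX$ at the stopping times $\sigma_\iota$ and $\tau_\iota$ shows that $\cX_\text{exit}$ and $\cX_\text{entrance}$ are time-homogeneous Markov chains. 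Next I would reduce to one of them: $\cX_\text{entrance}(\iota)$ is obtained from $\cX_\text{exit}(\iota)$ by a single step of $\cX$ (a stochastic kernel $Q_1$, by \textbf{A}$_2$), and $\cX_\text{exit}(\iota+1)$ from $\cX_\text{entrance}(\iota)$ by running $\cX$ to its next exit (a stochastic kernel $Q_2$, by \textbf{A}$_1$). Thus $P_\text{entrance}=Q_2Q_1$, $P_\text{exit}=Q_1Q_2$, and $\mu\mapsto\mu Q_2$ is a bijection between stationary distributions of $\cX_\text{entrance}$ and of $\cX_\text{exit}$, with inverse $\nu\mapsto\nu Q_1$. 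Hence it suffices to treat $\cX_\text{entrance}$, which takes values in $\cN^m=\mbN\times\{1,\dots,m\}$.

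For irreducibility I would use \textbf{A}$_4$ together with \textbf{A}$_1$--\textbf{A}$_2$. The key observation is that, from any $(y,l)\in\cN^m$, the excursion of the radius in $\mbN$ can, with positive probability, reach any prescribed level before exiting (the $\xi^{(l)}$-walk is $1$-arithmetic and mean-zero, hence recurrent and irreducible on $\mbZ$) and then cross $0$ with any admissible $\xi^{(l)}$-jump, so the set of exit states attainable from $(y,l)$ depends only on $l$; composing with the history-independent re-entry step $Q_1$ and invoking \textbf{A}$_4$, which forces all labels to be mutually accessible, one concludes that $\cX_\text{entrance}$ is irreducible on its state space (the set $\mathcal E\subseteq\cN^m$ of entrance states, which is moreover closed and reached in one step from every state). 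This step is routine but requires a careful bookkeeping of excursions.

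The substantive part is positive recurrence, which I would obtain from Foster's drift criterion with the Lyapunov function $V(x,i):=x$ on $\cN^m$. If $\cX_\text{entrance}(\iota)=(x,i)$, then by \textbf{A}$_1$ the radius equals $x$ plus a mean-zero, variance-$v_i^2$ random walk up to time $\sigma_{\iota+1}$, so $R(\sigma_{\iota+1})=-O_x^{(i)}$, where $O_x^{(i)}\ge0$ is the overshoot below $0$ of that walk started from $x$; by \textbf{A}$_3$ and the strong Markov property (at $\sigma_{\iota+1}$, w.r.t.\ the natural filtration $\cF$ of $\cX$), $\E[R(\tau_{\iota+1})\mid\cF_{\sigma_{\iota+1}}]\le C(1+O_x^{(i)})$, whence
\ba
\E\big[V(\cX_\text{entrance}(\iota+1))\,\big|\,\cX_\text{entrance}(\iota)=(x,i)\big]\le C\big(1+\E\,O_x^{(i)}\big).
\ea
The crux is the estimate $\E\,O_x^{(i)}=o(x)$ as $x\to\infty$. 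Since $\E\xi^{(i)}=0$ and $\Var\xi^{(i)}<\infty$, the first descending ladder height $H^{(i)}$ of the $\xi^{(i)}$-walk has finite mean, and $O_x^{(i)}$ is the overshoot over level $x$ of the renewal process with step $H^{(i)}$; bounding $O_x^{(i)}$ by the ladder increment that crosses level $x$ gives $\E\,O_x^{(i)}\le c_i\,\E[H^{(i)}\min(x,H^{(i)})]$ for a finite constant $c_i$, which is finite for each $x$ and is $o(x)$ by dominated convergence (the integrand divided by $x$ is bounded by $H^{(i)}$ and tends to $0$). Consequently there is $x_0$ with $C(1+\E\,O_x^{(i)})\le x-1$ for all $x>x_0$ and all $i$; with $F=\{1,\dots,x_0\}\times\{1,\dots,m\}$ finite and $\ve=1$, Foster's criterion holds and $\cX_\text{entrance}$ is positive recurrent.

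Being irreducible and positive recurrent on a countable state space, $\cX_\text{entrance}$ has a unique stationary distribution $\pi_\text{entrance}$; by the bijection of the first paragraph, $\pi_\text{exit}:=\pi_\text{entrance}Q_2$ is then the unique stationary distribution of $\cX_\text{exit}$. I expect the main obstacle to be the overshoot bound $\E\,O_x^{(i)}=o(x)$ --- equivalently, the strict negativity of the entrance chain's drift at large radius --- whose proof rests on the finiteness of the variances in \textbf{A}$_1$; the reduction to a single chain, the irreducibility from \textbf{A}$_4$, and the passage from the drift inequality to existence and uniqueness are all standard.
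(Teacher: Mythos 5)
Your proposal is correct, but it reaches positive recurrence by a genuinely different mechanism than the paper. The paper invokes the criterion ``unique stationary law $\Leftrightarrow$ exactly one positive recurrent class of essential communicating states'', gets communication of essential states from $\mathbf{A}_4$, and excludes null recurrence via a uniform Doeblin-type minorization: using the renewal theorem for the strictly ascending ladder heights $H^{(i)}$ (finite mean by $\mathbf{A}_1$), it shows $\inf_{x}\Pb(\cX(\sigma)=(0,i)\mid\cX(0)=(x,i))>0$, i.e.\ the exit chain hits the finite set $\{0\}\times\{1,\dots,m\}$ at every step with probability bounded below; the entrance chain is then handled by pushing $\pi_\text{exit}$ through one step of $\cX$. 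You instead work with the entrance chain and apply Foster's drift criterion with $V(x,i)=x$, the key estimate being $\E\,O_x^{(i)}=o(x)$ for the overshoot, obtained by bounding the overshoot by the crossing ladder increment (a Lorden-type bound $\E\,O_x^{(i)}\le c\,\E[H^{(i)}\min(x,H^{(i)})]$, valid since the renewal mass at each site is at most $1$); combined with $\mathbf{A}_3$ this gives negative drift outside a finite set. Both routes rest on $\E H^{(i)}<\infty$, and your exit/entrance reduction via the kernel factorization $P_\text{entrance}=Q_2Q_1$, $P_\text{exit}=Q_1Q_2$ is a slightly more explicit (and two-way) version of the paper's one-line transfer. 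Your irreducibility sketch (labels change only at entrance steps, attainable exit sets depend only on the label, $\mathbf{A}_4$ links the labels) is at about the same level of detail as the paper's bare assertion that $\mathbf{A}_4$ makes the essential states communicate, so I do not count it as a gap. Worth noting: your overshoot estimate is essentially the bound the paper defers to the next lemma (the Vysotsky-type inequality $\E_{x,i}|R(\sigma_1)|\le K(\alpha)+\alpha x$ used to prove integrability of the stationary laws), so your argument nearly delivers that lemma as a by-product; conversely, the paper's minorization \eqref{eq:reccurence469} is reused later in the proof of Theorem \ref{thm:Walsh_lim_pertRW}, which your drift argument would not replace.
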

\begin{proof}
1. First we prove the statement for the Markov chain $\cX_\text{exit}$.
It is well known that a Markov chain with
countably many states has a unique stationary distribution if and only if the set of states contains precisely
one positive recurrent class of essential communicating states, see Theorem 2 in Section 6 of Chapter 8 in
Shiryaev \cite{Shiryaev-2-19}.

It follows from Assumption $\mathbf{A}_4$ that all essential states of $\cX_\text{exit}$
communicate. All of them are either null
recurrent or positive recurrent. To exclude the case of null recurrence, it is sufficient to show that
there is $c>0$ such that for all $(x,i)\in \{\dots,-2, -1,0\}\times \{1,\dots, m\}$
\be
\label{eq:reccurence1}
\Pb\Big(\cX_\text{exit}(\iota+1)\in \{ 0\}\times\{1,\dots, m\}\,  \Big|\,  \cX_\text{exit}(\iota)=(x,i)\Big)\geq c.
\ee
To demonstrate this, it is sufficient to verify that there is $c>0$ such that for all $(x,i)\in \cN^m$
\be
\label{eq:reccurence469}
\Pb\Big(\cX(\sigma)=(0,i) \, \Big| \, \cX(0)=(x,i)\Big)\geq c.
\ee
We borrow the reasoning from Section 2 in Vysotsky \cite{vysotsky2015limit} and use facts
from the renewal theory of random walks.

Let $i$ be fixed. Consider a random walk
\ba
S_{-\xi^{(i)}}(0)&=0,\\
S_{-\xi^{(i)}}(k)&= -\xi^{(i)}_1 -\cdots -\xi^{(i)}_k,\quad k\in\mathbb N,
\ea
and let for $z\in\mathbb Z$
\ba
\chi_z=\inf\{k\geq 0\colon S_{-\xi^{(i)}}(k)>z\}.
\ea
Then for all $x\in\mathbb N$ and
  $y\in \mbN\cup\{0\}$
\ba
\label{e:B}
 \Pb\Big(\cX(\sigma)=(-y,i) \Big| \cX(0)=(x,i)\Big)=
 \Pb\Big( S_{-\xi^{(i)}}(\chi_{x-1})-(x-1) =y+1 \Big).
\ea
The value $S_{-\xi^{(i)}}(\chi_{x-1})-(x-1)$
is the overshoot of the random walk $S_{-\xi^{(i)}}$ above the level $x-1$.

Let $H^{(i)}$ be the strictly ascending ladder height of the random walk $S_{-\xi^{(i)}}$.

Observe that the overshoot of the random walk  $S_{-\xi^{(i)}}$ above the level $x-1$
equals in distribution to the
the overshoot of the random walk $S_{H^{(i)}}$ above the level $x- 1$.
From Assumption $\mathbf{A}_1$ we have $\E H^{(i)} <\infty$,
see T1 in Section 18 in Spitzer \cite{spitzer2013principles}.

Therefore by formula (6.7) from Theorem 6.2, \S2.6, in Gut \cite{Gut-09} and equality \eqref{e:B}
above we get that for
any $y \in\mathbb N \cup \{0\}$
\ba
% \label{e:B1}
\lim_{x\to +\infty} \Pb\Big(\cX(\sigma)=(-y,i) \Big|  \cX(0)=(x,i)\Big)= \frac{\Pb(H^{(i)}\geq y+1)}{\E H^{(i)}},
\ea
and in particular for $y=0$
\ba\label{e:B1}
\lim_{x\to +\infty} \Pb\Big(\cX(\sigma)=(0,i) \Big|  \cX(0)=(x,i)\Big)= \frac{ 1}{\E H^{(i)}}>0.
\ea
Since the
probabilities $\Pb(\cX(\sigma)=(0,i) | \cX(0)=(x,i))$ are obviously positive for any $x\in\mbN$, equation \eqref{e:B1} implies that
\ba
\min_{i=1,\dots,m} \inf_{x\in\mathbb N} \Pb\Big(\cX(\sigma)=(0,i) \Big| \cX(0)=(x,i)\Big)>0,
\ea
so that \eqref{eq:reccurence469} holds true.

% \newpage

2. Existence of the stationary distribution for the Markov chain $\cX_\text{entrance}$ follows from the observation that
\ba
\cX_\text{entrance}(\iota)=\cX(\tau_\iota)=\cX(\sigma_\iota+1),\quad \cX(\sigma_\iota)=\cX_\text{exit}(\iota)
\ea
and the existence of the stationary distribution for $\cX_\text{exit}$.
\end{proof}

Note that the stationary distributions  $\pi_\text{exit}$ and $\pi_\text{entrance}$
are related as follows:
\ba
\label{eq:exit-entrance}
\Pb_{\pi_\text{entrance}}(\cX(\sigma)\in A)& = \Pb_{\pi_\text{exit}}(\cX(0)\in A), \\ 
\Pb_{\pi_\text{exit}}(\cX(1)\in A)         & = \Pb_{\pi_\text{entrance}}(\cX(0)\in A), \quad A\subseteq \mbZ.
\ea
see a 
detailed investigation of exit and entrance chains constructed for the general Markov chain in Mijatovi\'c and Vysotsky \cite{mijatovic2018stationary}.
\begin{lem}
\be
\label{eq:fin_exit}
{\E}_{\pi_\emph{\text{exit}}}|R(0)|=\sum_{x\geq 0}x\, \pi_\emph{\text{exit}}(-x) <\infty,  
\ee
\be 
\label{eq:fin_entrance}
{\E}_{\pi_\emph{\text{entrance}}}R(0)=\sum_{x\geq 1}x\, \pi_\emph{\text{entrance}}(x)<\infty.  
\ee
\end{lem}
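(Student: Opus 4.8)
The plan is to derive both estimates from a single geometric drift (Foster--Lyapunov) inequality for the exit Markov chain $\cX_\text{exit}$ with the linear Lyapunov function $V(x,i):=|x|$. Once \eqref{eq:fin_exit} is established, the bound \eqref{eq:fin_entrance} is immediate: by the second identity in \eqref{eq:exit-entrance}, if $\cX(0)\sim\pi_\text{exit}$ then $\cX(1)\sim\pi_\text{entrance}$, and since $\cX(0)$ then lies in $\{\dots,-2,-1,0\}\times\{1,\dots,m\}$, Assumption $\mathbf{A}_3$ gives
\[
\E_{\pi_\text{entrance}}R(0)=\E_{\pi_\text{exit}}\big[\E[R(1)\mid\cX(0)]\big]\le C\big(1+\E_{\pi_\text{exit}}|R(0)|\big)<\infty .
\]
Thus the whole task reduces to \eqref{eq:fin_exit}.

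The key analytic input is a renewal estimate on mean overshoots. For $i=1,\dots,m$ put $g_i(z):=\E\big[S_{-\xi^{(i)}}(\chi_z)-z\big]$, $z\ge 0$, the mean overshoot of $S_{-\xi^{(i)}}$ above the level $z$; I would show that $g_i(z)=o(z)$ as $z\to\infty$. Since $S_{-\xi^{(i)}}$ is centered, it is recurrent and oscillating, so $\chi_z<\infty$ a.s., and $\chi_z$ is the $N_z$-th strictly ascending ladder epoch, where $N_z:=\inf\{n\ge 1\colon H^{(i)}_1+\dots+H^{(i)}_n>z\}$ and $(H^{(i)}_k)$ are i.i.d.\ copies of the ladder height $H^{(i)}\ge 1$ with $\mu_i:=\E H^{(i)}<\infty$ (as established in the proof of Lemma~\ref{lem:exit_entrance_stationary}). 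Then $S_{-\xi^{(i)}}(\chi_z)=H^{(i)}_1+\dots+H^{(i)}_{N_z}$; since $H^{(i)}\ge 1$ forces $N_z\le z+1$, Wald's identity yields $\E\big[S_{-\xi^{(i)}}(\chi_z)\big]=\mu_i\,\E[N_z]$, while the elementary renewal theorem gives $\E[N_z]/z\to 1/\mu_i$, so $g_i(z)=\mu_i\E[N_z]-z=o(z)$. In particular $g_i(z)<\infty$ for every $z$, and taking the maximum over the finitely many $i$: for every $\ve>0$ there is $C_\ve<\infty$ with $\max_i g_i(z)\le\ve z+C_\ve$ for all $z\ge 0$.

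Next I would set up the drift inequality for $\cX_\text{exit}$. Fix a state $(x,i)$ with $x\le 0$. In one step of $\cX_\text{exit}$, the chain $\cX$ first enters $\cN^m$ at time $\tau_\iota=\sigma_\iota+1$, with $\E[R(\tau_\iota)\mid\cX(\sigma_\iota)=(x,i)]\le C(1+|x|)$ by $\mathbf{A}_3$; then, given $\cX(\tau_\iota)=(x',j)\in\cN^m$, the Markov property together with \eqref{e:B} shows that $|R(\sigma_{\iota+1})|$ is distributed as the overshoot of $S_{-\xi^{(j)}}$ above $x'-1$ minus one, so $\E[|R(\sigma_{\iota+1})|\mid\cX(\tau_\iota)=(x',j)]\le g_j(x'-1)\le\ve x'+C_\ve$. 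Writing $P$ for the transition operator of $\cX_\text{exit}$ and combining the two bounds,
\[
(PV)(x,i)=\E[V(\cX_\text{exit}(\iota+1))\mid\cX_\text{exit}(\iota)=(x,i)]\le\ve\,C(1+|x|)+C_\ve=\ve C\,V(x,i)+(\ve C+C_\ve).
\]
Choosing $\ve<1/C$ gives $PV\le\rho V+b$ with $\rho:=\ve C<1$ and $b:=\ve C+C_\ve<\infty$, which (rewriting as $PV\le\rho' V$ off the finite set $\{V\le 2b/(1-\rho)\}$ and $PV\le b'$ on it) is the standard geometric drift condition. Since $V$ has finite sublevel sets and $\cX_\text{exit}$, restricted to its unique essential class, is an irreducible positive recurrent chain (Lemma~\ref{lem:exit_entrance_stationary}), a standard Foster--Lyapunov argument yields $\E_{\pi_\text{exit}}|R(0)|=\E_{\pi_\text{exit}}V<\infty$, i.e.\ \eqref{eq:fin_exit}.

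I expect the main obstacle to be the mean-overshoot estimate, and in particular the fact that it can only be asserted in the \emph{sublinear} form $g_i(z)=o(z)$ and not in the bounded form $g_i(z)=O(1)$: under merely $\Var\xi^{(i)}<\infty$ the stationary overshoot may have infinite expectation (a bound on mean overshoots would require a finite third moment of $\xi^{(i)}$), so $g_i$ cannot be controlled by a constant. The argument must therefore be arranged so that the contraction factor $\ve C$ made available by Assumption $\mathbf{A}_3$ is taken small enough to absorb the slope-$\ve$ part of $g_i(z)\le\ve z+C_\ve$; this is exactly what promotes the one-step estimate to a genuine geometric drift and, with it, to the finiteness of the first moments of $\pi_\text{exit}$ and $\pi_\text{entrance}$.
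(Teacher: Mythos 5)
Your proposal is correct and follows essentially the same route as the paper: the same reduction of \eqref{eq:fin_entrance} to \eqref{eq:fin_exit} via \eqref{eq:exit-entrance} and $\mathbf{A}_3$, and the same key ingredient, namely a sublinear bound on the mean overshoot (the paper quotes it from Vysotsky's formula (18), you rederive it via Wald's identity and the elementary renewal theorem) combined with $\mathbf{A}_3$ and a contraction factor $\ve C<1$. The only difference is the finishing device: the paper iterates the resulting one-step inequality from $R(0)=0$ to obtain $\sup_n \E|R_{\text{exit}}(n)|<\infty$ and then applies the ergodic theorem together with Fatou's lemma, whereas you package the same inequality as a geometric Foster--Lyapunov drift condition and invoke the standard theory to conclude $\E_{\pi_{\text{exit}}}|R(0)|<\infty$; both finishing steps are routine.
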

\begin{proof}
It follows from {\bf A}$_3$ %, {\bf A}$_4$, 
and \eqref{eq:exit-entrance} that it suffices to prove \eqref{eq:fin_exit} only.

With the help of formula (18) in Vysotsky \cite{vysotsky2015limit} we obtain an estimate for the expectation of an overshoot of a level:
for each $\alpha>0$ there is $K(\alpha)>0$ such that for all $(x,i)\in \cN^m$
\ba
\label{eq:est_mom_overshoot}
\E_{x,i}|R(\sigma_1)|\leq K(\alpha)+\alpha x.
\ea
Assume that $R(0)=0$. Denote 
\ba
x_n&:=\E|R(\sigma_n)|=\E|R_\text{exit}(n)|,\\
y_n&:=\E R(\tau_n)=\E R_\text{entrance}(n),\ n\geq 1,
\ea
where $R_\text{exit}$ and $R_\text{entrance}$ are the radii of the embedded Markov chains $\cX_\text{exit}$ and $\cX_\text{entrance}$ respectively.
Then \eqref{eq:est_mom_overshoot} and {\bf A}$_3$ yield inequalities
\ba
y_{n+1} \leq C(1+x_n),\quad x_{n+1}\leq K(\alpha)+\alpha  y_{n+1}.
\ea
Take some $\alpha\in (0,C^{-1})$. It can be shown by induction that
\ba
\sup_n x_n\leq \frac{\alpha C +K(\alpha)}{1-\alpha C}.
\ea
By the ergodic theorem, for any initial distribution we have convergence 
\ba
\lim_{n\to\infty}\frac{\sum_{k=1}^n |R_\text{exit}(k)|}{n}={\E}_{\pi_\text{exit}}|R(0)|\ \ \text{a.s.}
\ea
Hence, the Fatou lemma yields that
\ba
{\E}_{\pi_\text{exit}}|R(0)|\leq \frac{\alpha C +K(\alpha)}{1-\alpha C}<\infty,
\ea
and the Lemma is proved.
\end{proof}

Now we are ready to formulate our results for scaling limits of $\cX$.

Consider a collection of sequences $\{\cX^n(k)\}_{k\geq 0}=(R^n(k),l^n(k))_{k\geq 0}$ 
that have the same transition probabilities as $\{\cX(k)\}_{k\geq 0}$, but maybe different initial conditions
$\cX^n(0)\in\cZ^m$.
The stopping times $\sigma^n(k)$ and $\tau^n(k)$ are defined respectively.

Let
\ba
\label{e:329}
X^n(t)&:= \Big(\frac{{R^n([nt])}}{v_1\sqrt{n}}\1_{l^n([nt])=1},\dots, \frac{{R^n([nt])}}{v_m \sqrt{n}}\1_{l^n([nt])=m}\Big),\quad  t\geq 0,
\ea
be a $\mathbb R^m$-valued stochastic process that lives on the coordinate axes. Note that its coordinates may take
negative values.

Let $W_\textbf{p}(\cdot,x)$ be a Walsh Brownian motion started from 
$x\in E^m$, see \eqref{e:Em}, with weights $\mathbf{p}=(p_1,\dots,p_m)$ and the symmetric local time $L(\cdot,x)$ at 0, 
see the definition of the local time in \eqref{e:loctime}.
 
\begin{thm}
\label{thm:Walsh_lim_pertRW}
Assume that conditions {\bf A}$_1$--{\bf A}$_4$ are satisfied and
\ba
\label{e:x}
X^n(0) \Rightarrow x \in E^m.
\ea
Then we have convergence: 
\ba
(X^n, L^n)\Rightarrow (W_\emph{\textbf{p}}(\cdot,x), L(\cdot,x)),\  n\to\infty,
\ea
where
\begin{equation}
\label{eq:L338}
L^n(t)=\frac{1}{\sqrt n}\sum_{\tau_k^n\leq [nt]} \frac{R^n(\tau_k^n)-R^n(\sigma_{k+1}^n) }{v_{l^n(\tau_k^n)}},\quad t\geq 0,
\end{equation}
and 
\ba
\label{eq:Walsch_probab}
p_k
&= \frac{{\E}_{\pi_\text{\rm entrance}}R(0)v_k^{-1}\1_{l(0)=k}-{\E}_{\pi_\text{\rm exit}}R(0)v_k^{-1}\1_{l(0)=k}}
{{\E}_{\pi_\text{\rm entrance}}R(0)v_{l(0)}^{-1}-{\E}_{\pi_\text{\rm exit}}R(0)v_{l(0)}^{-1}}\\
&=\frac{{\E}_{\pi_\text{\rm entrance}}(R(0)-R(\sigma))v_k^{-1}\1_{l(0)=k}}{{\E}_{\pi_\text{\rm entrance}}(R(0)-R(\sigma))v_{l(0)}^{-1}}\\
&=
\frac{
{\E}_{\pi_\text{\rm exit}}(R(1)v_{l(1)}^{-1} \1_{l(1)=k} - R(0)  v_{l(0)}^{-1}\1_{l(0)=k})}
{{\E}_{\pi_\text{\rm exit}}(R(1)v_{l(1)}^{-1}- R(0)  v_{l(0)}^{-1})}.
\ea
\end{thm}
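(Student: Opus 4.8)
The strategy is to verify the four conditions of the martingale characterization of WBM in Theorem~\ref{thm:Walsh}, applied to the candidate limit of any weakly convergent subsequence of $(X^n, L^n)$. First I would establish tightness of the family $\{(X^n, L^n)\}_{n\ge 1}$ in $D([0,\infty),\mathbb R^m)\times D([0,\infty),\mathbb R)$. Tightness of $X^n$ follows from the fact that, away from the ``membrane'' (the non-positive radii), each coordinate is a time-changed martingale with increments of bounded variance by $\mathbf{A}_1$; the key point is to control the excursions into $\{\ldots,-1,0\}\times\{1,\dots,m\}$, which by $\mathbf{A}_2$ last exactly one step and whose size is controlled in $L^1$ by $\mathbf{A}_3$ together with the overshoot estimate \eqref{eq:est_mom_overshoot} and the stationarity bounds from Lemma~\ref{eq:fin_exit}--\eqref{eq:fin_entrance}. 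From these one shows that the total contribution of the ``negative parts'' of $R^n$ is $o(\sqrt n)$ uniformly on compacts, so $X^n$ is asymptotically supported on $E^m$ and the relation $X_i^n(t)X_j^n(t)=0$ survives in the limit (condition (1)); similarly $L^n$ is asymptotically nondecreasing and its increments occur only near the membrane, which in the limit gives conditions (2) and (4).

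The core of the argument is identifying the martingale structure, i.e.\ condition (3): that $M_i(t):=X_i(t)-p_i\nu(t)$ are continuous square-integrable martingales with $\langle M_i\rangle_t=\int_0^t \1_{X_i(s)>0}\,ds$. For this I would build discrete pre-limit martingales. Between the times $\sigma_k^n$ and $\tau_{k+1}^n=\sigma_{k+1}^n+1$, the rescaled radius $R^n/(v_{l^n}\sqrt n)$ on ray $l^n$ is a martingale with the correct quadratic variation (by $\mathbf{A}_1$ and Donsker's theorem, $\sum (\xi^{(i)}_j)^2/(v_i^2 n)\to$ elapsed time). The accumulated jump over the membrane at each entrance time, namely $\big(R^n(\tau_k^n)-R^n(\sigma_{k+1}^n)\big)/v_{l^n(\tau_k^n)}$, is exactly the increment of $L^n$ in \eqref{eq:L338}; so $X_i^n(t)-$ (sum over $k$ with $l^n(\tau_k^n)=i$ of these increments up to $[nt]$) is, up to small error terms, a martingale. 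The decisive step is to show that the $i$-th piece of these accumulated membrane-increments is asymptotically $p_i$ times the total $L^n$, with $p_i$ given by \eqref{eq:Walsch_probab}. This is where the embedded exit/entrance chains and their stationary distributions enter: by the ergodic theorem for $\cX_\text{exit}$ (which has a unique stationary law $\pi_\text{exit}$ by Lemma~\ref{lem:exit_entrance_stationary}), the empirical average over the first $N$ excursions of $\big(R(\tau)v_{l(\tau)}^{-1}\1_{l(\tau)=i}\big)$ and of $R(\sigma)v_{l(\sigma)}^{-1}\1_{l(\sigma)=i}$ converge to the corresponding $\pi_\text{entrance}$- and $\pi_\text{exit}$-expectations; the $L^1$-bounds of Lemma~\ref{eq:fin_exit}--\eqref{eq:fin_entrance} upgrade this to convergence of means and give uniform integrability, so the proportion going to ray $i$ converges to the ratio in the first display of \eqref{eq:Walsch_probab}. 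The equivalent forms in the second and third lines follow from the transfer relations \eqref{eq:exit-entrance} linking $\pi_\text{exit}$ and $\pi_\text{entrance}$.

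For the quadratic variation I would pass to the limit in the discrete Doob decomposition: $\langle M_i^n\rangle_{[nt]}$ equals $\frac1n$ times a sum over steps on ray $i$ with $R^n>0$ of conditional second moments, which by $\mathbf{A}_1$ is $v_i^2$ per step, divided by the $v_i^2$ in the normalization — giving $\int_0^t\1_{X_i(s)>0}\,ds$ in the limit — plus a membrane contribution that is negligible because excursions below zero last one step and, by $\mathbf{A}_3$ and the overshoot bounds, contribute $o(1)$ after scaling. One must also check the limiting martingales are genuinely martingales (not just local martingales): this uses uniform $L^2$-bounds on $M_i^n$ over compact time intervals, again a consequence of $\mathbf{A}_3$ and the stationary-distribution estimates. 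Finally, since Theorem~\ref{thm:Walsh} characterizes the WBM together with its local time uniquely, every subsequential limit coincides with $(W_\mathbf{p}(\cdot,x),L(\cdot,x))$, which by the initial-condition hypothesis \eqref{e:x} starts at the right point, and the full sequence converges.

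\textbf{Main obstacle.} The delicate point is the ``one-step membrane'' accounting: making rigorous that the complicated excursion structure below zero — where the radius can make an $L^1$-sized but not bounded jump, and the label can switch — contributes, after Donsker scaling, exactly a $p_i$-multiple of a single local time $\nu$, with no spurious drift or extra quadratic variation. This requires carefully combining the renewal/overshoot estimates (Gut, Vysotsky, Spitzer) with the ergodic theorem for the embedded chains and a uniform-integrability argument driven by $\mathbf{A}_3$; controlling the number of excursions in $[0,[nt]]$ (of order $\sqrt n$) against the size of each membrane jump is the crux.
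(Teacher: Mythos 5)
Your proposal follows essentially the same route as the paper's proof: decompose the scaled chain into on-ray random-walk (``normal'') parts and membrane (``critical'') parts, identify the weights $p_i$ via the ergodic theorem for the embedded entrance/exit chains together with the overshoot/integrability estimates, and conclude through the martingale characterization of the WBM (Theorem~\ref{thm:Walsh}). The paper implements the technical steps a bit differently — it obtains the radial convergence and the local time from a deterministic Skorokhod-reflection mapping theorem and identifies $M_i^\infty(t)=\int_0^t \1_{X_i^\infty(s)>0}\,\di W(s)$ via Skorokhod's theorem on convergence of stochastic integrals instead of your discrete Doob-decomposition/uniform-integrability argument — but these are variants within the same strategy.
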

 This Theorem will be proven in Section \ref{section2_3_2}.

\begin{remk}
Equality of expectations in \eqref{eq:Walsch_probab} follows from 
\eqref{eq:exit-entrance} and the definition of the embedded exit and entrance Markov chains.
\end{remk}

Let $X$ be a perturbed random walk with a membrane $\{-d,\dots,d\}$, $d\geq 0$, that is a Markov chain on $\mbZ$ such that
the following assumption holds true:

\noindent
\textbf{B}$_1$.
\ba
&\Pb\Big(X(1)=x+y \,\Big|\,  X(0)=x\Big)=\Pb( \xi_+=y),\quad x>d,\\
&\Pb\Big(X(1)=x+y \, \Big| \, X(0)=x\Big)=\Pb( \xi_-=y),\quad x<-d,
\ea
where the integer valued random variables $\xi_-$, $\xi_+$ satisfy
\ba
\E \xi_\pm=0,\quad v_\pm^2=\text{Var}\, \xi_\pm\in (0,\infty).
\ea
and generate 1-arithmetic random walks.

In order to specify the scaling limit of  $X$ analogously to Theorem \ref{thm:Walsh_lim_pertRW} we have to introduce the 
embedded entrance and exit Markov chains and entrance and exit stopping times for the Markov chain $X$.

We define 
\ba 
\label{eq:defn_of_exits_entra}
&\widetilde \sigma_0:=0,\\
&\widetilde \tau_\iota:=\inf\{k\geq\widetilde \sigma_\iota\colon \ |X(k)|>d \}, \quad \iota\geq 0,\\
&\widetilde \sigma_{\iota+1}=\inf\{k>\widetilde \tau_\iota\colon  X(k-1)> d,\ X(k)\leq d \quad \text{or}\quad  X(k-1)<- d,\ X(k)\geq -d\},\quad \iota\geq 0.
\ea
The difference between definition \eqref{eq:defn_of_exits_entra} and definition \eqref{e:sigma}, \eqref{e:tau} is twofold. First, when 
$X(\widetilde \sigma_\iota)\in\{-d,\dots,d\}$, 
the Markov chain may stay on the membrane $\{-d,\dots,d\}$ more than one step, and hence in general in this case we have 
$\widetilde \tau_\iota\geq \widetilde \sigma_\iota+1$. 
Second, when $X$ ``jumps over'' the membrane, i.e., when $X(\widetilde \sigma_\iota-1)>d$ and $X(\widetilde \sigma_\iota)<-d$ or vice versa, we get
$\widetilde \tau_\iota = \widetilde \sigma_\iota$.

We adapt the set of  Assumptions \textbf{A} of Theorem \ref{thm:Walsh_lim_pertRW}.

% \noindent
% \textbf{B}$_1$.
% \ba
% &\Pb\Big(X(1)=x+y \,\Big|\,  X(0)=x\Big)=\Pb( \xi_+=y),\quad x>d,\\
% &\Pb\Big(X(1)=x+y \, \Big| \, X(0)=x\Big)=\Pb( \xi_-=y),\quad x<-d,
% \ea
% where the integer valued random variables $\xi_-$, $\xi_+$ satisfy
% \ba
% \E \xi_\pm=0,\quad v_\pm^2=\text{Var}\, \xi_\pm\in (0,\infty).
% \ea
% and generate 1-arithmetic random walks.

 \noindent
\textbf{B}$_2$.  For all $x\in \{- d,\dots,d\}$
\ba
\Pb( \widetilde \tau_0<\infty   |   X(0)=x)=1.
\ea

\noindent
\textbf{B}$_3$. 
 For all $x\in \{- d,\dots,d\}$
\ba
 \E\Big[|X(\widetilde \tau_0)|  \Big|  X(0)=x\Big]<\infty.
\ea

\noindent
\textbf{B}$_4$.  The states $\mbZ\backslash \{-d,\dots,d\}$  of the Markov chain $X$ communicate. %
 
\medskip 

As above, we introduce the embedded Markov chains $X_\text{exit}(\iota)=X(\widetilde \sigma_\iota)$, $\iota\geq 1$, and 
$X_\text{entrance}(\iota)=X(\widetilde \tau_\iota)$, $\iota\geq 0$. Similarly to Lemmas 
\ref{lem:exit_entrance_stationary} and \ref{eq:fin_exit} it can be shown 
that $X_\text{exit}$ and 
$X_\text{entrance}$ have unique stationary  distributions $\pi_\text{exit}$ and  $\pi_\text{entrance}$, respectively,
which are integrable.

Consider a collection of Markov chains $\{ X^n(\cdot)\}_{n\geq 1}$ that have the same transition probabilities as $X(\cdot)$ but maybe
have different initial values $X^n(0)$.

\begin{thm}
\label{thm:Skew_lim_pertRW}
Suppose that Assumptions {\bf B}$_1$--{\bf B}$_4$ are satisfied and
\ba\label{eq:conv_initial_cond_SBM-846}
 \vf\Big(\frac{X^n(0)}{ \sqrt{n}}\Big) \Rightarrow x \in \mbR,
\ea
where $\vf$ is defined in \eqref{e:phi}.
Then we have convergence to a skew Brownian motion $W^\text{\emph{skew}}_\gamma(\cdot,x)$ started at $x$:
\ba
\vf\Big(\frac{X^n(n\cdot )}{ \sqrt{n}}\Big)\Rightarrow W_{\gamma}^\text{\emph{skew}}(\cdot,x),\quad  n\to\infty,
\ea
where 
\ba
\label{eq:gamma_probab}
\gamma= 
\frac{{\E}_{\pi_\text{\emph{entrance}}}\vf(X(0)-X(\widetilde \sigma_1)) }{{\E}_{\pi_\text{\emph{entrance}}}|\vf(X(0)-X(\widetilde \sigma_1))|}
&=\frac{{\E}_{\pi_\text{\emph{entrance}}} (X(0)-X(\widetilde \sigma_1))v^{-1}_{\sgn(X(0)-X(\widetilde \sigma_1))}}
{{\E}_{\pi_\text{\emph{entrance}}} |X(0)-X(\widetilde \sigma_1)|v^{-1}_{\sgn(X(0)-X(\widetilde \sigma_1))} }\\
&=\frac{{\E}_{\pi_\text{\emph{entrance}}} (X(0)-X(\widetilde \sigma_1))v^{-1}_{\sgn X(0)}}
{{\E}_{\pi_\text{\emph{entrance}}} |X(0)-X(\widetilde \sigma_1)|v^{-1}_{\sgn X(0)} }.
\ea
\end{thm}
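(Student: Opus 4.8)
The plan is to derive Theorem~\ref{thm:Skew_lim_pertRW} from Theorem~\ref{thm:Walsh_lim_pertRW} by unfolding the perturbed walk $X$ on $\mbZ$ into a two-label Markov chain $\cX = (R,l)$ on $\cZ^2 = \mbZ\times\{+,-\}$ satisfying Assumptions $\mathbf{A}_1$--$\mathbf{A}_4$, and then invoking Corollary~\ref{thm:Walsh-skew} with $I=\{1\}$ (the ``$+$'' ray). The construction is the one described in the Introduction: the layer $\mbN\times\{+\}$ is identified with the half-line $\{d+1,d+2,\dots\}$ to the right of the membrane via $x\mapsto(x-d,+)$, and $\mbN\times\{-\}$ with $\{-d-1,-d-2,\dots\}$ via $x\mapsto(|x|-d,-)$, so that the increments on $\mbN\times\{+\}$ have law $\xi_+$ and those on $\mbN\times\{-\}$ have law $\xi_-$; this gives $v_1 = v_+$, $v_2 = v_-$ and $\mathbf{A}_1$. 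The whole excursion of $X$ inside the membrane $\{-d,\dots,d\}$ together with the over/under-shoot out of it is collapsed into a single transition of $\cX$ from the non-positive part $\{\dots,-1,0\}\times\{\pm\}$ into $\mbN\times\{\pm\}$: when $X$ is at the exit point just before $\widetilde\sigma_\iota$ (say $X(\widetilde\sigma_\iota-1)>d$), we place $\cX$ at $(-(X(\widetilde\sigma_\iota-1)-d-1)\wedge 0,\,+)$ or, more cleanly, record the exit position itself (which may be $\leq d$, i.e.\ inside the membrane, or jump over it). One must check that $\mathbf{B}_2\Rightarrow\mathbf{A}_2$ (exit from membrane with probability one $\Rightarrow$ one-step return to $\mbN$), $\mathbf{B}_3\Rightarrow\mathbf{A}_3$ (integrability of the membrane excursion $\Rightarrow$ the linear bound; the affine term $C(1+|x|)$ absorbs the possible jump-over of size comparable to $|x|$ plus the membrane width), and $\mathbf{B}_4\Leftrightarrow\mathbf{A}_4$.

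Having set up the correspondence, the second step is \textbf{to match the processes and their functionals}. For the unfolded chain one has the $E^2$-valued process $X^n$ of \eqref{e:329}, and I claim that under the identification above
\ba
\vf\!\left(\frac{X^n(n\cdot)}{\sqrt n}\right) = X^n_1(\cdot) - X^n_2(\cdot) + o(1)
\ea
uniformly on compacts in probability, the $o(1)$ coming only from the bounded membrane correction $\pm d/\sqrt n\to 0$ and from the negligible time $\cX$ spends on the non-positive layers (which, by $\mathbf{A}_3$ and the argument already used in Lemma~\ref{eq:fin_exit}, contributes $O(1/\sqrt n)$ both to displacement and, after rescaling time by $n$, to elapsed time). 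By Theorem~\ref{thm:Walsh_lim_pertRW}, $X^n\Rightarrow W_{\mathbf p}(\cdot,x')$ with $x'=(\max(x,0),\max(-x,0))$, and by the continuous mapping theorem plus Corollary~\ref{thm:Walsh-skew}, $X^n_1-X^n_2\Rightarrow W^{\text{skew}}_\gamma(\cdot,x)$ with $\gamma = p_1-p_2 = 2p_1-1$. Combined with the uniform approximation above and \eqref{eq:conv_initial_cond_SBM-846}, this yields $\vf(X^n(n\cdot)/\sqrt n)\Rightarrow W^{\text{skew}}_\gamma(\cdot,x)$.

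The third step is \textbf{to identify $\gamma$ with the formula \eqref{eq:gamma_probab}}. From \eqref{eq:Walsch_probab} (second line), $p_k = \E_{\pi_{\text{entrance}}}[(R(0)-R(\sigma))v_k^{-1}\1_{l(0)=k}]\big/\E_{\pi_{\text{entrance}}}[(R(0)-R(\sigma))v_{l(0)}^{-1}]$, so
\ba
\gamma = p_1 - p_2 = \frac{\E_{\pi_{\text{entrance}}}[(R(0)-R(\sigma))v_{l(0)}^{-1}(\1_{l(0)=1}-\1_{l(0)=2})]}{\E_{\pi_{\text{entrance}}}[(R(0)-R(\sigma))v_{l(0)}^{-1}]}.
\ea
Now translate back: under the identification, $l(0)=1$ corresponds to $X(\widetilde\tau_\iota)>d$ i.e.\ $\sgn X(0)=+$ in the $X$-entrance chain, and $R(0)-R(\sigma)$ (with the sign convention $R(\sigma)\leq 0$, $R(0)>0$) corresponds exactly to $X(0)-X(\widetilde\sigma_1)$ up to the membrane shift $\pm 2d$, whose contribution to a difference like $R(0)-R(\sigma)$ cancels; moreover $v_{l(0)}^{-1}\,\mathrm{sgn\ of\ the\ label}$ is precisely $v^{-1}_{\sgn(X(0)-X(\widetilde\sigma_1))}$, which is the numerator structure in \eqref{eq:gamma_probab}. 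The denominator $\E_{\pi_{\text{entrance}}}[(R(0)-R(\sigma))v_{l(0)}^{-1}]$ becomes $\E_{\pi_{\text{entrance}}}|\vf(X(0)-X(\widetilde\sigma_1))|$ because $R(0)-R(\sigma)>0$ always, so the weighting by $v_{l(0)}^{-1}$ turns it into $|\vf(\cdot)|$. The three displayed forms of $\gamma$ then correspond to the three displayed forms of $p_k$ together with the relation $\sgn(X(0)-X(\widetilde\sigma_1)) = \sgn X(0)$ valid on the entrance chain (the entrance point and the sign of the net displacement since the last exit agree, since $X$ just came out on the side it left from, unless it jumped over — and the jump-over case is handled by the same bookkeeping).

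\textbf{The main obstacle} I expect is the bookkeeping around the ``jump over the membrane'' event and, relatedly, making the uniform-approximation claim of the second step fully rigorous: one must show that collapsing a membrane excursion of $X$ (random, but integrable, duration and displacement) into a single step of $\cX$ does not distort the Donsker scaling — i.e.\ that $\sum_{\widetilde\tau_k\leq [nt]}(\widetilde\tau_k-\widetilde\sigma_k)/n\to 0$ and the associated displacement over these excursions, divided by $\sqrt n$, is negligible uniformly on compacts. This is where $\mathbf{B}_3$ and the Birkhoff/Fatou argument of Lemma~\ref{eq:fin_exit} (applied now to excursion lengths and sizes, using stationarity of $\pi_{\text{entrance}}$) do the real work; the number of membrane visits up to time $[nt]$ is $O(\sqrt n)$ by the WBM local-time convergence, and each contributes $O(1)$ in expectation, giving the desired $O(1/\sqrt n)$ and $o(1)$ bounds. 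The rest is the continuous mapping theorem and the algebra of the previous paragraph.
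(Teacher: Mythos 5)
Your proposal follows essentially the same route as the paper's proof: unfold $X$ into a two-label chain on $\mathbb{Z}\times\{-,+\}$ satisfying \textbf{A}$_1$--\textbf{A}$_4$, apply Theorem \ref{thm:Walsh_lim_pertRW} and Corollary \ref{thm:Walsh-skew}, show that the time and displacement accumulated during membrane excursions (and the jump-over duplication) are negligible, and read off $\gamma$ from \eqref{eq:Walsch_probab}. The paper formalizes your ``uniform approximation'' step through two explicit random time changes and the composition results (Corollaries \ref{corl:composition ordinary} and \ref{corl:randomWalkTimeChangeConvergence}), using the bound $2d/\sqrt{n}$ on the displacement over removed intervals together with a Doob-inequality argument where you sketch a Birkhoff/Fatou bound, but the underlying idea is the same.
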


This Theorem will be proven in Section \ref{section:proof_skewBM}.

\begin{remk}
The case when the membrane $A$ is an arbitrary finite set $\{x_1,\dots,x_m\}\subset \mbZ$ % not necessarily equal to $\{-d, \dots,d\}$, 
is also covered by Theorem \ref{thm:Skew_lim_pertRW}. Indeed, $X$ can be considered as a random walk perturbed on bigger 
membrane $\{-d,\dots,d\}$, where $d=\max_{1\leq i\leq m}|x_i|$.
\end{remk}

\begin{remk}
\label{r:3.7}
 Condition $\mathbf{B}_3$ can be replaced with a simpler one:\\
$\mathbf{B}'_3$.  
\ba
\max_{|x|\leq d} \E\Big[|X(1)|\, \Big|\,  X(0)=x\Big]<\infty.
\ea
 Indeed, for any $|x|\leq d$ we have
\ba
\E\Big[|X(\tilde \tau_0)\,|\,X(0)=x\Big]
&= \sum_{|y|\leq d}\sum_{k=1}^\infty \P\Big(\widetilde \tau_0=k,X(k-1)=y\,|\,X(0)=x\Big)\E\Big[|X(k)|\,\Big|\,\widetilde \tau_0=k,X(k-1)=y\Big]\\
&= \sum_{|y|\leq d}\sum_{k=1}^\infty \P\Big(\widetilde \tau_0=k,X(k-1)=y\,|\,X(0)=x\Big)\E\Big[|X(1)|\,\Big|\,\widetilde \tau_0=1,X(0)=y\Big]\\
&= \sum_{|y|\leq d}\sum_{k=1}^\infty \P\Big(\widetilde \tau_0=k,X(k-1)=y\,|\,X(0)=x\Big) \frac{\E[|X(1)|\1_{|X(1)|>d}\,|\,X(0)=y]}
{\P(|X(1)|>d\,|\,X(0)=y) }\\
&\leq  \sum_{|y|\leq d}
\frac{\E[|X(1)|\1_{|X(1)|>d}\,|\,X(0)=y]} {\P(|X(1)|>d\,|\,X(0)=y) }<\infty.
\ea
In the last line we use condition \textbf{B}$_2$ and the convention that $\frac{0}{0}=0$.
\end{remk}

 \begin{corl}
\label{c:sobm}
The weak limit of the sequence
$ \Big\{\frac{X([n\cdot ])}{\sqrt{n}} \Big\}_{n\geq 1}$ is a process 
$(Y(t))_{t\geq 0}:=\Big(\vf^{-1}(W^\text{\emph{skew}}_\gamma(t))\Big)_{t\geq 0}$, which is a skew oscillating Brownian motion
that satisfies the SDE
\begin{equation}
\label{e:Xinf}
Y(t) = \int_0^t (v_-\1_{Y(s)< 0} + v_+\1_{Y(s)\geq  0})\,\di W(s)
+ \frac{\gamma (v_++v_-) + v_+-v_-}{\gamma (v_+ - v_-) + v_+ + v_-} \, L_0^{Y}(t),\quad t\geq 0,
\end{equation}
where 
\ba
L_0^Y(t)=\lim_{\ve\to0+}\frac{1}{2\ve}\int_0^t
\1_{|Y(s)|<\ve} \,\di \langle Y\rangle_s= \lim_{\ve\to0+}\frac{1}{2\ve}\int_0^t\Big(v_-^2\1_{ -\ve< Y(s)< 0} + v_+^2\1_{0\leq Y(s)<\ve}\Big)   \,\di s
\ea
is the symmetric semimartingale local time of $Y$ at 0. 
\end{corl}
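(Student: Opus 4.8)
The plan is to obtain the corollary from Theorem~\ref{thm:Skew_lim_pertRW} by the continuous mapping theorem, and then to read off the stochastic differential equation for the limit via It\^o--Tanaka calculus. First, I would apply Theorem~\ref{thm:Skew_lim_pertRW} with $X^n=X$ for every $n$ and the given fixed initial law of $X(0)$: then $X(0)/\sqrt n\to 0$ almost surely, so \eqref{eq:conv_initial_cond_SBM-846} holds with $x=0$ and $\vf(X([n\cdot])/\sqrt n)\Rightarrow W^\text{skew}_\gamma(\cdot,0)$ in $D([0,\infty),\mbR)$. The function $\vf$ of \eqref{e:phi} is a bijection of $\mbR$ whose inverse $\vf^{-1}(z)=v_+z\1_{z\geq 0}+v_-z\1_{z<0}$ is globally Lipschitz, hence $w\mapsto\vf^{-1}\circ w$ is continuous on the Skorokhod space; by the continuous mapping theorem,
\[
\frac{X([n\cdot])}{\sqrt n}=\vf^{-1}\!\left(\vf\!\left(\frac{X([n\cdot])}{\sqrt n}\right)\right)\Rightarrow \vf^{-1}\big(W^\text{skew}_\gamma(\cdot,0)\big)=:Y .
\]
It then remains to show that $Y$ solves \eqref{e:Xinf}.

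Set $Z:=W^\text{skew}_\gamma(\cdot,0)$, which by \eqref{e:SDEskew} solves $\di Z(t)=\di W(t)+\gamma\,\di L^Z_0(t)$, with $W$ a standard Brownian motion and $L^Z_0$ the symmetric local time of $Z$ at $0$. Since $\vf^{-1}(z)=v_+z^+-v_-z^-$, I would apply Tanaka's formula to $z\mapsto z^\pm$:
\[
Z^+(t)=\int_0^t\1_{Z(s)>0}\,\di Z(s)+\tfrac12 L^{Z,+}_0(t),\qquad
Z^-(t)=-\int_0^t\1_{Z(s)<0}\,\di Z(s)+\tfrac12 L^{Z,-}_0(t),
\]
where $L^{Z,+}_0$ and $L^{Z,-}_0$ are the right- and left-hand local times of $Z$ at $0$. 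Because $L^Z_0$ increases only on $\{Z=0\}$, the $\di L^Z_0$-components of the two integrals against $\di Z$ vanish and these reduce to Brownian integrals; adding and subtracting the two identities and comparing the result with $\di Z=\di W+\gamma\,\di L^Z_0$ gives the familiar skew-Brownian relations $L^{Z,+}_0=(1+\gamma)L^Z_0$ and $L^{Z,-}_0=(1-\gamma)L^Z_0$. Substituting these, and using that $\{Z>0\}=\{Y>0\}$, $\{Z<0\}=\{Y<0\}$, while $Z$ (hence $Y$) spends zero Lebesgue time at $0$, I obtain
\[
Y(t)=v_+Z^+(t)-v_-Z^-(t)=\int_0^t\big(v_-\1_{Y(s)<0}+v_+\1_{Y(s)\geq 0}\big)\,\di W(s)+\tfrac12\big(\gamma(v_++v_-)+v_+-v_-\big)L^Z_0(t).
\]

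To finish, I would express $L^Z_0$ through the symmetric semimartingale local time $L^Y_0$ of $Y$ at $0$. Applying Tanaka to $|Y|=v_+Z^++v_-Z^-$ and again using $L^{Z,\pm}_0=(1\pm\gamma)L^Z_0$ yields
\[
L^Y_0(t)=\tfrac12\big(v_+(1+\gamma)+v_-(1-\gamma)\big)L^Z_0(t)=\tfrac12\big(\gamma(v_+-v_-)+v_++v_-\big)L^Z_0(t),
\]
and plugging this into the previous display converts the coefficient $\tfrac12(\gamma(v_++v_-)+v_+-v_-)$ into precisely $\frac{\gamma(v_++v_-)+v_+-v_-}{\gamma(v_+-v_-)+v_++v_-}$, which is \eqref{e:Xinf}; moreover $Y(0)=\vf^{-1}(0)=0$. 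The quadratic variation $\langle Y\rangle_t=\int_0^t(v_-^2\1_{Y(s)<0}+v_+^2\1_{Y(s)\geq 0})\,\di s$ is read off the diffusion coefficient, and the two occupation-time representations of $L^Y_0$ are then immediate from the definition of the symmetric local time.

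The step that needs genuine care is the bookkeeping between the symmetric local time $L^Z_0$ appearing in \eqref{e:SDEskew} and the one-sided local times produced by Tanaka's formula — in particular, justifying the weights $1\pm\gamma$ — since this is exactly what generates the $\gamma(v_++v_-)$ contribution to the drift and hence the specific constant in \eqref{e:Xinf}. An equivalent route is to apply the generalized It\^o formula for a difference of convex functions directly to $\vf^{-1}$ and $Z$, where one must likewise be careful to use the symmetric derivative $\tfrac12(v_++v_-)$ of $\vf^{-1}$ at the origin against $\di L^Z_0$ together with the distributional second derivative $(v_+-v_-)\delta_0$; both approaches yield the same constant.
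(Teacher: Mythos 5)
Your proposal is correct and in substance follows the paper's own proof: both arguments express $Y=\vf^{-1}(W^\text{skew}_\gamma)$ via It\^o--Tanaka calculus to get the drift $\tfrac12\big(\gamma(v_++v_-)+v_+-v_-\big)L_0^{W^\text{skew}_\gamma}$, and then compare two Tanaka representations of $|Y|$ (one through $W^\text{skew}_\gamma$, one treating $Y$ as a semimartingale) to obtain $L_0^Y=\tfrac12\big(\gamma(v_+-v_-)+v_++v_-\big)L_0^{W^\text{skew}_\gamma}$ and hence the coefficient in \eqref{e:Xinf}. The only differences are cosmetic: you derive the first identity from one-sided Tanaka formulas plus the relations $L^{Z,\pm}_0=(1\pm\gamma)L^Z_0$, while the paper applies the symmetric It\^o--Tanaka formula to $\vf^{-1}$ directly (the ``equivalent route'' you mention), and you additionally spell out the continuous-mapping step for the convergence, which the paper leaves implicit.
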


A result similar to Theorem \ref{thm:Skew_lim_pertRW}  is also true for a perturbed random walk on a graph, where the limit process will be a Walsh Brownian motion. Let us describe the construction. For some $m\in\mbN$, consider a Markov chain $\cX$ on the state space 
\ba
\Big(\mbN\times\{1,\dots,m\}\Big)\cup \{0\}=:\cN^m\cup\{0\}.
\ea
The set $\cN^m\cup\{0\}$ consists of $m$ natural-valued rays with a common end-point 0.  
If $\cX(k)\in\mbN\times \{i\}$, 
we say that $\cX$ belongs to $i$-th ray.

We denote the radius process associated with $\cX(\cdot)$ by $R=(R(k))_{k\geq 0}$, i.e.,
\ba
R(k)=\begin{cases}
      x, \text{ if } \cX(k)=(x,i)\in\cN_m ,\\
      0, \text{ if } \cX(k)=0.
     \end{cases}
\ea

Let us formulate a set of assumptions concerning the dynamics of $\cX$.

\noindent
\textbf{C}$_1$. Let $(\xi_k^{(1)})_{k\geq 1},\dots,(\xi_k^{(m)})_{k\geq 1}$ be $m$ independent sequences of iid 
integer valued random variables with
\ba
\E\xi^{(i)}_k=0 \quad \text{and}\quad  v^2_i:=\operatorname{Var}\xi^{(i)}_k\in (0,\infty),\quad 1\leq i\leq m. 
\ea
We assume that for each $k\geq 0$, $\cX(k)$ is independent of the `future' $(\xi_{k+j}^{(1)})_{j\geq 1},\dots,(\xi_{k+j}^{(m)})_{j\geq 1}$.

On each ray $\mbN\times\{i\}$, $\cX$ behaves as a random walk with increments $\xi^{(i)}$, that is if $\cX(k)=(x,i) \in \cN^m$ and $x+\xi_{k+1}^{(i)}>0$, then 
\ba
\cX(k+1):=(x+\xi_{k+1}^{(i)}, i). 
\ea
If $\cX(k)=(x,i) \in \cN^m$ and $x+\xi_{k+1}^{(i)}=0$, then we set 
\ba
\cX(k+1):=0. 
\ea
Finally, we assume that if $\cX(k)=(x,i) \in \cN^m$ and $x+\xi_{k+1}^{(i)}<0$, then 
the conditional distribution 
\ba
\text{Law}\big(\cX(k+1)\,\big|\,\cX(k)=(x,i), x+\xi_{k+1}^{(i)}<0\big)
\ea
depends only on $i$ and $x+\xi_{k+1}^{(i)}$.

\noindent

\textbf{C}$_2$. The conditional distribution $\text{Law}(\cX(1)|\cX(0)=0)$ satisfies
\ba
&\P\big( R(1)\in\mbN \,\big| \, \cX(0)=0\big)=1,\\
&\E\big[ R(1) \,\big| \, \cX(0)=0\big]<\infty.
\ea
\textbf{C}$_3$.  There is a constant $C>0$ such that for all $(x,i)\in \cN^m$
\ba
\E\big[ R(k+1) \, \big| \,  \cX(k)=(x,i),\, x+\xi_{k+1}^{(i)}<0\big]\leq C(1+|x+\xi_{k+1}^{(i)}|).
\ea
\textbf{C}$_4$.  All the states $\cN^m$ of $\cX$ communicate.

We define the following stopping times:
\ba 
&\widehat \sigma_0:=0,\\
&\widehat\tau_{\iota}:=\inf\{k\geq \widehat \sigma_\iota\colon  X(k)> 0 \},\quad \iota\geq 0,\\
&\widehat \sigma_{\iota+1}:=\inf\{k\geq\widehat \tau_\iota\colon \  R(k)+\xi_{k+1}^{(i)}\leq 0 \}, \quad \iota\geq 1.
\ea
The random variable $\widehat \sigma_\iota$ is the $\iota$-th moment when $X$ jumps to 0 or ``wants to overjump 0''. 
Introduce the entrance Markov chain $X_\text{entrance}(\iota):=X(\widehat\tau_{\iota})$, $\iota\geq 1$, with the  
(unique) invariant distribution $\pi_\text{entrance}$.
\begin{thm}
\label{thm:Walsh_non_rigorous}
Assume that Assumptions $\mathbf{C}_1$--$\mathbf{C}_4$ hold true.
Then the Donsker scaling of $R$ converges in distribution to a Walsh Brownian motion $W_\mathbf{p}(\cdot)$
starting at 0:
\ba
\Big(\frac{R([n\cdot ])}{v_1\sqrt n}\1_{l(n)=1},\dots,\frac{R([n\cdot ])}{v_m\sqrt n}\1_{l(n)=m}\Big) \Rightarrow W_\mathbf{p}(\cdot),\quad n\to\infty.
\ea
The weights $\mathbf{p}=(p_1,\dots,p_m)$ of the Walsh Brownian motion are defined as follows:
\ba
p_k=  \frac{{\E}_{\pi_\text{\rm entrance}}\Big[\big(R(0)-R(\widehat\sigma_1)-\xi_{\widehat\sigma_1+1}^{(k)}\big)v_{k}^{-1}\1_{l(0)=k}\Big]}{{\E}_{\pi_\text{\rm entrance}}\Big[\big(R(0)-R(\widehat\sigma_1)-\xi_{\widehat\sigma_1+1}^{(l(0))}\big)v_{l(0)}^{-1}\Big]}.
\ea
\end{thm}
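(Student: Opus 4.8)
The plan is to reduce the statement to Theorem~\ref{thm:Walsh_lim_pertRW} by embedding the graph-valued Markov chain $\cX$ on $\cN^m\cup\{0\}$ into a (possibly enlarged) chain $\wh\cX=(R,l)$ on $\cZ^m=\mbZ\times\{1,\dots,m\}$ satisfying Assumptions $\mathbf{A}_1$--$\mathbf{A}_4$, in such a way that the Donsker scalings agree in the limit. The natural device is the ``unfolding'' already used in the introduction: whenever $\cX$ sits at $0$ or would overjump $0$ from ray $i$ (i.e. $R(k)+\xi^{(i)}_{k+1}\le 0$), instead of collapsing to the origin we let the radius of $\wh\cX$ record the virtual non-positive value $R(k)+\xi^{(i)}_{k+1}$ and keep the label $i$; from there $\wh\cX$ makes exactly one step back into $\cN^m$ with the conditional law prescribed by $\mathbf{C}_1$ (overjump case) or $\mathbf{C}_2$ (the value $0$ case). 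On $\cN^m$ itself $\wh\cX$ coincides with $\cX$, so $\mathbf{A}_1$ follows from $\mathbf{C}_1$; $\mathbf{A}_2$ from the one-step return built into the construction together with $\mathbf{C}_2$; $\mathbf{A}_3$ from $\mathbf{C}_2$ and $\mathbf{C}_3$ (the non-positive value visited at time $\sigma$ is $R(\sigma-1)+\xi^{(i)}_\sigma$, whose modulus controls the next jump); and $\mathbf{A}_4$ is exactly $\mathbf{C}_4$. The stopping times $\wh\sigma_\iota,\wh\tau_\iota$ of $\cX$ correspond under the embedding to the $\sigma_\iota,\tau_\iota$ of $\wh\cX$, so $\wh\cX_{\text{entrance}}$ has the same stationary law $\pi_{\text{entrance}}$ as $X_{\text{entrance}}$, and Lemmas~\ref{lem:exit_entrance_stationary}, \ref{eq:fin_exit} apply.

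Next I would invoke Theorem~\ref{thm:Walsh_lim_pertRW} with initial condition $\cX(0)=0$, i.e. $X^n(0)=0\in E^m$, to conclude that
\ba
\Big(\tfrac{R(\wh\cdot)}{v_1\sqrt n}\1_{l=1},\dots,\tfrac{R(\wh\cdot)}{v_m\sqrt n}\1_{l=m}\Big)\Rightarrow W_{\mathbf p}(\cdot),
\ea
where $R$ here is the radius of $\wh\cX$, for a WBM with weights given by formula \eqref{eq:Walsch_probab}. It then remains to (i) identify this weight formula with the one in the statement, and (ii) argue that the scaling limit of the radius of the \emph{original} chain $\cX$ coincides with that of $\wh\cX$. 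For (ii): the two radius processes differ only on the (a.s.\ finite) excursions below $1$, on which $\wh R$ attains the value $R(\sigma-1)+\xi^{(i)}_\sigma$ for a single time step before returning to $\cN^m$, whereas the original $R$ is set to $0$. In the $n$-scaling a single step is negligible, and the overshoot $|R(\sigma-1)+\xi^{(i)}_\sigma|/\sqrt n\to 0$ in probability (indeed in $L^1$, by $\mathbf{C}_3$ and the uniform moment bound in the proof of Lemma~\ref{eq:fin_exit}); hence the sup-distance between the two scaled processes on $[0,T]$ tends to $0$ in probability, and the limits agree by Slutsky. For (i): specializing \eqref{eq:Walsch_probab} (second line) to this chain, $R(\sigma)$ of $\wh\cX$ is $R(\wh\sigma_1)+\xi^{(l(0))}_{\wh\sigma_1+1}$ in the language of $\cX$ — but actually one must be careful: when $\wh\cX$ overjumps, the radius at the exit time is $R(\wh\sigma_1)+\xi^{(l(0))}_{\wh\sigma_1+1}\le 0$; when it hits exactly $0$, the exit radius is $0$, which is also $R(\wh\sigma_1)+\xi^{(l(0))}_{\wh\sigma_1+1}$. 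So $R(\sigma)=R(\wh\sigma_1)+\xi^{(l(0))}_{\wh\sigma_1+1}$ in all cases, and substituting this into the second expression of \eqref{eq:Walsch_probab} gives exactly the claimed $p_k$.

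The main obstacle I anticipate is the bookkeeping in step (ii): one must verify that the difference between $\cX$'s radius and $\wh\cX$'s radius, accumulated over all excursions below $1$ up to time $nt$, is uniformly small after scaling. This is not entirely free because the number of such excursions up to time $nt$ is itself of order $\sqrt n$ (it is asymptotically proportional to the local time), so one needs the per-excursion discrepancy to be small \emph{on average}, not just individually; the uniform-in-$n$ bound $\sup_n \E|R_{\text{exit}}(n)|<\infty$ from the proof of Lemma~\ref{eq:fin_exit}, together with the fact that each discrepancy lasts only one time step and has size $O(|R_{\text{exit}}|)$, is what makes this work, via a maximal inequality for the associated martingale $L^n$ in \eqref{eq:L338}. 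A secondary, purely notational point is that the label process $l(n)$ in the statement must be read off $\wh\cX$ (it is the ray index, constant along each excursion above $0$), which is consistent since $\cX$ and $\wh\cX$ share the same label on $\cN^m$. Everything else is a direct transcription of the proof of Theorem~\ref{thm:Walsh_lim_pertRW}.
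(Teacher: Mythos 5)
Your overall strategy coincides with the paper's: unfold $\cX$ into a chain $\widehat\cX$ on $\cZ^m$ whose radius records the virtual non-positive value $R(k)+\xi^{(i)}_{k+1}$ with the label kept, check $\mathbf{A}_1$--$\mathbf{A}_4$ from $\mathbf{C}_1$--$\mathbf{C}_4$, apply Theorem \ref{thm:Walsh_lim_pertRW}, and identify the weights by substituting the unfolded exit radius $R(\sigma)=R(\widehat\sigma_1)+\xi^{(l(0))}_{\widehat\sigma_1+1}$ into \eqref{eq:Walsch_probab}; that part, including the weight identification (i), is correct. The genuine gap is in your transfer step (ii). You assert that on an overjump ``the original $R$ is set to $0$'' for one step while $\widehat R$ sits at the overshoot, so that the two radius processes run on the same time axis and differ only in value by the overshoot. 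That is not the model: by $\mathbf{C}_1$, when $R(k)+\xi^{(i)}_{k+1}<0$ the original chain relocates into the graph in a \emph{single} step and in general never visits $0$, whereas $\widehat\cX$ spends one extra step at the virtual negative value. Hence every overjump shifts the time parametrization of $\widehat\cX$ by one unit relative to $\cX$, and up to time $nt$ there are of order $\sqrt n$ such shifts (local-time scale), so the two processes drift apart in time rather than in value. A pointwise comparison ``one step is negligible and the overshoot is $o_P(\sqrt n)$'' therefore does not justify the sup-norm closeness you claim. The correct statement is $R(k)\1_{l(k)=i}\stackrel{\di}{=}\widehat R(\widehat\lambda^{-1}(k))\1_{\widehat l(\widehat\lambda^{-1}(k))=i}$ for the additive functional $\widehat\lambda(k)=\sum_{j\leq k}\1_{\widehat R(j)\geq 0}$, and what must be shown is that the removed time $[nt]-\widehat\lambda([nt])$, bounded by the number of virtual visits and hence by $\sqrt n\,L^n(t)\max_i v_i$ up to constants, is $o_P(n)$; one then concludes with a composition/time-change lemma (Corollary \ref{corl:composition ordinary}, i.e.\ the argument of Step 2 in the proof of Theorem \ref{thm:Skew_lim_pertRW}). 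This random time change is exactly how the paper closes the proof.

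Relatedly, your anticipated repair via ``a maximal inequality for the martingale $L^n$'' controls the wrong quantity: once the extra steps are removed by the time change, the per-excursion value discrepancy (the overshoot size) never enters the comparison at all; only the \emph{count} of removed steps matters, and its negligibility after scaling already follows from the convergence of $L^n$ established in Theorem \ref{thm:Walsh_lim_pertRW}, with no appeal to $\sup_n\E|R_{\text{exit}}(n)|<\infty$ needed. With step (ii) replaced by the time-change argument, the rest of your outline (verification of $\mathbf{A}_1$--$\mathbf{A}_4$, application of Theorem \ref{thm:Walsh_lim_pertRW}, and the computation of $p_k$) matches the paper's proof.
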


\section{Examples\label{s:exa}}
\begin{expl}
We work in the setting of Theorem \ref{thm:Skew_lim_pertRW}.
Consider perturbations of a simple symmetric random walk, i.e., assume that 
$\xi_+\stackrel{\di}{=}\xi_-\stackrel{\di}{=}\xi$, where $\Pb(\xi=\pm1)=\frac12$. Then    $v_\pm^2=1 $ and $\vf(x)=x$. 

\smallskip

\noindent
1.
Assume that $d=0$. Then the membrane consists of the single point $\{0\}.$ Notice that $X(\widetilde \sigma_1)=0$, so that
the stationary distribution $\pi_\text{exit}$ is trivial, $\pi_\text{exit}(0)=1$. Let the law of the jump from $0$
coincide with the law of a random variable $\eta$,  
\ba
\Pb(X(1)=j \, | \, X(0)=0)=\Pb(\eta=j), \ j\in\mbZ.
\ea
Recall that $\Pb(\eta=0)<1$ by assumption $\mathbf{B}_2$. Hence the 
entrance stationary distribution is equal to the distribution of the random walk at the moment of exit from $0$ 
\ba
\pi_\text{entrance}(j)=\Pb(X(\tau_0)=j \, | \, X(0)=0)=\frac{\Pb(\eta=j)}{\Pb(\eta\neq 0)}, \quad j\neq 0.
\ea
The permeability parameter $\gamma$ of the limit skew Brownian motion is 
\be
\label{eq:gamma_HS}
\gamma=\frac{\E \eta}{\E |\eta|}.
\ee
In particular, if $\Pb(X(1)=1\, | \, X(0)=0)=p$ and $\Pb(X(1)=-1\, | \, X(0)=0)=1-p$ or some $p\in(0,1)$
then
\be
\label{eq:gamma_HS1}
\gamma=2p-1.
\ee
Formulas \eqref{eq:gamma_HS1}, \eqref{eq:gamma_HS} appeared for the first time in the paper by Harrison and Shepp \cite{HShepp-81}
(formula \eqref{eq:gamma_HS} was only announced there).  

\noindent
2. Assume that $d\in \mbN$, but the random walk can exit from $\{-d,\dots,d\}$ only to the neighboring points $d+1$
or $-d-1$. Then the distribution $\pi_\text{entrance}$ is supported on the two point set $\{-d-1,d+1\}$.
Denote 
\ba
\label{e:45}
\alpha:=\Pb(X(\widetilde\tau_0)=d+1\, | \, X(0)=-d),\quad \beta:=\Pb(X(\widetilde\tau_0)=-d-1\, | \, X(0)=d).
\ea
The stationary distribution $\pi_\text{entrance}$ is calculated explicitly:
\ba
\pi_\text{entrance}(d+1)=\frac{\alpha}{\alpha+\beta},\quad  \pi_\text{entrance}(-d-1)=\frac{\beta}{\alpha+\beta}.
\ea
Notice that $X(\widetilde\sigma_1)=d$ if $X(0)=d+1$ and $X(\widetilde\sigma_1)=-d$ if $X(0)=-d-1$. Hence the permeability parameter 
$\gamma$ has the form
\ba
\label{:g47}
\gamma=\pi_\text{entrance}(d+1)-\pi_\text{entrance}(-d-1)=\frac{\alpha-\beta}{{\alpha+\beta}}. 
\ea
This result was already obtained in Pilipenko and Pryhod'ko \cite{pilipenko2012limit}.
\end{expl}

\begin{expl}
 We work in the setting of Theorem \ref{thm:Skew_lim_pertRW}.
Assume that a perturbed random walk on $\mbZ$ is such that 
$\Pb(\xi_+\geq -1)=\Pb(\xi_-\leq 1)=1$ and assume that $d\in\mbN$.
Then $\pi_\text{exit}$ is concentrated in two points  $\{-d\}$ and $\{d\}$. 
Denote
\ba
\widetilde \tau_0:= \inf\{k\geq 0\colon |X(k)|>d\}.
\ea
and
\ba
\alpha:=\Pb(X(\widetilde \tau_0)>0\, | \, X(0)=-d),\quad \beta:=\Pb(X(\widetilde \tau_0)<0\, | \, X(0)=d).
\ea
Then the stationary exit distribution $\pi_\text{exit}$ is
\ba
\pi_+:=\pi_\text{exit}(d)=\frac{\alpha}{\alpha+\beta}, \quad  \pi_-:=\pi_\text{exit}(-d )=\frac{\beta}{\alpha+\beta},
\ea
and the stationary entrance distribution $\pi_\text{entrance}$ is
\ba
\pi_\text{entrance}(x)= \pi_- \Pb(X(\widetilde \tau_0)=x \, | \, X(0)=-d) +\pi_+ \Pb(X(\widetilde \tau_0)=x \, | \, X(0)=d), \ |x|>d. 
\ea
Notice that $X(\widetilde \sigma_1)=d\, \sgn X(0)$  and $X(0)-X(\widetilde \sigma_1)= X(0)-d\, \sgn X(0) $ if $|X(0)|>d.$
Hence the parameter $\gamma$ from Theorem \ref{thm:Skew_lim_pertRW} equals
\ba
\gamma
=\frac{{\E}_{\pi_\text{entrance}} (X(0)-d\sgn X(0))v^{-1}_{\sgn X(0)}}
{{\E}_{\pi_\text{entrance}} |X(0)-d\sgn X(0)|v^{-1}_{\sgn X(0)} }.
\ea
In particular, if $v_+=v_-,$ then
\ba
 \gamma
=\frac{{\E}_{\pi_\text{entrance}} (X(0)-d\sgn X(0)) }
{{\E}_{\pi_\text{entrance}} |X(0)-d\sgn X(0)|  }.
\ea
If additionally \eqref{e:45} holds true, then 
\ba
\gamma=\frac{\alpha-\beta}{{\alpha+\beta}}. 
\ea
according to formula \eqref{:g47}.
\end{expl}

\begin{expl}
 \label{ex:4.3}
We work in the setting of Theorem \ref{thm:Walsh_lim_pertRW}.
Assume that  the random walk $\cX$ on $\cZ^m$, $m\geq 1$, is such that $\Pb( \xi^{(i)}=\pm1)=1/2$ and
\ba
\Pb(\cX(1)=(1,j)\, |\,  \cX(0)=(0,i))= q_{ij},
\ea
where $Q=\|q_{ij}\|$ is an irreducible stochastic matrix. We denote its stationary distribution (column vector)
by $\pi=(\pi_i)_{1\leq i\leq m}$ and recall that is the unique solution of the equation $Q^T\pi = \pi$.

Notice that the variances of the step sizes are equal to 1, $v_i^2 = 1$,
and the stationary distribution $\pi_\text{entrance}$
is supported on the set $\{(1,i)\}_{1\leq i\leq m}$. Moreover, $R(\sigma) = 0$, so
$\cX_\text{entrance}$, takes values in $\{1\} \times\{1, \ldots , m\}$
and has transition probabilities
\ba
\Pb(\cX_\text{entrance} (1) = (1, j) \,|\, \cX_\text{entrance}(0) = (1, i)) =
\Pb(\cX (1) = (1, j) \,|\, \cX (0) = (0, i)) = q_{ij} .
\ea
Therefore we have
\ba
\pi_\text{entrance}(1,i)=\pi_i,\quad 1\leq i\leq m.
\ea
Hence by \eqref{eq:Walsch_probab} the weights $\mathbf{p}=(p_1,\ldots,p_m)$ of the limit WBM
$W_\textbf{p}$ coincide with the stationary distribution $\pi=(\pi_1,\ldots,\pi_m)$.

Let $p_1, \ldots, p_m$ be arbitrary positive probabilities, $p_1 + \cdots + p_m = 1$.
Assume that $q_{ij} = p_j$ for all
$i, j = 1, \ldots , m$,
i.e., the choice of the next label is independent of the previous label. Then $\pi_i = p_i$,
$i = 1,\ldots , m$.
To the best of our knowledge, this convergence of a perturbed Markov chain to a WBM with
\emph{arbitrary} weights is a new result.
\end{expl}

 \begin{expl}
 \label{ex:4.4}
The model from Example \ref{ex:4.3} is related to that studied in Section 3 in Enriquez and Kifer
\cite{EnriquesKifer}.
Let us describe it in detail and show how it can be recovered from Theorem \ref{thm:Walsh_lim_pertRW}
and the methods used
in the proof. Instead of considering a planar spider graph consisting of $m$ rays, we will work on the space
$E^m$ defined in \eqref{e:Em}. Let $\{e_1 , \ldots , e_m\}$
be unit coordinate vectors and assume that $E^m$ is equipped with a
natural graph distance.
For $\ve>0$ consider a Markov chain $X_\ve=(X_\ve(k))_{k\geq 0}$ on $E^m$ whose jumps length are equal to $\ve $
and the transition probabilities are defined by the following rules:
\begin{enumerate} 
\item if $X_\ve(k)=r e_i$ and $r\geq \ve$, then $X_\ve(k+1)=(r\pm \ve) e_i $ with probability $1/2$;
\item if $X_\ve(k)=0$, then  $X_\ve(k+1)= \ve e_j$, $1\leq j\leq m$,  with probabilities $1/m$;
\item if $X_\ve(k)=r e_i$ and $r\in(0,\ve)$, then
\ba
X_\ve(k+1)=
\begin{cases}
(\ve-r)e_j,\ j\neq i,\\
(\ve+r)e_i,\ j=i,
\end{cases}
\ea
 with probability $q_{(j-i)\operatorname{mod} m}$, $1\leq i,j\leq m$,
 where $q_0,\ldots,q_{m-1}$ are positive probabilities,
 $q_0+\cdots +q_{m-1}=1$.
\end{enumerate}
Note that $X_\ve (k)$ can visit $0$ if and only if the initial value $X_\ve(0)$ belongs to the lattice
$(\ve\mathbb N^m )\cup \{0\}$.

Let a sequence of positive numbers $\{\ve_{n}\}$ be such that $\lim_{n\to\infty}\ve_n=0 $ and let
${X_{\ve_n}(0) }\Rightarrow x\in E^m$.

If $\ve^{-1}_n X_{\ve_n} (0) \in \mathbb N^m\cup \{0\}$ for every $n$, then Case 3 above never occurs, and by Theorem \ref{thm:Walsh_lim_pertRW} we have
convergence
\ba
\label{e:co1}
 {X_{\ve_n}(\ve_n^{-2} \cdot)}\Rightarrow W_{\textbf{p}}(\cdot,x),\  n\to\infty,
\ea
where $\textbf{p}=(\frac{1}{m},\dots,\frac{1}{m})$.

Assume now that the initial value
$\ve^{-1}_n X_{\ve_n} (0)$ does not belong to the lattice $\mathbb N^m\cup \{0\}$ for every $n$.
Then our
results are not applicable directly since the values $\ve^{-1}_n X_{\ve_n} (\cdot)$
are not integer. However, they are applicable
after some modifications.

We define a Markov chain $\cX^n = (R^n (k), l^n(k))_{k\geq 0}$
on $\mathcal Z^m = \mathbb Z \times \{1,\ldots , m\}$
as follows:
\ba
&\text{if } X_{\ve_n} (k) = r e_i \text{ with }r \in (x\ve_n, (x + 1)\ve_n)
\text{ for some }x\in\mathbb N \cup \{0\} \text{ and } i = 1, \ldots , m,\\
&\text{then } \cX^n(k) = (x, i).
\ea

It can be seen that all $\{\cX^n\}$ are Markov chains with possibly different initial conditions but with the
same transition probabilities
\ba
&\Pb(\cX^n(k + 1) = (x\pm 1, i)\, |\, \cX^n(k) = (x, i)) =\frac12,\quad \text{if } x \in \mathbb N,\\
&\Pb(\cX^n(k + 1) = (0, j)\, |\, \cX^n(k) = (0, i)) =q_{(i-j)\operatorname{mod} m},\quad \text{if } i\neq j,\\
\ea
 and
\ba
&\Pb(\cX^n(k + 1) = (1, i)\, |\, \cX^n(k) = (0, i)) =q_0.
\ea
Notice that the distance between $X^n(k):=\ve_n (R^n(k)\1_{l^n(k)=1},\dots, R^n(k)\1_{l^n(k)=m})$
and $X_{\ve_n}(k)$ does not exceed $\ve_n$. Hence, the limits of $\{X_{\ve_n}(\ve_n^{-2} \cdot)\}$ and
$\{X^n(\ve_n^{-2} \cdot)\}$ are equal if at least one of them exists.

Introduce the stopping times
\ba
\label{e:stop}
\sigma_1^n&=\inf\{k\geq 0\colon R^n(k)= 0\},\\
\tau^n_{\iota}&=\inf\{k>\sigma^n_\iota\colon  R^n(k)=1\}, \\
\sigma^n_{\iota+1}&=\inf\{k>\tau^n_\iota\colon  R^n(k)=0\},\ \iota\in\mbN,
\ea
and entrance and exit Markov chains
\ba
\cX^n_\text{exit}(\iota)&:= \cX^n(\sigma^n_\iota), \\
\cX^n_\text{entrance}(\iota)&:= \cX^n(\tau^n_\iota), \quad \iota\geq 1.
\ea
Note that Markov chains $\{\cX^n\}$ do not visit the set $\{\ldots , -2, -1\} \times \{1, \ldots , m\}$
at all. It is clear that the
entrance and exit Markov chains have unique stationary distributions $\pi_\text{entrance}$
and $\pi_\text{exit}$ concentrated on
the sets $\{1\} \times \{1, \ldots , m\}$ and $\{0\} \times \{1, \ldots , m\}$ respectively.
It follows from symmetry reasons that these
distributions are uniform. Therefore the probabilities $p_k$ defined in
\eqref{eq:Walsch_probab}
satisfy
\ba
p_k=
\frac{\E_{\pi_\text{entrance}} (R(0) - R(\sigma))\1_{l(0)=k}}
{\E_{\pi_\text{entrance}} (R(0) - R(\sigma))}
=\frac{\E_{\pi_\text{entrance}} (1 - 0)\1_{l(0)=k}}
{\E_{\pi_\text{entrance}} (1 -0)}
=\pi_\text{entrance}(1,k)=\frac{1}{m}.
\ea

The Markov chains $\{\cX^n \}$ make transitions from the set
$\{0\} \times \{1, \ldots , m\}$ to the same set $\{0\} \times \{1, \ldots , m\}$
which is not allowed by the assumptions of Theorem \ref{thm:Walsh_lim_pertRW}.
However if we skip such transitions, then the new
Markov chains will satisfy assumptions of Theorem
\ref{thm:Walsh_lim_pertRW}. Let us formally explain this construction.

We introduce a sequence $\{\lambda_n (k)\}_{k\geq 0}$ as follows
\ba
\label{e:426}
\lambda_n (0) &:= 0,\\
\lambda_n (k) &:=
k - \sum_{j=1}^k\1(\cX^n (j - 1)\in  \{0\} \times \{1, \ldots , m\}, \cX^n(j) \in \{0\} \times \{1, \ldots , m\} ) ,
\quad k \geq  1.
\ea
The sequence $\lambda^n$ is a time homogeneous additive functional of a Markov chain $\cX^n$
that counts all the steps
of $\cX^n$ except those that start and end within the set $\{0\} \times \{1, \ldots , m\}$.
The inverse mapping
\ba
\lambda_n^{-1} (k) := \inf\{j \geq  0\colon \lambda_n(j) \geq  k\},
\quad  k\geq  0,
\ea
is a sequence of stopping times, so that the process
$k \to \cX^n( \lambda_n^{-1} (k)) =: \widehat\cX^n(k) =: (\widehat R^n (k), \widehat l^n(k))$
is a time
homogeneous Markov chain with transition probabilities
\ba
\Pb( \widehat\cX^n(1) = (x \pm 1, i) \,|\, \widehat\cX^n(0)
 = (x, i)) = \frac12,\quad
 \text{if } x \in \mathbb N,
\ea
and
 \ba
\Pb( \widehat\cX^n(1) = (1, j) \,|\, \widehat\cX^n(0) = (0, i)) =
 \Pb( \cX^n(\tau^n_1) = (1, j) \,|\, \cX^n(0) = (0, i)) .
\ea
In particular this implies that $\widehat R^n$ is a Markov chain on $\mathbb N_0$
with transition probabilities
 \ba
\label{e:t1}
\Pb( \widehat R^n(1) = x \pm 1 \,|\, \widehat R^n(0) = x) = \frac12,\quad
 \text{if } x \in \mathbb N,
\ea
and
 \ba
 \label{e:t2}
\Pb( \widehat R^n(1) = 1 \,|\, \widehat R^n(0) = 0) = 1 .
\ea
Theorem \ref{thm:Walsh_lim_pertRW} gives us convergence
\ba
\label{e:co2}
\Big(\ve_n \widehat R^n(\ve_n^{-2}\cdot )\1_{\widehat l^n(\ve_n^{-2})=1},\ldots, \ve_n \widehat R^n(\ve_n^{-2}\cdot )\1_{\widehat l^n(\ve_n^{-2})=m}\Big)\Rightarrow W_{\textbf{p}}(\cdot,x) ,\quad  n\to\infty,
\ea
with the weights $\textbf{p}=(\frac{1}{m},\dots,\frac{1}{m})$.

Convergence \eqref{e:co1} in this case will follow from \eqref{e:co2} and forthcoming
Corollary \ref{corl:composition ordinary} if show that the
number of skipped steps is negligible, i.e., for each $t\geq 0$
\ba
\label{e:433}
\Big|\ve_n^2\lambda_n ([\ve_n^{-2}t])- t\Big|\stackrel{\Pb}{\to} 0,
\quad  n\to\infty.
\ea

Let
\ba
\widehat N^n(k) = \sum_{j=0}^k \1_{\widehat R^n(j) =0}
\ea
be the number of visits of $\widehat\cX^n$ to the set  $\{0\} \times \{1, \ldots , m\}$.
Since $\widehat R^n$
is a simple symmetric random walk on
$\mathbb N$ with reflection at $0$, see \eqref{e:t1} and \eqref{e:t2},
it is well known (see, e.g., Example 2 in Section 9 of Chapter 7
in Shiryaev \cite{Shiryaev-2-19}) that
 \ba
\ve_n^2\widehat N^n ([\ve_n^{-2}t])\stackrel{\Pb}{\to} 0,
\quad  n\to\infty.
\ea
Denote the time spent by $\cX^n$ in the set $\{0\} \times \{1, \ldots , m\}$  by
\ba
\zeta^n_\iota :=  \tau^n_\iota - \sigma^n_\iota,
\ea
see \eqref{e:stop}. The random variables $\{\zeta^n_\iota\}_{\iota\geq 0}$
are iid with the geometrical distribution with parameter $q_0$.

We recall \eqref{e:426} and estimate the number of transitions of $\cX^n$ inside of the set
$\{0\} \times \{1, \ldots , m\}$ as follows
\ba
k - \lambda_n (k)
\leq
\sum_{\iota=0}^{\widehat N^n(k)}
\zeta^n_\iota,
\ea
so that
\ba
\Big|\ve_n^2\lambda_n ([\ve_n^{-2}t])- t\Big|\leq \ve_n^2 + \ve_n^2\sum_{j=0}^{\widehat N^n([\ve_n^{-2}t])}
\zeta^n_j
\ea
and \eqref{e:433} is proved.

Summarizing all our findings, we get the convergence
\ba
 {  X_{\ve }(\ve^{-2} \cdot)}\Rightarrow W_{\textbf{p}}(\cdot,x),\quad \ve\to 0,
\ea
for any initial conditions such that $X_{\ve }(0)\Rightarrow x\in E^m$, and $\mathbf{p}=(\frac1m,\ldots,\frac1m)$.
\end{expl}

\begin{expl}
\label{ex:4.5}
In the work \cite{NgoPeigne19}, Ngo and Paign\'e considered a time homogeneous Markov chain $\{Y(k)\}_{k\geq 0}$ defined by 
\ba
\label{eq:Ngo}
Y(k+1)=\begin{cases}
Y(k)+\xi_{k+1}^+,&  \ \text{if}\ Y(k)>0\  \text{and} \ Y(k)+\xi_{k+1}^+> 0,\\
Y(k)+\xi_{k+1}^-,&  \ \text{if}\ Y(k)<0\  \text{and} \ Y(k)+\xi_{k+1}^-< 0,\\
\eta_{k+1}, & \ \text{if} \ Y(k)=0,\\
0, & \ \text{otherwise},
\end{cases}
\ea
where $\xi_k^+\stackrel{\di}{=}\xi_+$, $\xi_k^-\stackrel{\di}{=}\xi_-$, $\eta_k$, $k\geq 1$, are mutually independent random variables that are independent
of $Y(0)$. It is assumed that $\xi_-$ and $\xi_+$ are zero-mean integer-valued random variables with variances
$v^2_\pm=\operatorname{Var} \xi_\pm \in (0,\infty)$, and $\E|\eta|<\infty$.

Since $Y$ is forced to hit 0 at any attempt to jump over the origin, it does not satisfy the setting of 
Theorem \ref{thm:Skew_lim_pertRW}. However it can be fit into our theory with the help of a Markov chain 
$\cX(k)=(R(k), l(k))$, $k\geq 0$, on the state space $\mbZ\times\{-,+\}$ defined as follows.

We set
\ba
R(k+1)&=\begin{cases}
R(k)+\xi_{k+1}^+& \text{ if } l(k)=+ \text{ and } R(k)>0,\\
R(k)-\xi_{k+1}^- & \text{ if } l(k)=- \text{ and } R(k)>0,\\
|\eta_k| &  \text{ if }   R(k)\leq 0,
\end{cases}\\
l(k+1)&=\begin{cases}
l(k)&  \text{ if }   R(k)> 0,\\
+ &  \text{ if }   R(k)\leq 0 \text{ and } \eta_k \geq 0,\\
- &  \text{ if }   R(k)\leq 0 \text{ and } \eta_k < 0,
\end{cases}\\
R(0)&=|Y(0)|,\\
l(0)&=\begin{cases}
       +,\ \text{ if } Y(0)\geq 0,\\
       -,\ \text{ if } Y(0)< 0.
      \end{cases}
\ea
We have that $Y(k)=l(k)R(k)$ in each of the following three cases:
\ba
\Big\{Y(k)>0\  \text{and} \ Y(k)+\xi_{k+1}^+> 0\Big\}\text{ or }
\Big\{Y(k)<0\  \text{and} \ Y(k)+\xi_{k+1}^-< 0\Big\}\text{ or }
\Big\{Y(k)=0\Big\}.
\ea
Otherwise,
\ba
|Y(k)-l(k)R(k)|\leq |\xi^-_{k+1}| \vee|\xi^+_{k+1} |.
\ea
Since $\xi^\pm_{k}$ are square integrable we have
\ba
\frac{1}{\sqrt n} \max_{k\leq [nt]} \Big( |\xi^-_{k+1}| \vee|\xi^+_{k+1} |\Big)\stackrel{\P}{\to} 0,\quad n\to \infty, 
\ea
for any $t\geq 0$.

We apply Theorem \ref{thm:Walsh_lim_pertRW} to study the limit behaviour of $\cX$.
The stationary distribution $\pi_\text{entrance}$ of the Markov chain $\cX_\text{entrance}$ can be determined explicitly as 
\ba
\pi_\text{entrance}(x,+)&= \Pb(\eta=x \,|\, \eta\neq 0), \\
\pi_\text{entrance}(x,-)&= \Pb(\eta=-x \,|\, \eta\neq 0), \quad x\in\mbN. 
\ea
Let 
$\sigma=\inf\{k\geq 0\colon R(k)\leq 0\}$
be the time instant, when the radius  becomes non-positive for the first time. Denote by
\ba
f(x, \pm):= \E[R(\sigma)\, |\, (R(0),l(0))=(x,\pm)],\quad   x\in\mbN.
\ea
the expected value of the overshoot  over 0. Hence by the formula \eqref{eq:Walsch_probab}
\ba
\label{e:gp}
p_+
&=\frac{{\E}_{\pi_\text{\rm entrance}}(R(0)-R(\sigma))v_+^{-1}\1_{l(0)=+}}{{\E}_{\pi_\text{\rm entrance}}(R(0)-R(\sigma))v_{l(0)}^{-1}}
=\frac{\E (\eta-f(\eta,+))v_+^{-1}\1_{\eta>0}}
{\E (\eta-f(\eta,+))v_+^{-1}\1_{\eta>0}  +  \E (-\eta-f(-\eta,-))v_-^{-1}\1_{\eta<0}   },\\
p_-&=1-p_+.
\ea
Eventually, the Donsker scaling limit of $\varphi\Big(Y([n\cdot])/\sqrt n\Big)$, where $\varphi$ is defined in \eqref{e:phi},
is the skew Brownian motion with $\gamma =p_+-p_-,$ see Corollary \ref{thm:Walsh-skew}.

Our formula \eqref{e:gp} coincides with the result obtained in Ngo and Paign\'e \cite{NgoPeigne19} where the formula for $\gamma$ was characterized
in terms of descending ladder epochs 
and descending renewal functions
of the random walks with steps $\{\xi^+_k\}$ and  $\{\xi^-_k\}$, respectively.
\end{expl}

\begin{expl}
\label{ex:4.6}
We work in the setting of Theorem \ref{thm:Skew_lim_pertRW}  and assume that all the jumps starting from the membrane have mean value zero:
\ba
\E[X(1)\,|\,X(0)=x]=0,\quad |x|\leq d.
\ea
According to Corollary \ref{c:sobm}
the weak limit of 
$ \big(\frac{X([nt])}{\sqrt{n}} \big)_{t\geq 0}$ is a skew oscillating Brownian motion satisfying \eqref{e:Xinf}.
On the other hand, it is easy to see that the Markov chain $X=(X(k))_{k\geq 0}$ is a martingale,
 %so 
and that the limit skew oscillating Brownian motion 
is a martingale, too. Therefore the local time term in \eqref{e:Xinf} vanishes. This means that the limit  of 
$ \big(\frac{X([nt])}{\sqrt{n}} \big)_{t\geq 0}$ is the 
oscillating Brownian motion that solves the SDE
\begin{equation}
\di Y(t) = (v_-\1_{Y(t)< 0} + v_+\1_{Y(t)\geq  0})\,\di W(t).
\end{equation}
This convergence was established by Helland in the case of a one-point membrane, $d=0$, see Corollary 8.4. in \cite{helland1982convergence}.
\end{expl}

\section{Proof of Theorem \ref{thm:Walsh_lim_pertRW} \label{section2_3_2}}
 
The proof of Theorem \ref{thm:Walsh_lim_pertRW} is divided into seven steps. 
 
\noindent
\textbf{Step 1.}
First we introduce several objects related to the random walk $\cX(k)=(R(k), l(k))$, $k\geq 0$.

We say that the radial part $R$ is in the `normal' mode at time $k$ if $R(k)>0$ and is in the `critical' mode otherwise. 
Hence, for $k\geq 0$, we denote by
\ba
\label{e:Tnormal}
T_\text{normal}(k):=\sum_{i=0}^{k-1} \1_{R(i)\in \mbN}, 
\ea
the number of jumps  in the normal mode up to time $k$, and 
\ba
\label{e:Tcritical}
T_\text{critical}(k):=\sum_{i=0}^{k-1} \1_{R(i)\leq 0}=k- T_\text{normal}(k), 
\ea
the number of jumps  in the critical mode, where $\sum_{i=0}^{-1}:=0$.
We also consider the inverses
\ba
T_\text{normal}^{-1}(\iota)&:=\min\{k\geq 0\colon T_\text{normal}(k)\geq \iota \},\\ 
T_\text{critical}^{-1}(\iota)&:=\min\{k\geq 0\colon  T_\text{critical}(k)\geq\iota \},\quad \iota\geq 1. 
\ea
Also note that 
\ba
\sigma_{\iota}=T_\text{critical}^{-1}(\iota) -1,\quad \iota\geq 1,
\ea
is a moment of $k$-th exit of $R$ from $\mbN$,  and 
\ba
\tau_{\iota}=T_\text{critical}^{-1}(\iota),\quad \iota\geq 1, 
\ea
see \eqref{e:sigma} and \eqref{e:tau} for the definition of $\tau_\iota$, $\sigma_\iota$.

Define random sequences $V$ and $U$ as follows:
\begin{align}
V(0)&:=0, \quad U(0):=0, \\
V(\iota)-V(\iota-1)&:=
\frac{R(T_\text{normal}^{-1}(\iota))}{v_{l(T_\text{normal}^{-1}(\iota))}}   -\frac{R(T_\text{normal}^{-1}(\iota)-1)}{v_{l(T_\text{normal}^{-1}(\iota)-1)}}
\\
\label{e:vv}
&=
\frac{R(T_\text{normal}^{-1}(\iota) )-R(T_\text{normal}^{-1}(\iota)-1)}{v_{l(T_\text{normal}^{-1}(\iota)-1)}}\\
&=\frac{R(T_\text{normal}^{-1}(\iota) )-R(T_\text{normal}^{-1}(\iota)-1)}{v_{l(T_\text{normal}^{-1}(\iota))}},
\  \iota\geq 1,
\\
\label{e:diffU}
U(\iota)-U(\iota-1)&:=
\frac{R(T_\text{critical}^{-1}(\iota))}{v_{l(T_\text{critical}^{-1}(\iota))}}-\frac{R(T_\text{critical}^{-1}(\iota)-1)}{v_{l(T_\text{critical}^{-1}(\iota)-1)}}
=\frac{R(\tau_{\iota})}{v_{l(\tau_{\iota})}}-\frac{ R(\sigma_{\iota})}{v_{l(\sigma_{\iota})}},\  \iota\geq 1.
\end{align}
In \eqref{e:vv} we used that 
the label $l$ does not change as long as $R$ is in the normal mode.

Notice that
\ba
\frac{R(k)}{v_{l(k)}}-\frac{R(k-1)}{v_{l(k-1)}}&=
\Big(V(T_\text{normal}(k) )-V(T_\text{normal}(k)-1)\Big)\1_{R(k-1)>0}\\
&+ \Big(U(T_\text{critical}(k))-U(T_\text{critical}(k)-1)\Big)\1_{R(k-1)\leq 0}.
\ea
Hence, for $k\geq 0$
\ba
\label{e:RVU}
\frac{R(k)}{v_{l(k)}}
&=\frac{R(0)}{v_{l(0)}} 
+\sum_{i=1}^{k}  \frac{R(i)-R(i-1)}{v_{l(i)}}\1_{R(i-1)>0}
+\sum_{i=1}^{k} \Big( \frac{R(i)}{v_{l(i)}} - \frac{R(i-1)}{v_{l(i-1)}}\Big) \1_{R(i-1)\leq 0 } \\
&=\frac{R(0)}{v_{l(0)}}
+\sum_{\iota=1}^{T_\text{normal}(k)} (V(\iota)-V(\iota-1))
+\sum_{\iota=1}^{T_\text{critical}(k)} (U(\iota)-U(\iota-1)) \\
&=\frac{R(0)}{v_{l(0)}} +
V(T_\text{normal}(k)) + U(T_\text{critical}(k)).
\ea
Since $T_\text{normal}^{-1}(\iota)$ and $T_\text{critical}^{-1}(\iota)$, $\iota\geq 1$, are stopping times w.r.t.\ the filtration generated by 
$\cX$, the sequence $(V(k))_{k\geq 0}$ is a martingale. The 
sequences 
\ba
\Big(R(T_\text{critical}^{-1}(\iota)) , v_{l(T_\text{normal}^{-1}(\iota))}\Big)_{\iota\geq 0}\quad \text{and}\quad  
\Big(R(T_\text{critical}^{-1}(\iota) -1), v_{l(T_\text{critical}^{-1}(\iota) -1}\Big)_{\iota\geq 1}
\ea
are the embedded entrance and exit  Markov chains introduced in \eqref{e:EE}.

For $t\in[0,\infty)$ we
define
\ba
R(t)&:=R([t]),\quad 
V(t):=V([t]),\quad U(t):=U([t]), \quad l(t):=l([t]),\\
T_\text{normal}(t)&:= T_\text{normal}([t])+(t-[t]), \quad  T_\text{critical}(t):= t- T_\text{normal}([t])
\ea
so that
\ba
\frac{R(t)}{v_{l(t)}}=\frac{R(0)}{v_{l(0)}} +{V(T_\text{normal}(t))} + U(T_\text{critical}(t)).
\ea
Introducing similar notation for processes $\cX^n$ we get from \eqref{e:RVU} that
\be
\frac{R^n(nt)}{v_{l^n(nt)}\sqrt{n}}=\frac{R^n(0)}{v_{l^n(0)}\sqrt{n}} +\frac{V^n(T^n_\text{normal}(nt))}{\sqrt{n}} + \frac{U^n(T^n_\text{critical}(nt))}{\sqrt{n}}.
\ee
In the next steps we will show that the ``normal'' $V^n$-component converges to a Brownian motion whereas
the ``critical'' $U^n$-component converges to a local time process.
\begin{remk}
We have the equality
\ba
\frac{U^n(T^n_\text{critical}(nt))}{\sqrt{n}}= L^n(t),
\ea
where $L^n(\cdot)$ is defined in  \eqref{eq:L338}.
\end{remk}

\medskip
\noindent
\textbf{Step 2.} We prove the following Lemma.

\begin{lem}
\label{lem:conv:mart_Ct}
The following weak convergence
\be
\label{eq:conv_toW}
\Big(\frac{V^n(nt)}{\sqrt{n}}, \frac{U^n(\sqrt{n}t)}{\sqrt{n}}\Big)_{t\geq 0} 
\Rightarrow\Big( W(t), \mu t\Big)_{t\geq 0} , \quad n\to\infty,
\ee
holds true,
where $W$ is a standard Brownian motion, 
\ba
\label{e:mu}
\mu&=\E_{\pi_\text{\rm entrance}}\frac{R(0)}{v_{l(0)}}-\E_{\pi_\text{\rm exit}}\frac{R(0)}{v_{l(0)}}\\
&=\E_{\pi_\text{\rm exit}}\left(\frac{R(1)}{v_{l(1)}}-\frac{R(0)}{v_{l(0)}}\right)\\
&=\E_{\pi_\text{\rm entrance}}\left(\frac{R(0)}{v_{l(0)}}-\frac{R(\sigma)}{v_{l(\sigma)}}\right)\\
&=\E_{\pi_\text{\rm entrance}}\left(\frac{R(0)-R(\sigma)}{v_{l(0)}}\right)\in(0,\infty),
\ea
and $\sigma$ is defined in \eqref{e:ss}.
\end{lem}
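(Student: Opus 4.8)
The plan is to treat the two coordinates of \eqref{eq:conv_toW} separately and then combine them, exploiting that the second marginal limit is deterministic. First I would show $V^n(n\,\cdot\,)/\sqrt n\Rightarrow W$ by a martingale functional central limit theorem. By $\mathbf A_1$, as long as $R$ stays in the normal mode the label is frozen and $R$ makes one step of a centered random walk with increment $\xi^{(l)}$, so each increment in \eqref{e:vv} is, conditionally on the past, distributed as $\xi^{(i)}/v_i$ for the then-current label $i$; in particular it has conditional mean $0$ and conditional variance $1$ (the conditional variance being $1$ for every label, no matter which label an intervening one-step critical excursion selects). Hence $V$ is a martingale, as already noted after \eqref{e:RVU}, with predictable quadratic variation $\langle V\rangle_\iota=\iota$, so $\langle V^n\rangle_{[nt]}/n=[nt]/n\to t$; and the conditional Lindeberg condition holds because $\max_{1\le i\le m}\E\big[(\xi^{(i)})^2 v_i^{-2}\1_{|\xi^{(i)}|>\varepsilon v_i\sqrt n}\big]\to 0$ by finiteness of the variances and of the label set. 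A standard martingale functional CLT (see, e.g., \cite{Shiryaev-2-19}) then gives $V^n(n\,\cdot\,)/\sqrt n\Rightarrow W$ in $D[0,\infty)$.

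For the $U$-component I would invoke the ergodic theorem for the embedded chains. By \eqref{e:diffU} the increment $U(\iota)-U(\iota-1)=R(\tau_\iota)v_{l(\tau_\iota)}^{-1}-R(\sigma_\iota)v_{l(\sigma_\iota)}^{-1}$ is a measurable function of the pair $(\cX_\text{exit}(\iota),\cX_\text{entrance}(\iota))$, and it is strictly positive because $R(\sigma_\iota)\le 0<R(\tau_\iota)$, so $U$ is nondecreasing. The pair process $(\cX_\text{exit}(\iota),\cX_\text{entrance}(\iota))_{\iota\ge 1}$ is a Markov chain with the unique stationary law induced by $\pi_\text{exit}$ (Lemma \ref{lem:exit_entrance_stationary}), against which this increment is integrable by \eqref{eq:fin_exit}--\eqref{eq:fin_entrance} and $\min_i v_i>0$; its stationary mean is $\E_{\pi_\text{entrance}}[R(0)v_{l(0)}^{-1}]-\E_{\pi_\text{exit}}[R(0)v_{l(0)}^{-1}]$, and the chain of identities in \eqref{e:mu} follows from the exit/entrance duality \eqref{eq:exit-entrance} together with the fact that the label does not change between an entrance epoch and the next exit epoch, while $\mu\in(0,\infty)$ because $R(0)-R(\sigma)\ge 1$ under $\pi_\text{entrance}$ and $R(0),|R(\sigma)|$ are integrable. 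The Markov-chain ergodic theorem then yields $N^{-1}\sum_{\iota=1}^N(U(\iota)-U(\iota-1))\to\mu$, hence $U^n([\sqrt nt])/\sqrt n=\tfrac{[\sqrt nt]}{\sqrt n}\cdot\tfrac{1}{[\sqrt nt]}U^n([\sqrt nt])\to\mu t$ for each fixed $t$, and monotonicity of $U$ (a standard P\'olya/Dini argument) upgrades this to locally uniform convergence, hence to convergence in $D[0,\infty)$ toward the continuous deterministic limit $\mu(\cdot)$.

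The genuinely delicate point --- the main obstacle --- is that the initial conditions $\cX^n(0)$ depend on $n$, so the ergodic averages above must be controlled uniformly along a moving family of starting distributions, not merely for a single fixed start. I would resolve this with the uniform geometric recurrence estimate \eqref{eq:reccurence469}: the exit chain reaches the finite ``level-zero'' set $\{0\}\times\{1,\dots,m\}$ within a number of steps that is stochastically dominated, uniformly in $n$, by a geometric random variable, after which the ergodic theorem becomes uniform over the finitely many possible starting states; the remaining burn-in increments are finitely many, and \eqref{e:x} forces $R^n(0)=O_{\mathbb P}(\sqrt n)$ while \eqref{eq:est_mom_overshoot} (letting the slope $\alpha\downarrow 0$) and $\mathbf A_3$ propagate this into $R^n(\sigma_\iota^n)/\sqrt n\to 0$ and $R^n(\tau_\iota^n)/\sqrt n\to 0$ in probability for every fixed $\iota$, so their total contribution to $U^n([\sqrt nt])/\sqrt n$ is negligible.

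Finally, since $V^n(n\,\cdot\,)/\sqrt n\Rightarrow W$ and $U^n(\sqrt n\,\cdot\,)/\sqrt n$ converges weakly to the deterministic process $\mu(\cdot)$, the two converge jointly to $(W,\mu(\cdot))$: a weakly convergent sequence with a deterministic limit is asymptotically independent of any tight companion, and the continuity of both limit components makes the product-topology limit coincide with the limit in the Skorokhod space of $\mathbb R^2$-valued paths. This is precisely \eqref{eq:conv_toW}.
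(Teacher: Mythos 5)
Your proposal is correct and follows essentially the same route as the paper: reduction of joint convergence to marginal convergence because the second limit is deterministic, a martingale functional CLT for the $V$-component, and the ergodic theorem for the embedded exit/entrance chains for the $U$-component, with the $n$-dependent initial conditions handled via the uniform recurrence to the level-zero set \eqref{eq:reccurence469} together with the overshoot bound \eqref{eq:est_mom_overshoot} and $\mathbf{A}_3$. The only differences are cosmetic: you absorb the boundary effect into a random-length post-burn-in block and per-index $o_P(\sqrt n)$ estimates instead of the paper's three-term split with uniform-in-$k$ bounds \eqref{eq:small_first_jumps1033}--\eqref{eq:tau}, and you use a Dini-type argument for monotone convergence where the paper cites Jacod--Shiryaev.
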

\begin{proof}[Proof of Lemma \ref{lem:conv:mart_Ct}]
Since the second coordinate in of the limit in \eqref{eq:conv_toW} is non-random, it is sufficient to prove the 
coordinate-wise
convergence, see the generalization of Slutsky's theorem in Ressel \cite{ressel1982topological}. 

Convergence $\frac{V^n(nt)}{\sqrt{n}} \Rightarrow W(t)$, $n\to\infty$ follows from the functional central limit theorem for martingale differences,
see Theorem 18.2 in Billingsley \cite{billingsley2013convergence}.
Indeed, due to Assumption \textbf{A}$_1$ since $\operatorname{Var} \xi^{(j)}=v_j^2\in (0,\infty)$, for every $t\geq 0$ and $\ve>0$ the 
Lindeberg condition holds true, namely
\ba
\frac{1}{n}\sum_{k=1}^{[nt]} & \E \Big[(V^n(k)-V^n(k-1))^2\1_{|V^n(k)-V^n(k-1)|\geq \sqrt{n}\ve}\Big] 
\leq t\sum_{j=1}^m \E \Big[\frac{(\xi^{(j)})^2}{v_j^2}\1_{|\frac{|\xi^{(j)}|}{v_j}|\geq \sqrt{n}\ve}\Big]\to 0, n\to\infty.
\ea
Furthermore, for each $t\geq 0$
\ba
\frac{1}{n}\sum_{k=1}^{[nt] } \E (V^n(k)-V^n(k-1))^2= \frac{[nt]}{n}\to t,\quad n\to\infty,
\ea
what yields the result.

Now we prove the convergence of the second coordinate. 

1. First assume that the initial condition $\cX(0)$ does not depend on $n$.
Let the initial condition $\cX(0)=(R(0),l(0))$ be fixed. 
Recall the entrance and exit embedded Markov chains $\cX_\text{entrance}$ and $\cX_\text{exit}$
defined in 
\eqref{e:EE}. Representation \eqref{e:diffU} together with the ergodic theorem for Markov chains immediately imply 
that for each $t\geq 0$ the limit
\ba
\frac{U(\sqrt{n} t)}{\sqrt n}
= \frac{1}{\sqrt n}\sum_{k=1}^{[\sqrt n t]} \Big( U(k)-U(k-1)\Big)
=\frac{[\sqrt{n} t]}{\sqrt n}
\frac{1}{[\sqrt{n} t]}\sum_{k=1}^{[\sqrt n t]}\Big(\frac{R(\tau_k)}{v_{l(\tau_k)}} - \frac{ R(\sigma_k)}{v_{l(\sigma_k)}}\Big)
\to \mu t,\quad n\to\infty.
\ea 
holds almost surely.
The functional convergence follows from Theorem 2.15 c) (i) in Chapter VI of Jacod and Shiryaev \cite{JacodS-03} because all the functions are 
non-decreasing and the limit is continuous.

2. In the situation when the distribution of the initial value $\cX^n(0)$ depends on $n$, we deal with weak convergence in the the scheme of series.
Hence, certain modifications of the above argument have to be made. 

Consider the set $A:=\{0\}\times\{1,\dots,m\}$ and let $\theta^n:= \inf\{k\geq 1\colon R^n(\sigma^n_k)=0\}$. 
We write
\ba
\Pb\Big(\Big|\frac{1}{\sqrt{n}}\sum_{k=1}^{[\sqrt{n}t]}
&\Big(\frac{R^n(\tau^n_k)}{v_{l^n(\tau^n_k)}}-\frac{ R^n(\sigma^n_k)}{v_{l^n(\sigma^n_k)}}\Big)-\mu t\Big|>\ve\Big)\\
&\leq 
\Pb\Big(\Big|\frac{1}{\sqrt{n}}\sum_{k=1}^{\theta^n}
\Big(\frac{R^n(\tau^n_k)}{v_{l^n(\tau^n_k)}}-\frac{ R^n(\sigma^n_k)}{v_{l^n(\sigma^n_k)}}\Big) \Big|>\frac{\ve}{3}\Big)\\
&+
\Pb\Big(\Big|\frac{1}{\sqrt{n}}\sum_{k=\theta^n}^{[\sqrt n t ]+\theta^n}
\Big(\frac{R^n(\tau^n_k)}{v_{l^n(\tau^n_k)}}-\frac{ R^n(\sigma^n_k)}{v_{l^n(\sigma^n_k)}}\Big)-\mu t\Big|>\frac{\ve}{3}\Big)\\
&+
\Pb\Big(\Big|\frac{1}{\sqrt{n}}\sum_{k=[\sqrt n t]+1}^{[\sqrt n t ]+\theta^n}
\Big(\frac{R^n(\tau^n_k)}{v_{l^n(\tau^n_k)}}-\frac{ R^n(\sigma^n_k)}{v_{l^n(\sigma^n_k)}}\Big) \Big|>\frac{\ve}{3}\Big)\\
&=S_1^n+S_2^n+S^n_3.
\ea
It is easy to see that for any $\ve>0$ and $t>0$
\ba
S_2^n\leq 
\sup_{\text{Law}(\cX(0))\in M(A)} 
\lim_{n\to\infty}\Pb\Big(\Big|\frac{1}{\sqrt{n}}\sum_{k=1}^{[\sqrt{n}t]}
\Big(\frac{R(\tau_k)}{v_{l(\tau_k)}}-\frac{ R(\sigma_k)}{v_{l(\sigma_k)}}\Big)-\mu t\Big|>\frac{\ve}{3}\Big)=0.
\ea
It follows from \eqref{eq:reccurence469} that there is $N=N_\delta\in\mbN$ such that for all $n\geq 1$
\ba
\Pb(\theta^n>N)<\frac{\delta}{2}.
\ea
Hence we can estimate
\ba
S_1^n+S_3^n\leq \delta &+ \Pb\Big(\frac{1}{\sqrt{n}}\sum_{k=1}^{N}
\Big(\frac{|R^n(\tau^n_k)|}{v_{l^n(\tau^n_k)}}+\frac{ |R^n(\sigma^n_k)|}{v_{l^n(\sigma^n_k)}}\Big)>\frac{\ve}{3}\Big)\\
&+
\Pb\Big(\frac{1}{\sqrt{n}}\sum_{k=[\sqrt n t]+1}^{[\sqrt n t ]+N}
\Big(\frac{R^n(\tau^n_k)}{v_{l^n(\tau^n_k)}}+\frac{| R^n(\sigma^n_k)|}{v_{l^n(\sigma^n_k)}}\Big) >\frac{\ve}{3}\Big)\\
&\leq \delta 
+ \sum_{k=1}^{N} \Pb\Big(\frac{1}{\sqrt{n}} \frac{R^n(\tau^n_k)}{v_{l^n(\tau^n_k)}}>\frac{\ve}{6N}\Big)
+\sum_{k=1}^{N} \Pb\Big(\frac{1}{\sqrt{n}}\frac{ |R^n(\sigma^n_k)|}{v_{l^n(\sigma^n_k)}}> \frac{\ve}{6N} \Big)\\
&+ \sum_{k=[\sqrt n t]+ 1}^{[\sqrt n t]+N} \Pb\Big(\frac{1}{\sqrt{n}} \frac{|R^n(\tau^n_k)|}{v_{l^n(\tau^n_k)}}>\frac{\ve}{6N}\Big)
+\sum_{k=[\sqrt n t]+1}^{[\sqrt n t]+N} \Pb\Big(\frac{1}{\sqrt{n}}\frac{ |R^n(\sigma^n_k)|}{v_{l^n(\sigma^n_k)}}> \frac{\ve}{6N} \Big)\\
&\leq \delta 
+2N \sup_{k\geq 1} \Pb\Big( |R^n(\tau^n_k)|>\frac{\ve\sqrt{n}\min v_i}{6N}\Big)
+2N \sup_{k\geq 1} \Pb\Big( |R^n(\sigma^n_k)|>\frac{\ve\sqrt{n}\min v_i}{6N}\Big).
\ea
Therefore it is enough to prove that for any $c_1$, $c_2>0$
\begin{align}
\label{eq:small_first_jumps1033}
&\sup_{k\geq 1}\Pb(R^n(\sigma^n_k)\leq -c_1\sqrt n)<\frac{\delta}{8N},\\
\label{eq:tau}
&\sup_{k\geq 1}\Pb(R^n(\tau^n_k)>c_2\sqrt n)<\frac{\delta}{4N}
\end{align}
for $n$ large enough.

Let us show that for any $j=1,\dots,m$
\ba
\lim_{a\to\infty } \sup_{i\in\mathbb N} \Pb\Big(|R(\sigma)|\geq a\, \Big|\, R(0) =i, l(0)=j\Big) =0.
\ea
Let $\ve>0$. 
From \eqref{e:B} it follows that there are $i_0>0$ and $a_0>0$ such that
\ba
\sup_{i> i_0} \Pb\Big(|R(\sigma)|\, \geq a_0\, \Big|\, R(0) =i, l(0)=j\Big) \leq \ve.
\ea
Since $\sigma<\infty$ a.s.\ for any starting value $(i,j)$, there is $a_1>0$ such that
\ba
\max_{i=1,\dots, i_0} \Pb\Big(|R(\sigma)|\geq a_1\, \Big|\, R(0) =i, l(0)=j\Big) \leq \ve
\ea
and \eqref{eq:small_first_jumps1033} follows.

To demonstrate  \eqref{eq:tau} we note that
\ba
\Pb(R^n(\tau^n_k)>c_2\sqrt n)&=\sum_{x\leq 0,j=1,\dots,m} 
\Pb\Big(R^n(\sigma^n_k+1)>c_2\sqrt n\, \Big|\, R^n(\sigma^n_k)=x,l^n(0)=j\Big)\Pb(R^n(\sigma^n_k)=x, l^n(\sigma_k^n)=j)\\
&\leq \sum_{x\leq 0,j=1,\dots,m}\Pb\Big(R^n(\sigma^n_k+1)>c_2\sqrt n\, \Big|\, R^n(\sigma^n_k\Big)=x,l^n(0)=j)\Pb(R^n(\sigma^n_k)=x).
\ea
Let $C>0$ be the constant from \textbf{A}$_3$. Choose $c_1>0$ such that $Cc_1/c_2< \delta/(8N)$. Then we obtain
\ba\Pb(R^n(\tau^n_k)>c_2\sqrt n)
&\leq \sum_{x\leq 0,j=1,\dots,m}\Big(\frac{C(1+|x|)}{c_2\sqrt n}\wedge 1\Big) \Pb(R^n(\sigma^n_k)=x)\\
&\leq \sum_{x\leq -c_1\sqrt n,j=1,\dots,m}\Big(\frac{C(1+|x|)}{c_2\sqrt n}\wedge 1\Big) \Pb(R^n(\sigma^n_k)=x)\\
&+\sum_{-c_1\sqrt n<x\leq 0 ,j=1,\dots,m}\Big(\frac{C(1+|x|)}{c_2\sqrt n}\wedge 1\Big) \Pb(R^n(\sigma^n_k)=x)\\
&\leq \Pb(R^n(\sigma^n_k)\leq -c_1\sqrt n)+ \frac{C(1+c_1\sqrt n)}{c_2\sqrt n} \leq \frac{\delta}{4N}
\ea
for $n\to\infty$ by \eqref{eq:small_first_jumps1033}.
The Lemma is proven.
\end{proof}

\medskip
\noindent
\textbf{Step 3.} The main result of this Step is Theorem~\ref{thm:limit_W_L674} about convergence of the scaled radius process to the 
reflected Brownian motion.

We will use the following result on the deterministic convergence in the Skorokhod space 
which is a particular case of Theorem 1 from Pilipenko and Sarantsev
\cite{pilipenko2023boundary}.

\begin{thm}
\label{thm:PilipenkoSarantsev}
Assume that sequence of c\`adl\`ag functions $(f_n,g_n,h_n, T_{A_n}, T_{B_n})$ is such that
\begin{enumerate}
\item $f_n(t)=g_n(T_{A_n}(t))+ h_n(T_{B_n}(t))$, $t\geq 0$, where 
\be
T_{A_n}(t)=\int_0^t\1_{f_n(s)>0}\, \di s,  \quad 
T_{B_n}(t)=\int_0^t\1_{f_n(s)\leq 0}\, \di s
\ee
and the set $\{s\in[0,t]\colon f_n(s)>0\}$ consists of finitely many intervals for any $t\geq 0$;
\item $g_n(\cdot)\to g_0(\cdot)$ in $D([0,\infty),\mbR)$;
    \item there is a sequence $(\rho_n)$ of positive numbers such that  $\rho_n\to 0$ and  $h_n(\rho_n \cdot )\to h_0(\cdot)$ in $D([0,\infty),\mbR)$;
    \item $g_0$, $h_0$ are continuous functions and $h_0$ is strictly increasing;
    \item $g_0(0)\geq 0$, $h_n(0)=0$. 
\end{enumerate}
Then $(f_n)$ converges in $D([0,\infty),\mbR)$ to the solution of the Skorokhod reflection problem at zero for the function $g_0$,  i.e.,
\ba
f_n(t)\to g_0(t)+\max_{s\in[0,t]}(-g_0(s))\vee 0.
\ea
Moreover,
\ba
h_n(T_{B_n}(t))\to \max_{s\in[0,t]}(-g_0(s))\vee 0
\ea
in $D([0,\infty),\mbR)$.
\end{thm}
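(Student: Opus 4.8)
The plan is to derive Theorem~\ref{thm:PilipenkoSarantsev} from Theorem~1 of Pilipenko and Sarantsev~\cite{pilipenko2023boundary} by checking that hypotheses (i)--(v) above are an instance of the assumptions made there; I describe here the underlying deterministic mechanism, which is a Skorokhod reflection argument. The first step is to show that the ``critical clock'' $T_{B_n}$ accumulates only a negligible amount of macroscopic time, namely $T_{B_n}(t)\to 0$ for every $t\ge 0$, and more precisely $T_{B_n}(t)=O(\rho_n)$ uniformly on compact time intervals. Indeed, since $g_n\to g_0$ in $D$ and $g_0$ is continuous, $K_t:=\sup_n\sup_{u\le t}|g_n(u)|<\infty$. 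On every maximal interval on which $f_n\le 0$ the map $T_{A_n}$ is frozen at some value $c$ while $T_{B_n}$ increases with unit slope, so there $f_n(s)=g_n(c)+h_n(T_{B_n}(s))$; the constraint $f_n\le 0$ forces $h_n(v)\le K_t$ for every $v$ in the range of $T_{B_n}$ on $[0,t]$, i.e.\ for every $v\in[0,T_{B_n}(t)]$. Rescaling by $\rho_n$ and invoking (iii) together with the strict monotonicity of $h_0$ (which in the cases of interest grows to $+\infty$), this can hold only if $T_{B_n}(t)/\rho_n$ stays bounded, so $T_{B_n}(t)\to 0$.

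Consequently $T_{A_n}(t)=t-T_{B_n}(t)\to t$ uniformly on compacts, and since convergence in $D$ to a continuous limit is locally uniform, $g_n(T_{A_n}(\cdot))\to g_0(\cdot)$ locally uniformly. Setting $Y_n(t):=h_n(T_{B_n}(t))$ we have $f_n=g_n(T_{A_n}(\cdot))+Y_n$, so the task reduces to identifying the limit of $Y_n$. Because $T_{B_n}(t)=O(\rho_n)$, hypothesis (iii) says that on the relevant range $[0,T_{B_n}(t)]$ the function $h_n$ is within $o(1)$, locally uniformly, of the non-decreasing function $v\mapsto h_0(v/\rho_n)$; hence $Y_n$ is non-decreasing up to an $o(1)$ error, $Y_n(0)=h_n(0)=0$, and (because $T_{B_n}$ increases only where $f_n\le 0$) $Y_n$ increases only while $f_n\le 0$. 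The jumps of $f_n$ are inherited from those of $g_n$ and $h_n$ on these windows and hence are $o(1)$, since $g_0$ and $h_0$ are continuous; a short bookkeeping on a generic excursion of $f_n$ below zero then gives $\inf_{s\le t}f_n(s)\ge -o(1)$.

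These properties say exactly that $(f_n,Y_n)$ is an approximate solution of the Skorokhod reflection problem at $0$ with driver $\psi_n:=g_n(T_{A_n}(\cdot))$. Since the Skorokhod map $\Gamma$, $\Gamma(\psi)(t)=\psi(t)+\sup_{0\le s\le t}(-\psi(s))\vee 0$, is Lipschitz in the uniform norm and $\psi_n\to g_0$ locally uniformly, a standard estimate for approximate Skorokhod problems lets the errors vanish, yielding $f_n\to g_0(t)+\sup_{0\le s\le t}(-g_0(s))\vee 0$ and $Y_n(t)=h_n(T_{B_n}(t))\to\sup_{0\le s\le t}(-g_0(s))\vee 0$, both in $D([0,\infty),\mbR)$, which is the assertion. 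The main obstacle is the quantitative, uniform-on-compacts control behind the two ``$o(1)$'' claims above: one must establish that $T_{B_n}$ is genuinely of order $\rho_n$, so that (iii) is applied on the correct window, and bound the non-monotonicity of $h_n$ together with the depth of the downcrossings of $0$ by $f_n$ in terms of the moduli of continuity of $g_0$ and $h_0$. Once these are secured, the passage through $\Gamma$ is routine; alternatively, one simply invokes Theorem~1 of~\cite{pilipenko2023boundary} with $(g_0,h_0)$ in the roles prescribed there.
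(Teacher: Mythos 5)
The paper itself offers no proof of this statement: it is quoted as ``a particular case of Theorem 1 from Pilipenko and Sarantsev \cite{pilipenko2023boundary}'', so your fallback (``simply invoke Theorem~1 of \cite{pilipenko2023boundary}'') coincides with the paper's treatment. Note, however, that you never state the hypotheses of that theorem nor check that assumptions (i)--(v) are an instance of them, so that route is announced but not carried out, and what remains to judge is your direct sketch.

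The direct sketch has a concrete gap at its first and decisive step, the claim $T_{B_n}(t)=O(\rho_n)$. From $f_n\le 0$ on the down-intervals you correctly get $h_n(v)\le K_t$ for $v\in[0,T_{B_n}(t))$, but to deduce that $T_{B_n}(t)/\rho_n$ stays bounded you invoke that $h_0$ ``grows to $+\infty$''. Unboundedness of $h_0$ is not among the stated hypotheses (only continuity and strict monotonicity), and it is exactly what makes the contradiction work: if $\sup h_0\le \sup_{s\le t}(-g_0(s))$, the inequality $h_n(\rho_n u)\le K_t$ for $u< T_{B_n}(t)/\rho_n$ is perfectly compatible with $T_{B_n}(t)/\rho_n\to\infty$. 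Once that bound is lost, hypothesis (iii) --- which controls $h_n$ only on arguments of order $\rho_n$ --- says nothing about $h_n$ on the window $[0,T_{B_n}(t)]$ that the rest of your argument uses (approximate monotonicity of $Y_n=h_n(T_{B_n})$, smallness of its decrements, shallowness of the excursions of $f_n$ below $0$); for instance one may freeze $h_n$ at the value $h_n(B_n)$ for arguments beyond some $B_n$ with $B_n/\rho_n\to\infty$ without affecting (iii), and then the reflection mechanism stalls once the required push exceeds this cap and all of your $o(1)$ claims fail. So either a condition of the type $h_0(\infty)=\infty$ (true in the paper's application, where $h_0(t)=\mu t$) must be added, or it must be located among the hypotheses of the cited Theorem~1 and actually verified. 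Beyond this, you explicitly defer the uniform-on-compacts estimates (``the main obstacle''), which are precisely the substance of the statement; the concluding passage through the Lipschitz Skorokhod map is indeed routine once those estimates are in hand.
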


Let the processes $R^n$ and $L^n$ be from Theorem \ref{thm:Walsh_lim_pertRW}.
\begin{thm}
\label{thm:limit_W_L674}
We have convergence in distribution in $D([0,\infty),\mbR^2)$:
\ba
\Big(\frac{{R^n(nt)}}{v_{l^n(nt)}\sqrt{n}}, L^n(t)\Big)_{t\geq 0}\Rightarrow
 \Big(|\|x\|+W(t)|, L_t^0(|\|x\|+W|)\Big)_{t\geq 0}, \quad n\to\infty,
\ea
where $L^0(|\|x\|+W|)$ is the symmetric local time at 0 of the reflected Brownian motion $(|\|x\|+W(t)|)_{t\geq 0}$ and 
$x\in E^m$ is from \eqref{e:x}.
\end{thm}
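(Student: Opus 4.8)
The plan is to deduce the statement from the deterministic Skorokhod‑reflection result Theorem~\ref{thm:PilipenkoSarantsev}, fed by the martingale/ergodic convergence of Lemma~\ref{lem:conv:mart_Ct}, after transferring from weak to almost sure convergence by the Skorokhod representation theorem so that Theorem~\ref{thm:PilipenkoSarantsev} can be applied path by path. Concretely, put $f_n(t):=R^n(nt)/(v_{l^n(nt)}\sqrt n)$ and
\[
g_n(s):=\frac{R^n(0)}{v_{l^n(0)}\sqrt n}+\frac{V^n(ns)}{\sqrt n},\qquad
h_n(s):=\frac{U^n(ns)}{\sqrt n},\qquad
T_{A_n}(t):=\int_0^t\1_{f_n(u)>0}\,\di u,\qquad T_{B_n}(t):=t-T_{A_n}(t).
\]
Since $R^n$ is integer-valued and piecewise constant, $\1_{f_n(u)>0}=\1_{R^n([nu])\in\mbN}$, and a direct check with integer parts shows $[nT_{A_n}(t)]=T^n_\text{normal}([nt])$ and $[nT_{B_n}(t)]=T^n_\text{critical}([nt])$ \emph{exactly}, the fractional term $nt-[nt]\in[0,1)$ never displacing these integers. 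Hence \eqref{e:RVU}, applied to $\cX^n$ at time $[nt]$, reads precisely $f_n(t)=g_n(T_{A_n}(t))+h_n(T_{B_n}(t))$, and moreover $h_n(T_{B_n}(t))=U^n(T^n_\text{critical}(nt))/\sqrt n=L^n(t)$ as observed after \eqref{e:RVU}. Thus hypothesis~(1) of Theorem~\ref{thm:PilipenkoSarantsev} holds (the set $\{u\in[0,t]\colon f_n(u)>0\}$ is a finite union of intervals because $f_n$ is a step function with at most $[nt]+1$ steps on $[0,t]$), and so do $g_n(0)=R^n(0)/(v_{l^n(0)}\sqrt n)\ge0$ and $h_n(0)=0$, i.e.\ hypothesis~(5).

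Next I would supply the remaining hypotheses. By Lemma~\ref{lem:conv:mart_Ct}, $(V^n(n\cdot)/\sqrt n,\,U^n(\sqrt n\,\cdot)/\sqrt n)\Rightarrow(W,\mu\cdot)$; since $R^n(0)/(v_{l^n(0)}\sqrt n)=\sum_{i=1}^m X^n_i(0)\Rightarrow\sum_{i=1}^m x_i=\|x\|$ by \eqref{e:x} and the latter limit is a deterministic constant, Slutsky's theorem yields the joint convergence $\big(R^n(0)/(v_{l^n(0)}\sqrt n),\,V^n(n\cdot)/\sqrt n,\,U^n(\sqrt n\,\cdot)/\sqrt n\big)\Rightarrow(\|x\|,W,\mu\cdot)$. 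By the Skorokhod representation theorem we may assume these converge a.s.\ on one probability space; then $g_n\to g_0:=\|x\|+W$ a.s.\ in $D([0,\infty),\mbR)$, and with $\rho_n:=n^{-1/2}\to0$ we get $h_n(\rho_n\,\cdot)=U^n(\sqrt n\,\cdot)/\sqrt n\to h_0:=\mu\cdot$ a.s.\ in $D([0,\infty),\mbR)$; here $g_0$ is continuous, $h_0$ is continuous and strictly increasing because $\mu\in(0,\infty)$ by \eqref{e:mu}, and $g_0(0)=\|x\|\ge0$. Theorem~\ref{thm:PilipenkoSarantsev} then gives, a.s.\ in $D([0,\infty),\mbR)$,
\[
f_n(t)\ \longrightarrow\ g_0(t)+\max_{s\in[0,t]}(-g_0(s))\vee0=:\Gamma(g_0)(t),\qquad
L^n(t)=h_n(T_{B_n}(t))\ \longrightarrow\ \max_{s\in[0,t]}(-g_0(s))\vee0=:M(t),
\]
so that, both limits being continuous, $(f_n,L^n)\Rightarrow(\Gamma(g_0),M)$ in $D([0,\infty),\mbR^2)$.

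It remains to identify the limit. The process $g_0=\|x\|+W$ is a Brownian motion started at $\|x\|\ge0$, and the Skorokhod reflection of such a Brownian motion is a reflecting Brownian motion whose reflection functional $M$ is its symmetric semimartingale local time at $0$: this is L\'evy's theorem via Tanaka's formula when $\|x\|=0$, while for $\|x\|>0$ one splits at $\tau:=\inf\{t\colon g_0(t)=0\}$ — on $[0,\tau)$ one has $\Gamma(g_0)=g_0=|g_0|$ and $M\equiv0$, and from $\tau$ onward the strong Markov property reduces matters to the case of a Brownian motion started at $0$. Consequently $(\Gamma(g_0),M)$ has the law of $\big(|\|x\|+W|,\,L^0(|\|x\|+W|)\big)$, which is exactly the asserted limit.

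I expect the only delicate points to be, first, the bookkeeping that makes hypothesis~(1) of Theorem~\ref{thm:PilipenkoSarantsev} hold without remainder terms — i.e.\ that the combinatorial clocks $T^n_\text{normal},T^n_\text{critical}$ agree on the nose with the integrals $nT_{A_n},nT_{B_n}$ read off from $f_n$ itself — and, second, the clean identification of the Skorokhod-reflected Brownian motion and its regulator with reflecting Brownian motion and its symmetric local time, in particular the patching argument for a nonzero starting point. Both are routine once set up carefully; the analytically substantial input has already been isolated in Lemma~\ref{lem:conv:mart_Ct} and in the imported Theorem~\ref{thm:PilipenkoSarantsev}.
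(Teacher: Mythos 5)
Your proposal is correct and takes essentially the same route as the paper's proof: the decomposition \eqref{e:RVU}, Lemma~\ref{lem:conv:mart_Ct} combined with the Skorokhod representation theorem, an application of Theorem~\ref{thm:PilipenkoSarantsev} with exactly the same choice of $g_n$, $h_n$, $\rho_n=n^{-1/2}$, and the identification of the Skorokhod reflection of $\|x\|+W$ with reflected Brownian motion and its symmetric local time. The only (harmless) differences are cosmetic: you take $T_{A_n},T_{B_n}$ to be the exact occupation integrals of $f_n$ and check the identity without error terms, whereas the paper uses the clocks $T^n_{\text{normal}}(n\cdot)/n$, $T^n_{\text{critical}}(n\cdot)/n$; and for the final identification the paper simply cites \cite{pilipenko14book} where you sketch the standard Tanaka/strong Markov argument.
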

\begin{proof}
It follows from Lemma~\ref{lem:conv:mart_Ct} and
the Skorokhod representation theorem that there are copies $(\wt V^n(\cdot), \wt U^n(\cdot))\overset{\di}{=} (V^n(\cdot), U^n(\cdot))$ 
and $ \wt W  \overset{\di}{=}W$ defined on the same probability space such that we have a.s.\ locally uniform convergence
\ba
\Big(\frac{\wt V^n(nt)}{\sqrt{n}}, \frac{\wt U^n(\sqrt{n}t)}{\sqrt{n}}\Big)_{t\geq 0}\to (\wt W(t), \mu t)_{t\geq 0},\quad n\to\infty,
\ea
and 
\ba
\frac{\wt R^n(0)}{v_{\wt l^n(0)}\sqrt{n}}\to \|x\|,\quad n\to\infty \text{ a.s.},
\ea
where $x\in E^m$ is from \eqref{e:x}.

The functions $t\mapsto \wt T^n_\text{normal}(nt)$ and $t\mapsto \wt T^n_\text{critical}(nt)$ increase 
if $\frac{\wt R^n(nt)}{v_{\wt l^n(nt)}\sqrt{n}}$ belongs to $(0,\infty)$ and $(-\infty,0]$, respectively.
Hence Theorem \ref{thm:PilipenkoSarantsev} applied to the functions
\ba
f_n(t)&=\frac{\wt R^n(nt)}{\sqrt{n} v_{\wt l^n(nt)}},\quad  
g_n(t)=\frac{\wt R^n(0)}{v_{\wt l^n(0)}} + \frac{\wt V^n(nt)}{\sqrt{n}},\quad  h_n(t)=\frac{\wt U^n(n t)}{\sqrt{n}},\\
T_{A_n}(t) &= \frac{\wt T^n_\text{normal}(nt)}{n}, \quad   T_{B_n}(t)= \frac{\wt T^n_\text{critical}(nt)}{n},\quad  \rho_n=\frac{1}{\sqrt{n}},
\ea
yields that the term
\ba
\frac{\wt R^n(nt)}{\sqrt{n} v_{\wt l^n(nt)}}
=\frac{\wt R^n(0)}{v_{\wt l^n(0)}\sqrt{n}} +\frac{\wt V^n(\wt T^n_\text{normal}(nt))}{\sqrt{n}} + \frac{\wt U^n(\wt T^n_\text{critical}(nt))}{\sqrt{n}}
\ea
converges with probability one in $D([0,\infty),\mbR)$ to the solution of the Skorokhod reflection problem for the process 
$\|x\|+\wt W(t)$, $t\geq 0,$ i.e., to
\ba
\|x\|+\wt W(t) + \max_{s\in[0,t]}(-\|x\|-\wt W(s))\vee0.
\ea
Moreover, we have convergence 
\ba
\label{e:Urefl}
\frac{\wt U^n(\wt T^n_\text{critical}(nt))}{\sqrt{n}}\to \max_{s\in[0,t]}(-\|x\|-\wt W(s))\vee0,\ n\to\infty.
\ea
In particular we have that
\ba
\label{e:VW}
\frac{\wt V^n(\wt T^n_\text{normal}(nt))}{\sqrt{n}} \to \wt W(t).
\ea
It is well known that the law of the solution of the Skorokhod reflection problem coincides with the law of the reflected Brownian motion, see, e.g.,
\S 1.3 in Pilipenko~\cite{pilipenko14book}. This means that
\ba
\Big(\|x\|+\wt W(t) + \max_{s\in[0,t]}(-\|x\|-\wt W(s))\vee0, \max_{s\in[0,t]}(-\|x\|-\wt W(s))\vee0\Big)_{t\geq 0}
\stackrel{\di}{=}\Big(|\|x\|+W(t)|, L_t^0(|\|x\|+W|) \Big)_{t\geq 0},
\ea
and Theorem \ref{thm:limit_W_L674} is proven.
\end{proof}

\medskip
\noindent
\textbf{Step 4.} 
We continue proving Theorem \ref{thm:Walsh_lim_pertRW}. 
Similarly to Step 1, we decompose the radius process $R$ into a
sum of ``normal'' and ``critical'' components coordinate-wise. This will allow us to prove convergence of the
process $X^n$ defined in \eqref{e:329}.
First assume for simplicity that the initial value $X(0)$ does not depend on $n$.
Consider projections of $R$ on the 
line $\mbZ\times\{j\}$, $j=1,\dots,m$:
\ba
R_j(k)&:= R(k)\1_{l(k)=j}, \\
R(k)&:= R_1(k)+\dots+R_m(k),
\ea
Similarly to \eqref{e:Tnormal} we define
\ba
T_{\text{normal},j}(k):=\sum_{i=0}^{k -1} \1_{R_j(i)\in\mathbb N}, 
\ea
the number of jumps made from the positive side of $j$-th ray, as well as the inverse mapping 
\ba
T_{\text{normal},j}^{-1}(\iota):=\inf\{k\geq 0\colon T_{\text{normal},j}(k)\geq  \iota\}. %=\inf\{k\geq 0\ :\ T_{normal}(i)=k+1\};
\ea
\begin{align}
V_j(0)&=0,\quad U_j(0):=0, \\
V_j(\iota)-V_j(\iota-1)&:=
\frac{R_j(T_{\text{normal},j}^{-1}(\iota))}{v_{l(T_{\text{normal},j}^{-1}(\iota))}}   -\frac{R_j(T_{\text{normal},j}^{-1}(\iota)-1)}{v_{l(T_{\text{normal},j}^{-1}(\iota)-1)}}\\
&=
\frac{R_j(T_{\text{normal},j}^{-1}(\iota))-R_j(T_{\text{normal},j}^{-1}(\iota)-1)}{v_j},\quad  \iota\geq 1.
\end{align}

Note that $T_{\text{normal},j}^{-1}(\iota)-1$
is a stopping time with respect to the natural filtration of the Markov chain
because for any $k\in\mathbb N_0$ we have
\ba
\{T_{\text{normal},j}^{-1}(\iota)-1\leq k\}
=\{T_{\text{normal},j}^{-1}(\iota)\leq k+1\}
=\{T_{\text{normal},j}(k+1)\geq \iota\}
=\Big\{\sum_{i=0}^k \1_{R_j(i)\in\mathbb N}\geq \iota\Big\}.
\ea
Observe that if $\{T_{\text{normal},j}^{-1}(\iota)-1= k\}$
then $R_j (T_{\text{normal},j}^{-1}(\iota)-1) = R_j (k) \in\mathbb N$. It follows from the strong
Markov property and the assumptions of the Theorem that $(V_j (\iota))_{\iota\geq 0}$, $1\leq j \leq m$,
are independent random
walks, which increments have distributions of $\frac{\xi^{(j)}}{v_j}$ respectively.

%
% It follows from the assumptions of the Theorem that $(V_j(\iota))_{\iota\geq 0}$, $1\leq j\leq m,$ are  independent random walks, which increments
% have distributions of $\frac{\xi^{(j)}}{v_j}$ respectively.
%
%
%

Observe that
\ba
\frac{R_j(k)}{v_j}&=\frac{R_j(k)\1_{l(k)=j}}{v_{l(k)}}\\
&=\frac{R(0)\1_{l(0)=j}}{v_j} 
+\sum_{i=1}^{k} \Big( \frac{R(i)\1_{l(i)=j}}{v_j}-\frac{R(i-1)\1_{l(i-1)=j}}{v_j}\Big)\1_{R(i-1)>0}\\
&+\sum_{i=1}^{k} \Big( \frac{R(i)\1_{l(i)=j}}{v_j} - \frac{R(i-1)\1_{l(i-1)=j}}{v_j}\Big) \1_{R(i-1)\leq 0 } \\
&=\frac{R_j(0)}{v_j} 
 +\sum_{\iota=1}^{T_{\text{normal},j}(k)} (V_j(\iota)-V_j(\iota-1))\\
&+\sum_{i=1}^{k} \Big( \frac{R(i)\1_{l(i)=j}}{v_j} - \frac{R(i-1)\1_{l(i-1)=j}}{v_j}\Big) \1_{R(i-1)\leq 0 } \\
\ea
Note that
\ba
R(i-1)\leq 0\quad \Leftrightarrow \quad i-1=\sigma_\iota \quad \text{for some $\iota$} 
\quad \Leftrightarrow \quad i=\tau_\iota \quad \text{for some $\iota$} 
\ea
and 
\ba
\tau_\iota\leq k \quad \Leftrightarrow \quad \sigma_\iota\leq k \quad \Leftrightarrow \quad \iota\leq  T_{\text{critical}}(k).
\ea
Therefore,
\ba
\label{e:rrr}
\frac{R_j(k)}{v_j}&
=\frac{R_j(0)}{v_j} +  V_j(T_{\text{normal},j}(k)) + U_j(T_{\text{critical}}(k)),
\ea
where
\ba
U_j(l)
=\sum_{\iota=1}^{l} \Big( \frac{R(\tau_\iota)}{v_j} \1_{l(\tau_\iota)=j}
- \frac{R(\sigma_\iota)}{v_j} \1_{l(\sigma_\iota)=j}\Big).
\ea
The sequences $k\mapsto U_j( T_{\text{critical}}(k))$
are non-decreasing for all $j=1,\dots,m$.

Notice that $V(\iota)$ and $U(\iota)$, $\iota\geq 1$, defined in \eqref{e:vv} and \eqref{e:diffU} satisfy the property 
\ba
\label{e:VT}
V(T_{\text{normal}}(k))= V_1(T_{\text{normal},1}(k))+\dots + V_m(T_{\text{normal},m}(k)),\\
\ea
\be
\label{eq:U_crit_sum895}
 U(T_{\text{critical}}(k)) = U_1(T_{\text{critical}}(k))+\dots + U_m(T_{\text{critical}}(k)).
\ee
When the initial value $X(0)$ depends on $n$, similarly to notations above we 
define the sequences $R^n_j$, $V^n_j$, $U^n_j$, etc.

\medskip
\noindent
\textbf{Step 5.} 
To prove convergence, we will utilize the martingale characterization of the WBM.
First, we show that the sequence 
\ba
\label{e:tri}
\Big\{\Big(\frac{R^n_j(n\cdot)}{\sqrt{n}}, \frac{ V_j^n(T^n_{\text{normal},j}(n\cdot))}{\sqrt{n}}, 
\frac{U^n_j(T^n_{\text{critical}}(n\cdot))}{\sqrt{n}}\Big)_{1\leq j\leq m}\Big\}_{n\geq 1} 
\ea
is weakly relatively compact in $D([0,\infty),\mbR^{3m})$ and its limit is continuous.

Due to \eqref{e:Urefl}, the sequence of processes $\{\frac{U^n(T^n_{\text{critical}}(n\cdot))}{\sqrt{n}}\}_{n\geq 1}$
converges in distribution to a continuous non-decreasing process. Equation \eqref{eq:U_crit_sum895} implies that the modulus of continuity of 
each non-decreasing process $t\mapsto U^n_j(T^n_{\text{critical}}(nt))/\sqrt n$ is dominated by the modulus of continuity of $t\mapsto U^n(T^n_{\text{critical}}(nt))/\sqrt{n}$.
Hence this sequence is  weakly relatively compact, and any  limit process  
 $\widehat U^\infty=(\widehat U^\infty_1(t),\dots, \widehat U^\infty_m(t))_{t\geq 0}$,
 is continuous in $t$  and each coordinate $t\mapsto \widehat U^\infty_j(t)$ is non-decreasing.
 
 By Donsker's theorem,
 \ba
\Big( \frac{ V^n_j(n\cdot )}{\sqrt{n}},\dots, \frac{ V^n_m(n\cdot)}{\sqrt{n}}\Big)\Rightarrow \Big(B_1(t),\dots, B_m(\cdot)\Big),\ n\to\infty,
 \ea
 where $B_1,\dots, B_m$ are independent Brownian motions. Since
 $T^n_{\text{normal},j}(t)-T^n_{\text{normal},j}(s)\leq t-s$, for $0\leq s\leq t$, the sequence of processes 
 $\{(\frac{  V^n_j(T^n_{\text{normal},j}(n\cdot ))}{\sqrt{n}})_{1\leq j\leq m}\}_{n\geq 1}$
 is weakly relatively compact, too, and any limit point  
 $M^\infty=(M^\infty_1(t),\dots, M^\infty_m(t))_{t\geq 0}$ is a continuous process.
Weak relative compactness of $\{\frac{R^n_j(n\cdot)}{ \sqrt{n}}\}_{n\geq 1}$ follows 
from \eqref{e:rrr}. Since all limits $M_j^\infty$ and $\widehat U_j^\infty$ are continuous,
the limits $R_j^\infty$ are also continuous, and the sequence \eqref{e:tri} is tight in $D([0,\infty),\mbR^{3m})$.

We have
\ba
\frac{{R^n(nt)}}{v_{l^n(nt)}\sqrt{n}}
=\sum_{j=1}^m \frac{{R^n_j(nt)}}{v_j\sqrt{n}}
&= \sum_{j=1}^m \frac{{R^n_j(0)}}{v_j\sqrt{n}}
+  \sum_{j=1}^m \frac{ V_j^n(T^n_{\text{normal},j}(nt))}{\sqrt{n}}
+  \sum_{j=1}^m 
\frac{U^n_j(T^n_{\text{critical}}(nt))}{\sqrt{n}}\\
&=\frac{{R^n(0)}}{v_{l^n(0)}\sqrt{n}} + \frac{ V^n(T^n_{\text{normal}}(nt))}{\sqrt{n}}
+ \frac{U^n(T^n_{\text{critical}}(nt))}{\sqrt{n}}.
\ea
Recall that by Theorem \ref{thm:limit_W_L674}, see \eqref{e:Urefl} and \eqref{e:VW},
$\frac{ V^n(T^n_{\text{normal}}(n\cdot ))}{\sqrt{n}}$ converges to a Brownian motion $W(\cdot)$,
$\frac{{R^n(nt)}}{v_{l^n(nt)}\sqrt{n}}$ converges to a reflected Brownian motion 
\ba
\Big(\|x\|+  W(t) + \max_{s\in[0,t]}(-\|x\|- W(s))\vee0\Big)_{t\geq 0},
\ea
and 
$\frac{U^n(T^n_{\text{critical}}(n\cdot))}{\sqrt{n}}$ converges to its local time at $0$ that will be denoted by $\nu(\cdot)$.

Hence, any condensation point
\ba
(R^\infty_j(\cdot), M^\infty_j(\cdot), \widehat U^\infty_j(\cdot))_{1\leq j\leq m} 
\ea
of \eqref{e:tri}
is a continuous process such that
\begin{enumerate}
\item
\begin{align}
\label{e:W}
\sum_{j=1}^m M^\infty_j(t)&=W(t)\\
\label{e:nu}
\sum_{j=1}^m \widehat U^\infty_j(t)&=\nu(t)
,\quad t\geq 0;
\end{align}
\item 
for each $1\leq j\leq m$
\ba
X^\infty_j(t):=\frac{R^\infty_j(t)}{v_j}=  x_j + M^\infty_j(t)+ \widehat U^\infty_j(t),\quad t\geq 0;
\ea
\item $t\mapsto \widehat U^\infty_j(t)$ are non-decreasing for $t\geq 0$, $\widehat U^\infty_j(0)=0,$ for each $j=1,\dots,m$.
\end{enumerate}
Since the process
\ba
\label{e:RBM}
\sum_{j=1}^m X^\infty_j(t),\quad t\geq 0,
\ea
is a 
reflected Brownian motion 
whose local time at 0 is $\nu(\cdot)$ we have
\ba
\int_0^\infty\1_{\sum_{j=1}^m X^\infty_j(t)>0} \,\di \nu(t) =0 \mbox{ and } \int_0^\infty\1_{\sum_{j=1}^m X^\infty_j(t)=0}\,\di t=0
\ea
with probability 1

Also note that $X^\infty_i(t)X^\infty_j(t)=0$ for every $i\neq j$ and $t\geq 0$. Since $\sum_{j=1}^m X^\infty_j(t)\geq 0$ we conclude that 
all $X^\infty_j(t)\geq 0$, $j=1,\dots,m$.

\medskip
\noindent
\textbf{Step 6.}
It is left to demonstrate that any condensation point $(X^\infty_j(\cdot))_{1\leq j\leq m}$ 
is a WBM with weights $p_1,\dots,p_m$ and the starting point $x$.
Without loss of generality, we assume that the sequence \eqref{e:tri} weakly converges to the condensation point
itself.

Let us check conditions of the martingale characterization of the Walsh Brownian motion, see Theorem \ref{thm:Walsh}. 
It remains to verify the following:
\begin{enumerate}
\item for any $j=1,\dots,m$ and $t\geq 0$
\be
\label{eq:local_times_LLN945}
\frac{\widehat U^\infty_j(t)}{\nu(t)}= \frac{\widehat U^\infty_j(t)}{\sum_{l=1}^m \widehat U^\infty_l(t)}  =p_j  \ \mbox{a.s.},
\ee
where $p_j$ are specified in \eqref{eq:Walsch_probab};
\item for any $j=1,\dots,m$ the process $M^\infty_j$  is a  continuous square integrable martingale 
with respect to the filtration generated by $(X^\infty,M^\infty, \widehat  U^\infty)$ with the predictable quadratic variation
\ba
\lg M^\infty_j\rg_t =\int_0^t\1_{X^\infty_j(s)>0}\, \di s.
\ea
\end{enumerate}

By the Skorokhod representation theorem, see Theorem 6.7 in Billingsley \cite{billingsley2013convergence},
we may assume without loss of generality that the sequence \eqref{e:tri} is defined on the same probability space and that with probability 1 
convergence 
\ba
\Big(\frac{R^n_j(nt)}{v_j\sqrt{n}}, \frac{V^n_j(T^n_{\text{normal},j}(nt))}{\sqrt{n}}, 
\frac{U^n_j(T^n_{\text{critical}}(nt))}{\sqrt{n}}\Big)_{1\leq j\leq m} 
\to (X^\infty_j(t), M^\infty_j(t), \widehat U^\infty_j(t))_{1\leq j\leq m} ,\quad n\to\infty,
\ea 
holds true locally uniformly in $t$.

1. We demonstrate \eqref{eq:local_times_LLN945}. 
Recall that \eqref{eq:conv_toW} in
Lemma \ref{lem:conv:mart_Ct} imply that
\ba
\sum_{l=1}^m \frac{U^n_l(\sqrt nt)}{\sqrt n} \to
\mu t ,\quad n\to\infty,
\ea
where $\mu$ is defined in \eqref{e:mu}.
By \eqref{e:Urefl} and \eqref{e:nu} we have:
\ba
\label{e:UU}
\sum_{l=1}^m\frac{ U^n_l\Big(\sqrt n\frac{T^n_{\text{critical}}(nt)}{\sqrt n}\Big)}{\sqrt n}
=\sum_{l=1}^m\frac{ U^n_l(T^n_{\text{critical}}(nt))}{\sqrt n} 
 \to 
\sum_{l=1}^m \widehat U^\infty_l(t)=\nu(t) 
,\quad n\to\infty,
\ea
with probability 1 locally uniformly in $t$. Therefore for any $t>0$ 
\ba
\label{e:572}
\frac{T^n_{\text{critical}}(nt)}{\sqrt n} \to \frac{\nu(t)}{\mu},\quad n\to\infty.
\ea
Analogously to the proof of \eqref{eq:conv_toW} in
Lemma \ref{lem:conv:mart_Ct} we obtain that
\ba
\label{e:573}
\frac{U^n_j(\sqrt nt)}{\sqrt n} \to
p_j\mu t ,\quad n\to\infty,
\ea
in probability locally uniformly in $t$, where $p_j$ is defined in \eqref{eq:Walsch_probab}.
With the help of \eqref{e:572} and \eqref{e:573} we conclude that
\ba
\label{e:Uj}
 \frac{U^n_j(T^n_{\text{critical}}(nt))}{\sqrt n}= \frac{ U^n_j\Big(\sqrt n\frac{T^n_{\text{critical}}(nt)}{\sqrt n}\Big)}{\sqrt n}
\to \widehat U_j^\infty(t)= p_j\nu(t).
 \ea
The limit \eqref{eq:local_times_LLN945} follows from \eqref{e:UU} and \eqref{e:Uj}.

2. To  study martingale properties of $M^\infty$ we need an auxiliary result about limits of local martingales.

Let $(\cF_t^\infty)_{t\geq 0}$ be the natural filtration generated by 
$(X^\infty_j(t), M^\infty_j(t), \widehat U^\infty_j(t))_{1\leq j\leq m}$, $t\geq 0$. 
It is easy to see that $W$ defined in \eqref{e:W}
is a $(\cF_t^\infty)$-Brownian motion. To complete the proof of the Theorem it suffices to show that 
\begin{equation}
\label{eq:w_j=int}
M^\infty_j(t)=\int_0^t\1_{X^\infty_j(s)>0}\, \di W(s)\ \ \text{a.s.},
\end{equation}
for any $t\geq 0$.
Note that
\ba
\frac{V^n_j(T^n_{\text{normal}, j}(nt))}{\sqrt{n}} = \int_0^t\1_{\frac{R^n_j(ns-)}{v_j\sqrt{n}}>0 }\, \di \frac{V^n(T^n_\text{normal}(ns))}{\sqrt{n}},
\ea
where this integral is understood as a stochastic integral w.r.t.\ to the square integrable martingal 
$(\frac{V^n(T^n_\text{normal}(nt))}{\sqrt{n}},\cF_t^n)_{t\geq 0}$, 
where $\cF^n_t=\sigma\{R^n_j(ns), V^n_j(T^n_{\text{normal},j}(ns)), \widehat U^n_j(ns),\, j=1,\dots, m,\, s\leq t\}$.

\medskip
\noindent
\textbf{Step 7.}
The proof of the Theorem will follow from the following Lemma. 

\begin{lem}
For all $j=1,\dots, m$ and $t\geq 0$ we have
\ba
\label{e:sko}
\lim_{n\to\infty}\E \Big|\int_0^t\1_{\frac{R^n_j(ns-)}{v_j\sqrt{n}}>0 }\, \di \frac{V^n(T^n_\text{\rm normal}(ns))}{\sqrt{n}} 
- \int_0^t\1_{X^\infty_j(s)>0}\, \di W(s)\Big|^2=0.
\ea
\end{lem}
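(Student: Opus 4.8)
The plan is to reduce \eqref{e:sko} to an occupation-time estimate for the limiting reflected Brownian motion, after replacing the discontinuous integrand $\1_{\{\,\cdot\,>0\}}$ by continuous approximants. Work on the probability space furnished by the Skorokhod representation of Step~6, so that, writing $\bar M^n(t):=\frac{V^n(T^n_\text{normal}(nt))}{\sqrt n}$, we have a.s.\ locally uniform convergences $\bar M^n\to W$, $R^n_j(n\cdot)/(v_j\sqrt n)\to X^\infty_j$ and $R^n(n\cdot)/\sqrt n\to R$, where $R:=|\,\|x\|+W\,|$ is a reflected Brownian motion. Recall that $(\bar M^n,\cF^n)$ is a square integrable martingale with $\di\langle\bar M^n\rangle_s\le\di s$ and $\langle\bar M^n\rangle_t=\frac{T^n_\text{normal}(nt)}{n}\to t$, the latter because $\frac{T^n_\text{critical}(nt)}{n}\to0$ by \eqref{e:572}. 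For $\delta>0$ fix a continuous $\psi_\delta\colon\mbR\to[0,1]$ with $\psi_\delta\equiv0$ on $(-\infty,0]$, $\psi_\delta\equiv1$ on $[\delta,\infty)$ and Lipschitz constant $1/\delta$, and split the difference in \eqref{e:sko} as $I^{n,\delta}+J^{n,\delta}+K^\delta$, where $I^{n,\delta}:=\int_0^t\bigl(\1_{R^n_j(ns-)/(v_j\sqrt n)>0}-\psi_\delta(R^n_j(ns-)/(v_j\sqrt n))\bigr)\,\di\bar M^n(s)$, $K^\delta:=\int_0^t\bigl(\psi_\delta(X^\infty_j(s))-\1_{X^\infty_j(s)>0}\bigr)\,\di W(s)$, and $J^{n,\delta}$ carries the fixed continuous integrand $\psi_\delta$.

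The terms $I^{n,\delta}$ and $K^\delta$ are handled by the It\^o isometry and the bound $(\1_{y>0}-\psi_\delta(y))^2\le\1_{0<y<\delta}$. Since $R^n_j(k)>0$ forces $l^n(k)=j$, hence $R^n_j(k)=R^n(k)$, we get $\E|I^{n,\delta}|^2\le\E\int_0^t\1_{0<R^n(ns-)/(v_j\sqrt n)<\delta}\,\di s$; dominating the indicator by a continuous $\chi_\delta$ with $\1_{(0,\delta)}\le\chi_\delta\le\1_{(-1,2\delta)}$ and using the a.s.\ locally uniform convergence $R^n(n\cdot)/\sqrt n\to R$ together with bounded convergence, $\limsup_n\E|I^{n,\delta}|^2\le\E\int_0^t\1_{0\le R(s)<2\delta v_j}\,\di s$. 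As $R$ is a reflected Brownian motion, the occupation-times formula bounds the last expectation by $C_t\delta$, where $C_t$ depends only on $t$ and $v_j$ through $\E\sup_a L^a_t(\|x\|+W)<\infty$. Since at each time at most one ray is active, $X^\infty_j(s)>0$ forces $R(s)=X^\infty_j(s)$, so the analogous (limit-free) computation gives $\E|K^\delta|^2\le\E\int_0^t\1_{0<R(s)<\delta}\,\di s\le C_t\delta$.

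For the term $J^{n,\delta}$ with $\delta$ fixed, I would prove $\E|J^{n,\delta}|^2\to0$ by a Riemann-sum argument. On a partition $0=s_0<\dots<s_N=t$ of mesh $\rho$, the isometry together with the $1/\delta$-Lipschitz property of $\psi_\delta$ and the asymptotic equicontinuity of $R^n_j(n\cdot)/(v_j\sqrt n)$ (its uniform modulus of continuity on $[0,t]$ tends to $0$ with $\rho$ uniformly in $n$ since the limit is continuous) bounds the $L^2$-distance between $\int_0^t\psi_\delta(R^n_j(ns-)/(v_j\sqrt n))\,\di\bar M^n(s)$ and the It\^o sum $\Sigma^n_\rho:=\sum_i\psi_\delta(R^n_j(ns_i-)/(v_j\sqrt n))\bigl(\bar M^n(s_{i+1})-\bar M^n(s_i)\bigr)$ by a quantity vanishing as $\rho\to0$ uniformly in $n$; likewise on the limit side. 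For fixed $\rho$ one has $\Sigma^n_\rho\to\Sigma^\infty_\rho:=\sum_i\psi_\delta(X^\infty_j(s_i))\bigl(W(s_{i+1})-W(s_i)\bigr)$ a.s., and moreover, because $\langle\bar M^n\rangle$ has the \emph{deterministic} limit $t$, bounded convergence gives $\E|\Sigma^n_\rho|^2=\E\sum_i\psi_\delta(R^n_j(ns_i-)/(v_j\sqrt n))^2\bigl(\langle\bar M^n\rangle_{s_{i+1}}-\langle\bar M^n\rangle_{s_i}\bigr)\to\E\sum_i\psi_\delta(X^\infty_j(s_i))^2(s_{i+1}-s_i)=\E|\Sigma^\infty_\rho|^2$. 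Since $\Sigma^n_\rho\to\Sigma^\infty_\rho$ a.s.\ with convergence of the $L^2$-norms, Pratt's lemma gives $\Sigma^n_\rho\to\Sigma^\infty_\rho$ in $L^2$, and a $3\ve$ argument finishes. (One may instead invoke a general stability theorem for stochastic integrals of Kurtz--Protter type, using $\bar M^n\Rightarrow W$ with $\sup_n\E\langle\bar M^n\rangle_t<\infty$.)

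Putting the three pieces together, $\limsup_n\E|\,\cdot\,|^2\le 3\limsup_n\bigl(\E|I^{n,\delta}|^2+\E|J^{n,\delta}|^2+\E|K^\delta|^2\bigr)\le 6C_t\delta$, and letting $\delta\to0$ yields \eqref{e:sko}. \textbf{The main obstacle} is the uniform-in-$n$ bound $\limsup_n\E\int_0^t\1_{0<R^n(ns-)/\sqrt n<\delta}\,\di s\le C_t\delta$: one must transport the local-time (occupation-time) estimate from the limiting reflected Brownian motion back to the prelimit walks, which is exactly what the continuous majorant $\chi_\delta$ and the a.s.\ locally uniform convergence of $R^n(n\cdot)/\sqrt n$ from Step~3 deliver; everything else is routine stochastic-calculus bookkeeping.
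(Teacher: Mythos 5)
Your proposal is correct, but it takes a genuinely different route from the paper's. The paper proves \eqref{e:sko} by first discarding an initial interval $[0,t_0]$ (its contribution is at most of order $t_0$ by the isometry) and then invoking Skorokhod's theorem on passage to the limit in stochastic integrals, verifying: a.s.\ convergence of the integrator martingales at fixed times, the second-moment bound, a.s.\ convergence of the indicators $\1_{X^n_j(s-)>0}\to\1_{X^\infty_j(s)>0}$ at each fixed $s\in[t_0,t]$, and their asymptotic uniform continuity in probability; the key probabilistic input there is that for each fixed $s>0$ the limiting radial process is strictly positive a.s., so exactly one ray is active on a neighbourhood of $s$ and the indicators stabilize. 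You never need that pointwise statement: you smooth the indicator by a Lipschitz $\psi_\delta$, control the two boundary-layer terms $I^{n,\delta}$, $K^\delta$ by the isometry plus an occupation-time (local-time) estimate for the limiting reflected Brownian motion near $0$, transported to the prelimit via a continuous majorant and bounded convergence, and treat the smooth term $J^{n,\delta}$ by Riemann--It\^o sums, using that $\langle \bar M^n\rangle_t=T^n_{\text{normal}}(nt)/n\to t$ is deterministic to upgrade a.s.\ convergence of the sums to $L^2$ convergence (equivalently, a Kurtz--Protter stability theorem would do). Your route avoids the $t_0$-truncation and the fixed-time positivity argument and gives a quantitative $O(\delta)$ error, at the price of the occupation-time estimate and some extra bookkeeping; the paper's route avoids any local-time estimates but leans on the cited Skorokhod theorem and on the special structure of the limit. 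Two cosmetic slips, neither a gap: the process converging a.s.\ locally uniformly is the label-normalized radius $R^n(n\cdot)/(v_{l^n(n\cdot)}\sqrt n)$, not $R^n(n\cdot)/\sqrt n$ (harmless in your bound for $I^{n,\delta}$, since on the support of the integrand $R^n_j>0$ forces $l^n=j$, and the resulting weighted radial limit is comparable to the reflected Brownian motion up to constants); and the modulus-of-continuity control for $R^n_j(n\cdot)/(v_j\sqrt n)$ holds in the form $\limsup_n$ of the modulus being dominated a.s.\ by that of the continuous limit, rather than uniformly in $n$, which is all your three-term limiting argument actually needs.
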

\begin{proof} 
We note that for any $t_0>0$
\ba
&\E \Big|\int_0^{t_0}\1_{\frac{R^n_j(ns-)}{v_j\sqrt{n}}>0 }\, \di \frac{V^n(T^n_\text{\rm normal}(ns))}{\sqrt{n}}\Big|^2 \leq t_0,\\
&\E \Big|\int_0^{t_0}\1_{X^\infty_j(s)>0}\, \di W(s)\Big|^2  \leq t_0.
\ea
Therefore to prove \eqref{e:sko}
it is sufficient to check that for any $t_0\in(0,t]$
\ba
\label{e:sko-t0}
\lim_{n\to\infty}\E \Big|\int_{t_0}^t\1_{\frac{R^n_j(ns-)}{v_j\sqrt{n}}>0 }\, \di \frac{V^n(T^n_\text{\rm normal}(ns))}{\sqrt{n}} 
- \int_{t_0}^t\1_{X^\infty_j(s)>0}\, \di W(s)\Big|^2=0.
\ea
This will follow from Skorokhod's Theorem from \S 3 of Chapter II in \cite{Skorokhod_Issl}.
Recall the notation
\ba
X^n_j(t)=\frac{R^n_j(nt)}{v_j\sqrt{n}},\quad j=1,\dots,m,
\ea
and let 
\ba
M^n(t)= \frac{V^n(T^n_\text{\rm normal}(nt))}{\sqrt{n}}    ,\quad j=1,\dots,m,
\ea
To apply Skorokhod's Theorem we have to verify the following conditions:

\noindent
i) for any fixed $s\in[t_0,t]$
\ba
M_n(s)\to W(s)\quad \text{a.s.},\quad n\to\infty;
\ea

\noindent
ii) $\E |M_n(t)|^2\leq t$, $n\geq 1$;

\noindent
iii) for any fixed $s\in[t_0,t]$
\ba
\1_{X^n_j(s-)>0}\to \1_{X^\infty_j(s)>0}\quad \text{a.s.},\quad n\to\infty,
\ea

\noindent
iv) 
for any $\ve>0$
\ba
\label{e:584}
\lim_{h\to 0}\lim_{n\to\infty}\sup_{|t_2-t_1|< h}\P\Big(|\1_{X^n_j(t_2-)>0}-\1_{X^n_j(t_1-)>0}|>\ve\Big)=0.
\ea

Indeed, convergence i) was established in \eqref{e:VW}. Condition ii) holds true by the construction of the sequence $V^n$.  
We emphasize that in \cite{Skorokhod_Issl}, 
it was demanded that $\E |M_n(s)|^2\to s$, $n\to\infty$, for all $s\in[t_0,t]$. However the inspection of Skorokhod's argument shows
that only boundedness was used in the proof.

To demonstrate iii), we recall that $X^\infty=\sum_{l=1}^m X_l^\infty$ is a reflected Brownian motion. Hence with probability 1, $X^\infty(s)>0$
for any fixed $s>0$.
Therefore, for a.a.\ $\omega$ there exists $k=k(s,\omega)$ such that $X^\infty_k(s,\omega)>0$. 
Let $\kappa=\kappa(s,\omega)>0$ be such that 
\ba
\label{e:Xk}
\inf_{r\in[s-\kappa,s+\kappa]} X^\infty_k(r)>0.
\ea
Correspondingly, for $i\neq k$, we have 
\ba
\label{e:Xi}
X^\infty_i(r)=0\quad \text{on}\quad r\in[s-\kappa,s+\kappa].
\ea
Recall that $X^n_j(\cdot )\to X^\infty_j(\cdot)$ locally uniformly for all $j$ a.s.\ and in particular on the interval $[s-\kappa,s+\kappa]$.
Therefore, for $n$ large enough
\ba
\label{e:Xk1}
X^n_k(r)>0,\quad r\in[s-\kappa,s+\kappa]
\ea
and for all $i\neq k$
\ba
\label{e:Xi1}
X^n_i(r)=0,\quad r\in[s-\kappa,s+\kappa].
\ea
This yields iii).

Eventually we establish iv). First we note that by \eqref{e:Xk} and \eqref{e:Xi} the mapping $s\mapsto \1_{X^\infty(s)>0}$ is continuous in
probability on $[t_0,t]$, and thus is uniformly continuous in
probability on $[t_0,t]$, i.e., for each $\ve>0$
\ba
\lim_{h\to 0}\sup_{|t_2-t_1|< h}\P\Big(|\1_{X^\infty(t_2)>0}-\1_{X^\infty(t_1)>0}|>\ve\Big)=0
\ea
that is equivalent to 
\ba
\lim_{h\to 0}\sup_{|t_2-t_1|< h}\P\Big(\1_{X^\infty(t_2)>0}\neq \1_{X^\infty(t_1)>0}\Big)=0.
\ea
From \eqref{e:Xk}, \eqref{e:Xi}, \eqref{e:Xk1} and \eqref{e:Xi1} we get that for any $\delta>0$ and $s\in[t_0,t]$ there is 
$\kappa=\kappa(s)$ and $n_0=n_0(s)$ such that for all $j=1,\dots,m$
\ba
\sup_{|z-s|<\kappa(s)}\P\Big(\1_{X^n_j(z-)>0}\neq \1_{X^\infty_j(z)>0}\Big)<\delta, \quad n\geq n_0(s).
\ea
Since $[t_0,t]$ is a compact, from its infinite covering by the sets $(s-\frac12\kappa(s),s+\frac12\kappa(s))$, $s\in[t_0,t]$, we may choose a finite 
subcovering  $(s_p-\frac12\kappa(s_p),s_p+\frac12\kappa(s_p))$, $p=1,\dots,N$. Hence, for $|t_2-t_1|<h$ with
\ba
h:=\frac12\min_{p=1,\dots, N}  \kappa(s_p),
\ea
there is $p=p(t_1,t_2)$ such that
\ba
t_1,t_2\in (s_p- \kappa(s_p),s_p+ \kappa(s_p)).
\ea
Hence for all $n\geq \max\{n_0(s_1),\dots,n_0(s_N)\}$ we get
\ba
\P\Big(\1_{X^n_j(t_1-)>0}\neq \1_{X^n_j(t_2-)>0}\Big)
&\leq 
\P\Big(\1_{X^n_j(t_1-)>0}\neq \1_{X^\infty_j(t_1)>0}\Big)\\
&+\P\Big(\1_{X^\infty_j(t_1)>0}\neq \1_{X^\infty_j(t_2)>0}\Big)\\
&+\P\Big(\1_{X^\infty_j(t_2)>0}\neq \1_{X^n_j(t_2-)>0}\Big)\\
&\leq 2\delta
+\sup_{|s_2-s_1|< h}\P\Big(\1_{X^\infty(s_2)>0}\neq \1_{X^\infty(s_1)>0}\Big).
\ea
Therefore,
\ba
\limsup_{h\to 0} \lim_{n\to\infty}\sup_{|t_2-t_1|< h}\P\Big(|\1_{X^n_j(t_2-)>0}-\1_{X^n_j(t_1-)>0}|>\ve\Big)\leq 2\delta, 
\ea
what finishes the proof of \eqref{e:584} and \eqref{e:sko-t0}. The Lemma is proven.
\end{proof}

\section{Proof of Theorem \ref{thm:Skew_lim_pertRW}\label{section:proof_skewBM}}

Before we start proofs let us formulate two results on convergence of compositions of functions and  processes.
\begin{lem}\label{lem:composition ordinary}
Let $(f_n,\lambda_n)_{n\geq 0}$ be a sequence of c\`adl\`ag functions such that
\begin{enumerate}
      \item $f_n \to f_0, n\to\infty$ in $D([0,+\infty),\mbR^m)$;
      \item $(\lambda_n)_{n\geq 1}$ are non-negative and non-decreasing functions;
      \item$\lambda_n\to \lambda_0, n\to\infty,$ in $D([0,+\infty),\mbR)$;
      \item $\lambda_0$ is strictly increasing and continuous function.
\end{enumerate}
Then 
\be 
f_n\circ \lambda_n\to f_0\circ\lambda_0,\ n\to\infty,
\ee
in $D([0,+\infty),\mbR^m)$.
\end{lem}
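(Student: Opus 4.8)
The plan is to reduce the statement, in two moves, to the classical fact that right-composition with a \emph{fixed} c\`adl\`ag function is continuous (in the $J_1$ topology) at arguments that are continuous and strictly increasing. Fix $T>0$ at which $f_0\circ\lambda_0$ is continuous; since a c\`adl\`ag function has at most countably many jumps, it suffices to prove that the restrictions of $f_n\circ\lambda_n$ converge to $f_0\circ\lambda_0$ in $D([0,T],\mbR^m)$ for all such $T$, which is equivalent to convergence in $D([0,\infty),\mbR^m)$. As $\lambda_0$ is continuous, $\lambda_n\to\lambda_0$ holds locally uniformly, so $\alpha_n:=\sup_{t\le T}|\lambda_n(t)-\lambda_0(t)|\to0$; combined with $\lambda_n\ge 0$ this gives $\lambda_0(0)\ge 0$, and, putting $T':=\lambda_0(T)+2$, we get $\lambda_n([0,T])\cup\lambda_0([0,T])\subseteq[0,T']$ for all large $n$.

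\emph{Move 1: absorbing the convergence $f_n\to f_0$.} By the time-change characterisation of $J_1$-convergence there are increasing homeomorphisms $\gamma_n$ of $[0,\infty)$ with $\gamma_n\to\mathrm{id}$ and $f_n\circ\gamma_n\to f_0$, both uniformly on compact intervals. Set $g_n:=f_n\circ\gamma_n$ and $\mu_n:=\gamma_n^{-1}\circ\lambda_n$. Then $g_n\to f_0$ uniformly on $[0,T']$; $\mu_n$ is non-decreasing with $\mu_n([0,T])\subseteq[0,T']$ eventually; and $\sup_{t\le T}|\mu_n(t)-\lambda_0(t)|\le\|\gamma_n^{-1}-\mathrm{id}\|_{\infty,[0,T']}+\alpha_n\to0$. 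Since $f_n\circ\lambda_n=g_n\circ\mu_n$ and $\sup_{t\le T}|g_n(\mu_n(t))-f_0(\mu_n(t))|\le\|g_n-f_0\|_{\infty,[0,T']}\to0$, it is enough to prove that $f_0\circ\mu_n\to f_0\circ\lambda_0$ in $D([0,T],\mbR^m)$. This is the announced reduction to a single c\`adl\`ag function $y:=f_0$: given non-decreasing c\`adl\`ag $\mu_n\to\lambda_0$ uniformly on $[0,T]$ with $\lambda_0$ continuous and strictly increasing, show $y\circ\mu_n\to y\circ\lambda_0$ in $D([0,T],\mbR^m)$.

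\emph{Move 2: the single-function composition lemma.} I would prove this by relative compactness together with identification of the limit. For compactness, fix $\varepsilon>0$ and, using right-continuity of $y$ together with the continuity of $y$ at $\lambda_0(T)$, pick a partition $\lambda_0(0)=u_0<u_1<\dots<u_N=\lambda_0(T)$ and $\eta>0$ such that $y$ oscillates by less than $\varepsilon$ on each $[u_{j-1},u_j)$ and on $[u_{N-1},\lambda_0(T)+\eta)$. For $1\le j\le N-1$ let $t^n_j:=\inf\{t\in[0,T]\colon \mu_n(t)\ge u_j\}$; the uniform convergence $\mu_n\to\lambda_0$ and the \emph{strict} monotonicity of $\lambda_0$ force $t^n_j\to\lambda_0^{-1}(u_j)$, and these $N-1$ limits are distinct, so for large $n$ the points $0=:t^n_0<t^n_1<\dots<t^n_{N-1}<t^n_N:=T$ partition $[0,T]$ with mesh bounded away from $0$. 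On $[t^n_{j-1},t^n_j)$ the non-decreasing function $\mu_n$ takes values in $[u_{j-1},u_j)$ (on the top interval, in $[u_{N-1},\lambda_0(T)+\eta)$ for large $n$), hence $y\circ\mu_n$ oscillates there by less than $\varepsilon$; this bounds the c\`adl\`ag modulus of $y\circ\mu_n$ uniformly in large $n$ and gives relative compactness in $D([0,T],\mbR^m)$. For the identification: whenever $\lambda_0(t)$ is a continuity point of $y$ — i.e. for all but countably many $t$, and also for $t=T$ — one has $y(\mu_n(t))\to y(\lambda_0(t))$ because $\mu_n(t)\to\lambda_0(t)$; since two c\`adl\`ag functions agreeing on a dense set (and at the right endpoint) coincide, every subsequential $D$-limit of $y\circ\mu_n$ equals $y\circ\lambda_0$, which completes the proof.

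\emph{Main obstacle.} The substance lies entirely in Move 2 — ruling out spurious jumps of $y\circ\mu_n$ — and the single fact that makes it work is $t^n_j=\inf\{t\colon \mu_n(t)\ge u_j\}\to\lambda_0^{-1}(u_j)$, which genuinely uses both the \emph{uniform} convergence $\mu_n\to\lambda_0$ and the \emph{strict} monotonicity of $\lambda_0$: were $\lambda_0$ merely non-decreasing, a flat stretch of $\lambda_0$ could be crossed by the $\mu_n$ on either side and the jump location of $y\circ\mu_n$ would not stabilise. The remaining work is the routine but fussy bookkeeping at the right endpoint, which is exactly why one restricts throughout to values of $T$ at which $f_0\circ\lambda_0$ is continuous.
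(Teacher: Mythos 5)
The paper itself disposes of this lemma by citation (Whitt, Theorem 13.2.2), so your self-contained two-move argument is in any case a different route, and most of it is sound: the reduction via the time-change characterisation of $J_1$-convergence, the convergence of the crossing times $t^n_j\to\lambda_0^{-1}(u_j)$ (which indeed uses uniform convergence plus strict monotonicity), and the identification of subsequential limits are all fine. The genuine gap is at the left endpoint, inside the compactness step: for $j=1$ you assert that $\mu_n$ maps $[t^n_0,t^n_1)=[0,t^n_1)$ into $[u_0,u_1)$, which requires $\mu_n(0)\geq u_0=\lambda_0(0)$. Nothing in the hypotheses gives this: $\mu_n(0)=\gamma_n^{-1}(\lambda_n(0))$ only converges to $\lambda_0(0)$, possibly from below. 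If $f_0$ has a jump exactly at $\lambda_0(0)$, then $f_0\circ\mu_n$ oscillates by the size of that jump on the first subinterval (the crossing of level $\lambda_0(0)$ happens at times tending to $0+$), so the modulus bound fails, and the values at $t=0$ need not converge either.

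Moreover, this hole cannot be patched, because the lemma as literally stated is false: take $m=1$, $f_n=f_0=\1_{[1,\infty)}$, $\lambda_n(t)=1-\tfrac1n+t$, $\lambda_0(t)=1+t$. All four hypotheses hold, yet $f_n(\lambda_n(t))=\1_{\{t\geq 1/n\}}$ does not converge to $f_0(\lambda_0(t))\equiv 1$ in $D([0,\infty),\mbR)$: $J_1$-convergence forces convergence of the values at $t=0$, and the relative compactness you are trying to establish fails for exactly the reason just described. So an extra boundary hypothesis is needed, e.g.\ that $f_0$ is continuous at $\lambda_0(0)$, or that $\lambda_0(0)=0$; with the latter, $\lambda_n\geq 0$ gives $\mu_n(0)\geq 0=\lambda_0(0)$, your first-interval claim holds, and the rest of your proof goes through verbatim (with the continuity assumption one instead extends the partition slightly to the left of $\lambda_0(0)$). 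Since the paper only invokes the lemma through Corollary 6.2, where $\lambda_0(t)=ct$ and the $\eta_n$ are non-negative, the missing condition is satisfied in every application, so your argument is adequate for the paper's purposes; but as a proof of the statement as printed it necessarily has this gap, which is really an imprecision in the statement itself (and in how the cited Theorem 13.2.2 is transcribed) and would be worth flagging.
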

\begin{proof}  
See Theorem 13.2.2, p.\ 430, in Whitt \cite{Whitt02}.
\end{proof}

\begin{corl}\label{corl:composition ordinary}
  Let $(\xi_n)_{n\geq 0}$ and $(\eta_n)_{n\geq 1}$ be sequences of stochastic c\`adl\`ag processes with values in $\mbR^m$ and $\mbR$ respectively.
Assume that 
  \begin{enumerate}
      \item $(\xi_n(t))_{t\geq 0}\Rightarrow (\xi_0(t))_{t\geq 0}$, $n\to\infty$ in $D([0,+\infty),\mbR^m)$;
      \item $(\eta_n)_{n\geq 1}$ are non-negative and non-decreasing processes;
      \item\label{cond3_lem_comp} $(\eta_n(t))_{t\geq 0}\Rightarrow (ct)_{t\geq 0}, n\to\infty$ in $D([0,+\infty),\mbR)$ where $c>0$ is a constant.
  \end{enumerate}
Then 
\be
\label{eq:conv_comp_ct}
(\xi_n(\eta_n(t))_{t\geq 0}\Rightarrow (\xi_0(ct))_{t\geq 0}, n\to\infty \text{ in } D([0,+\infty),\mbR^m).
\ee
\end{corl}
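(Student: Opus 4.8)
The plan is to deduce the statement from the deterministic composition result of Lemma~\ref{lem:composition ordinary} by means of the Skorokhod representation theorem. First I would strengthen the two marginal weak convergences in the hypotheses to the joint convergence
\[
(\xi_n,\eta_n)\Rightarrow(\xi_0,\lambda_0),\quad n\to\infty,
\]
in the product space $D([0,\infty),\mbR^m)\times D([0,\infty),\mbR)$, where $\lambda_0(t):=ct$. This is possible precisely because the limit of $(\eta_n)$ is \emph{non-random}: marginal convergence of $\xi_n$ together with convergence of $\eta_n$ to a fixed deterministic element forces joint convergence. This is the version of Slutsky's theorem for random elements with values in a separable metric space already invoked above, see Ressel~\cite{ressel1982topological}; one only has to note that $D([0,\infty),\mbR^m)\times D([0,\infty),\mbR)$, equipped with the product of the Skorokhod $J_1$ topologies, is again a separable metric space, so that the argument applies verbatim.

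Next I would invoke the Skorokhod representation theorem to obtain, on a common probability space, copies $\big(\wt\xi_n,\wt\eta_n\big)\overset{\di}{=}\big(\xi_n,\eta_n\big)$, $n\geq 1$, and a copy $\wt\xi_0\overset{\di}{=}\xi_0$, such that $\wt\xi_n\to\wt\xi_0$ in $D([0,\infty),\mbR^m)$ and $\wt\eta_n\to\lambda_0$ in $D([0,\infty),\mbR)$ almost surely. By assumption each $\wt\eta_n$ is non-negative and non-decreasing, and the limit $\lambda_0(t)=ct$ is strictly increasing and continuous since $c>0$. Hence for almost every $\omega$ the pairs $\big(\wt\xi_n(\cdot,\omega),\wt\eta_n(\cdot,\omega)\big)_{n\geq 0}$ (with $\wt\eta_0:=\lambda_0$) satisfy all the hypotheses of Lemma~\ref{lem:composition ordinary}, which gives
\[
\wt\xi_n\circ\wt\eta_n\to\wt\xi_0\circ\lambda_0=\wt\xi_0(c\,\cdot)\quad\text{a.s. in }D([0,\infty),\mbR^m),\ n\to\infty.
\]

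Finally, almost sure convergence implies convergence in distribution, and since $\wt\xi_n\circ\wt\eta_n\overset{\di}{=}\xi_n\circ\eta_n$ for every $n\geq 1$ while $\wt\xi_0\circ\lambda_0\overset{\di}{=}\xi_0(c\,\cdot)$, this yields \eqref{eq:conv_comp_ct}. The only genuinely non-routine step is the first one, the passage from marginal to joint convergence; once it is in place, the rest is a mechanical combination of the Skorokhod coupling with the deterministic continuity of the composition map already established in Lemma~\ref{lem:composition ordinary}. As an alternative to the coupling, one could apply the continuous mapping theorem directly, observing that Lemma~\ref{lem:composition ordinary} identifies $(f,\lambda)\mapsto f\circ\lambda$ as continuous at every pair whose second coordinate is strictly increasing and continuous, and that the joint limit law is concentrated on such pairs; but the representation-theorem argument is cleaner to write down.
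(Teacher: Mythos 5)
Your proposal is correct and follows essentially the same route as the paper's own proof: upgrade the marginal convergences to joint convergence via the Slutsky-type result of Ressel (possible because the limit $(ct)_{t\geq 0}$ is deterministic), then apply the Skorokhod representation theorem and invoke Lemma~\ref{lem:composition ordinary} pathwise, noting that $\lambda_0(t)=ct$ is continuous and strictly increasing since $c>0$. The paper states this in two sentences; your write-up merely spells out the same steps in more detail.
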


\begin{proof}
Since the process $ (ct)_{t\geq 0}$ is non-random, assumptions of the corollary are equivalent to the convergence
of pairs
\be
\label{conv}
(\xi_n(t),\eta_n(t))_{t\geq 0}\Rightarrow (\xi(t), ct)_{t\geq 0},
\ee
see Ressel \cite{ressel1982topological}.

The statement follows
from Lemma \ref{lem:composition ordinary} and the Skorokhod representation theorem, see for example Kallenberg \cite[Theorem 3.30, p.~56]{Kallenberg-02}.
\end{proof}
  
\begin{remk}
Since all $\eta_n$ are non-decreasing and the limit is non-random,  condition \ref{cond3_lem_comp} of Corollary \ref{corl:composition ordinary}
is equivalent to weak convergence (or convergence in probability) $\eta_n(t)\to ct$, $n\to\infty$ for any fixed $t\geq 0$.
\end{remk}  
  
The next lemma gives a 
reverse result: convergence of compositions yields convergence of functions. We formulate the result for functions and then a corollary for processes.
\begin{lem}  \cite[Lemma 3.5]{iksanov2023functional}
\label{lemma:randomWalkTimeChangeConvergence}
Let $(f_n)_{n\geq 0}\subset D([0,\infty),\mbR^m)$ and  $(\mu_n)_{n\geq 0} \subset D([0,\infty),\mbR)$, $\mu_n$ be non negative and non decreasing. Assume that, for all $T>0$,
\be
\label{lemma:randomWalkTimeChangeConvergence:lambda}
\lim_{n\to\infty}\sup_{t \in [0,\, T]} |\mu_n(t) - t| = 0
\ee
and
\be
\label{lemma:randomWalkTimeChangeConvergence:fConv}
\lim_{n\to\infty} f_n\circ \mu_n=f_0 
\ee
in the $J_1$-topology in $D([0,\infty),\mbR^m)$. For $n\in\mbN$, denote by $(t_k^{(n)})_{k}$ the points of discontinuity of $\mu_n$
and put $u_k^{(n)}:= \mu_n(t_k^{(n)}-)$ and $v_k^{(n)}:= \mu_n(t_k^{(n)})$. If, in addition to \eqref{lemma:randomWalkTimeChangeConvergence:lambda} and \eqref{lemma:randomWalkTimeChangeConvergence:fConv}, for all $T>0$,
\be \label{lemma:randomWalkTimeChangeConvergence:condition}
\lim_{n\to\infty} \sup_{k}\sup_{s \in [u_k^{(n)}, v_k^{(n)})\cap [0,\, T]} |f_n(s) - f_n(u_k^{(n)}-)|=0,
\ee
then $$\lim_{n\to\infty}f_n=f_0$$ in the $J_1$-topology in $D([0,\infty),\mbR^m)$.
\end{lem}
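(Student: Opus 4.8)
The plan is to derive $f_n\to f_0$ from two ingredients: relative compactness of $\{f_n\}$ in $D([0,\infty),\mbR^m)$, for which condition \eqref{lemma:randomWalkTimeChangeConvergence:condition} is the crucial input, and identification of every subsequential limit with $f_0$ via the forward composition result Lemma \ref{lem:composition ordinary} recorded above. It suffices to argue on each interval $[0,T]$. Fix $T^\sharp>T$ and write $\omega_n:=\sup_{t\in[0,T^\sharp]}|\mu_n(t)-t|\to0$ and $\varrho_n:=\sup_k\sup_{s\in[u_k^{(n)},v_k^{(n)})\cap[0,T^\sharp]}|f_n(s)-f_n(u_k^{(n)}-)|$, so that $\varrho_n\to0$ by \eqref{lemma:randomWalkTimeChangeConvergence:condition}. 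For $n$ large the set $\mu_n([0,T^\sharp])$ covers $[0,T]$ except for the open ``skipped'' intervals $(u_k^{(n)},v_k^{(n)})$, each of length at most $2\omega_n$.

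For the relative compactness I would start from the fact that $f_n\circ\mu_n\to f_0$ in $J_1$ forces $\{f_n\circ\mu_n\}$ to be relatively compact in $D([0,T^\sharp],\mbR^m)$, hence uniformly bounded with $\lim_{\delta\to0}\limsup_n w'_{[0,T^\sharp]}(f_n\circ\mu_n,\delta)=0$, $w'$ being the usual Skorokhod oscillation modulus. Given $\delta>0$, pick for each $n$ a near-optimal $w'$-partition $0=r_0^{(n)}<\dots<r_{l_n}^{(n)}=T^\sharp$ with mesh exceeding $\delta$ and transport it forward by $s_j^{(n)}:=\mu_n(r_j^{(n)})$; then $\{s_j^{(n)}\}$ has mesh exceeding $\delta-2\omega_n$ and reaches past $T$. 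The key estimate is that on each pushed-forward interval
\[
\operatorname{osc}\big(f_n;[s_{j-1}^{(n)},s_j^{(n)})\big)\ \le\ 2\,\operatorname{osc}\big(f_n\circ\mu_n;[r_{j-1}^{(n)},r_j^{(n)})\big)+2\varrho_n,
\]
because a point of $[s_{j-1}^{(n)},s_j^{(n)})$ is either a value $\mu_n(r)$ with $r\in[r_{j-1}^{(n)},r_j^{(n)})$, at which $f_n$ coincides with $(f_n\circ\mu_n)(r)$, or it belongs to a skipped interval $(u_k^{(n)},v_k^{(n)})\subseteq[s_{j-1}^{(n)},s_j^{(n)})$ whose jump time $t_k^{(n)}$ lies in $[r_{j-1}^{(n)},r_j^{(n)}]$, at which $f_n$ is within $\varrho_n$ of $f_n(u_k^{(n)}-)=(f_n\circ\mu_n)(t_k^{(n)}-)$, again a value approached from inside $[r_{j-1}^{(n)},r_j^{(n)})$. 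This dichotomy, together with uniform boundedness of $f_n\circ\mu_n$, also yields $\sup_n\sup_{[0,T]}|f_n|<\infty$. After a routine merging of the sub-interval adjacent to $T$ — arranged, e.g., so that $\mu_n^{-1}([T-\delta,T])$ sits strictly inside a single partition interval, which keeps the final mesh above $\delta$ without introducing an uncontrolled oscillation — one gets $\lim_{\delta\to0}\limsup_n w'_{[0,T]}(f_n,\delta)=0$. Hence $\{f_n\}$ is relatively compact in $D([0,T],\mbR^m)$ for every $T$, and by a diagonal extraction in $D([0,\infty),\mbR^m)$.

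To conclude, let $g$ be a subsequential limit of $\{f_n\}$ along some $N'\subseteq\mbN$. Since each $\mu_n$ is non-decreasing and, by \eqref{lemma:randomWalkTimeChangeConvergence:lambda}, $\mu_n\to\mathrm{id}$ locally uniformly and hence in $D([0,\infty),\mbR)$, Lemma \ref{lem:composition ordinary} applied to $f_n$ and $\mu_n$ yields $f_n\circ\mu_n\to g\circ\mathrm{id}=g$ in $D([0,\infty),\mbR^m)$ along $N'$; comparing with the hypothesis $f_n\circ\mu_n\to f_0$ forces $g=f_0$. As all subsequential limits of the relatively compact family $\{f_n\}$ equal $f_0$, we get $f_n\to f_0$ in the $J_1$-topology.

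I expect the main obstacle to be the oscillation estimate in the relative-compactness step and the surrounding bookkeeping in Skorokhod space: one has to verify precisely that each skipped interval lies inside one pushed-forward partition interval and that the matching $f_n\circ\mu_n$-interval still ``sees'' the corresponding jump time of $\mu_n$, so that \eqref{lemma:randomWalkTimeChangeConvergence:condition} can be invoked; handling the half-open interval adjacent to $T$ without losing mesh or acquiring an uncontrolled oscillation is the most delicate point.
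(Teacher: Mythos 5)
Your argument is essentially correct, but note that the paper itself does not prove this lemma: it is quoted from \cite{iksanov2023functional}, and the only in-paper commentary is the remark following it, which observes that for \emph{continuous} $\mu_n$ the statement reduces to Lemma~\ref{lem:composition ordinary} via $f_n=(f_n\circ\mu_n)\circ\mu_n^{-1}$. Your route --- (i) transfer relative compactness from the convergent sequence $f_n\circ\mu_n$ to $\{f_n\}$ by pushing near-optimal $w'$-partitions forward through $\mu_n$, controlling the parts of $f_n$ invisible to the composition, namely the gaps $[u_k^{(n)},v_k^{(n)})$, by \eqref{lemma:randomWalkTimeChangeConvergence:condition}; (ii) identify every subsequential limit as $f_0$ by Lemma~\ref{lem:composition ordinary} and uniqueness of $J_1$-limits --- is a correct and natural extension of that remark, independent of the direct construction of time changes one would expect in the cited source. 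Your dichotomy for points of $[s_{j-1}^{(n)},s_j^{(n)})$ does check out, including the facts that a gap meeting this interval is contained in it and that its jump time lies in $(r_{j-1}^{(n)},r_j^{(n)}]$; the only slip is the identity $f_n(u_k^{(n)}-)=(f_n\circ\mu_n)(t_k^{(n)}-)$, which fails when $\mu_n$ is constant equal to $u_k^{(n)}$ immediately to the left of $t_k^{(n)}$ (then the right-hand side is $f_n(u_k^{(n)})$), but \eqref{lemma:randomWalkTimeChangeConvergence:condition} applied at $s=u_k^{(n)}$ repairs this at the cost of one extra $\varrho_n$, so the oscillation bound survives with a worse constant. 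Two points need more care than you give them. First, your device at the right endpoint (forcing $\mu_n^{-1}([T-\delta,T])$ into a single partition cell) may be incompatible with near-optimality of the $w'$-partition when $f_0$ has a jump there; the standard fix is to use the Ethier--Kurtz form of the modulus, in which the last subinterval of $[0,T]$ is exempt from the mesh requirement, or to prove convergence in $D([0,T])$ only for $T$ in a dense set of continuity points of $f_0$, which suffices in $D([0,\infty))$. Second, your pushed partition starts at $s_0^{(n)}=\mu_n(0)$, so the initial segment $[0,\mu_n(0))$ is covered neither by the range of $\mu_n$ nor by \eqref{lemma:randomWalkTimeChangeConvergence:condition}; one needs $\mu_n(0)=0$ (true in the paper's application, where $\eta_n(0)=0$) or the convention $\mu_n(0-):=0$ --- a caveat that concerns the statement as transcribed rather than your proof.
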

\begin{remk}
    If $\mu_n$ are continuous, $\mu_n(0)=0$,  and $\mu_n(\infty)=\infty,$ then the result of the lemma follows from Lemma \ref{lem:composition ordinary}. Indeed, let  $\lambda_n(t)=\mu_n^{-1}(t):=\inf\{s\geq 0\colon \mu_n(s)\geq t\}.$ Then $\mu_n\circ\lambda_n=\text{id}$. So $f_n=(f_n\circ\mu_n)\circ \lambda_n$. It can be easily seen that $\lambda_n$ locally uniformly converges to $t$, so all conditions of  Lemma \ref{lem:composition ordinary} are satisfied. New features appear if $\mu_n$ may be discontinuous, so the compositions $f_n\circ \mu_n$ do not use parts of $f_n$ paths corresponding to jumps of $\mu_n.$
\end{remk}

The proof of the next corollary is obtained with the help of Skorokhod's representation theorem similarly to the proof of Corollary  \ref{corl:composition ordinary}.

\begin{corl}
\label{corl:randomWalkTimeChangeConvergence}
  Let $(\xi_n)_{n\geq 0}$ and $(\eta_n)_{n\geq 1}$ be sequences of stochastic c\`adl\`ag processes with values in $\mbR^m$ and $\mbR$ respectively
  such that $\eta_n$, $n\geq 1$, are non negative and non decreasing. 
Assume that for all $T>0$, we have locally uniform convergence in probability
\be
\label{corl:randomWalkTimeChangeConvergence:lambda}
\sup_{t \in [0,T]} |\eta_n(t) - t|\toP 0, {n\to\infty},
\ee
and convergence in distribution
\be
\label{corl:randomWalkTimeChangeConvergence:fConv}
 \xi_n\circ \eta_n\Rightarrow\xi_0, \ {n\to\infty},\ \text{ in }D([0,\infty),\mbR^m).
\ee
For $n\in\mbN$, denote by $(t_k^{(n)})_{k}$ the points of discontinuity of $\eta_n$
and 
 put $u_k^{(n)}:= \eta_n(t_k^{(n)}-)$ and $v_k^{(n)}:= \eta_n(t_k^{(n)})$. If, in addition 
 to \eqref{corl:randomWalkTimeChangeConvergence:lambda} and \eqref{corl:randomWalkTimeChangeConvergence:fConv}, 
 for all $T>0$, we have the uniform convergence in probability
\be 
\label{corl:randomWalkTimeChangeConvergence:condition}
 \sup_{k}\sup_{s \in [u_k^{(n)}, v_k^{(n)})\cap [0,\, T]} |\xi_n(s) - \xi_n(u_k^{(n)}-)|\toP0, \ {n\to\infty},
\ee
then 
\ba
\xi_n\Rightarrow\xi_0, \ {n\to\infty},
\ea
in $D([0,\infty),\mbR^m)$.
\end{corl}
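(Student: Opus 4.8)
The plan is to deduce the corollary from the pathwise Lemma~\ref{lemma:randomWalkTimeChangeConvergence} by means of the Skorokhod representation theorem, in the same spirit as Corollary~\ref{corl:composition ordinary} is obtained from Lemma~\ref{lem:composition ordinary}. The one point that needs care is that hypothesis~\eqref{corl:randomWalkTimeChangeConvergence:condition} constrains the process $\xi_n$ itself rather than the composition $\xi_n\circ\eta_n$, so I will carry the relevant auxiliary quantities through the representation and then recover a jointly distributed copy of $\xi_n$ by a transfer argument.

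First I would introduce, for $T>0$, the random quantities
\[
Z_n^{(T)}:=\sup_k\ \sup_{s\in[u_k^{(n)},v_k^{(n)})\cap[0,T]}\bigl|\xi_n(s)-\xi_n(u_k^{(n)}-)\bigr|,
\]
with $(t_k^{(n)})_k$ the discontinuity points of $\eta_n$ and $u_k^{(n)}=\eta_n(t_k^{(n)}-)$, $v_k^{(n)}=\eta_n(t_k^{(n)})$, and observe that the map $\Phi\colon(f,\mu)\mapsto\bigl(f\circ\mu,\ \mu,\ (Z^{(j)})_{j\in\mbN}\bigr)$ — with $Z^{(j)}$ built from $(f,\mu)$ exactly as above — is Borel measurable from $D([0,\infty),\mbR^m)\times D([0,\infty),\mbR)$ into $D([0,\infty),\mbR^m)\times D([0,\infty),\mbR)\times\mbR^{\mbN}$, and $\Phi(\xi_n,\eta_n)=\bigl(\xi_n\circ\eta_n,\eta_n,(Z_n^{(j)})_j\bigr)$. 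By assumption $\xi_n\circ\eta_n\Rightarrow\xi_0$, while $\eta_n\toP\mathrm{id}$ in $D([0,\infty),\mbR)$ by \eqref{corl:randomWalkTimeChangeConvergence:lambda} and $Z_n^{(j)}\toP 0$ for every $j$ by \eqref{corl:randomWalkTimeChangeConvergence:condition}. Since all coordinates but the first converge in probability to deterministic limits, the joint laws converge,
\[
\Phi(\xi_n,\eta_n)\ \Rightarrow\ \bigl(\xi_0,\ \mathrm{id},\ 0,0,\dots\bigr),\qquad n\to\infty,
\]
see Ressel \cite{ressel1982topological}.

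Next I would apply the Skorokhod representation theorem to these $D\times D\times\mbR^{\mbN}$-valued random elements to obtain, on one probability space, copies $\bigl(\wt\psi_n,\wt\eta_n,(\wt Z_n^{(j)})_j\bigr)\overset{\di}{=}\Phi(\xi_n,\eta_n)$ such that, almost surely, $\wt\psi_n\to\wt\xi_0$ in $D([0,\infty),\mbR^m)$, $\wt\eta_n\to\mathrm{id}$ locally uniformly, and $\wt Z_n^{(j)}\to 0$ for each $j$, where $\wt\xi_0\overset{\di}{=}\xi_0$. By a transfer argument applied to the deterministic map $\Phi$ (see, e.g., Kallenberg \cite{Kallenberg-02}), enlarging the space if needed, I can further realize processes $(\wt\xi_n,\wt\eta_n)\overset{\di}{=}(\xi_n,\eta_n)$ with $\Phi(\wt\xi_n,\wt\eta_n)=\bigl(\wt\psi_n,\wt\eta_n,(\wt Z_n^{(j)})_j\bigr)$ a.s. (the second coordinate of $\Phi$ returns $\eta$ itself, which makes the two occurrences of $\wt\eta_n$ consistent); in particular $\wt\xi_n\circ\wt\eta_n=\wt\psi_n$ and the skip quantities formed from $(\wt\xi_n,\wt\eta_n)$ are the $\wt Z_n^{(j)}$. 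On the almost sure event where all these convergences hold, for each fixed $T>0$ the pair $(\wt\xi_n,\wt\eta_n)$ satisfies the hypotheses of Lemma~\ref{lemma:randomWalkTimeChangeConvergence}: $\wt\eta_n$ are non-negative and non-decreasing with $\sup_{t\in[0,T]}|\wt\eta_n(t)-t|\to 0$, $\wt\xi_n\circ\wt\eta_n=\wt\psi_n\to\wt\xi_0$ in $J_1$, and the skip quantity over $[0,T]$ is at most $\wt Z_n^{(\lceil T\rceil)}\to 0$ because $T\mapsto\wt Z_n^{(T)}$ is non-decreasing. Lemma~\ref{lemma:randomWalkTimeChangeConvergence} then yields $\wt\xi_n\to\wt\xi_0$ in $J_1$ almost surely, and since $\wt\xi_n\overset{\di}{=}\xi_n$ and $\wt\xi_0\overset{\di}{=}\xi_0$ this gives $\xi_n\Rightarrow\xi_0$.

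The step I expect to be the main obstacle is precisely the feature separating this corollary from Corollary~\ref{corl:composition ordinary}: because \eqref{corl:randomWalkTimeChangeConvergence:condition} is a condition on $\xi_n$ rather than on $\xi_n\circ\eta_n$, one cannot simply apply Skorokhod's theorem to $(\xi_n\circ\eta_n,\eta_n)$ and read off the conclusion; the auxiliary variables $Z_n^{(j)}$ must be transported through the representation and a jointly distributed copy of $\xi_n$ reconstructed by transfer. Once this is in place, all the genuine pathwise content is supplied by Lemma~\ref{lemma:randomWalkTimeChangeConvergence}.
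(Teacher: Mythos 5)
Your proposal is correct and follows essentially the route the paper intends: joint convergence of $(\xi_n\circ\eta_n,\eta_n,(Z_n^{(j)})_j)$ via Ressel's extension of Slutsky's theorem, the Skorokhod representation theorem, and a pathwise application of Lemma~\ref{lemma:randomWalkTimeChangeConvergence}, which is exactly what the paper's one-sentence proof (``similarly to the proof of Corollary~\ref{corl:composition ordinary}'') points to. The transfer step you make explicit to reconstruct a jointly distributed copy of $\xi_n$ from the represented tuple (including the consistency of the $\eta$-coordinate and the monotonicity in $T$ of the skip functional) is precisely the detail the paper's terse justification glosses over, and you handle it correctly.
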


\begin{proof}[Proof of Theorem \ref{thm:Skew_lim_pertRW}]
 We will prove Theorem \ref{thm:Skew_lim_pertRW} in ``reverse'' order. In Step 1, we construct a Markov
chain $\cX^n$ on the enlarged state space $\mathbb Z \times \{-, +\}$
that mimics the dynamics of $X^n$ and converges to a WBM
due to Theorem \ref{thm:Walsh_lim_pertRW}.
In Step 2, we transform the state space  $\mathbb Z \times \{-, +\}$ into a $\mathbb Z\backslash\{-d,\ldots, d\}$
and construct
an auxiliary Markov chain $\widehat X^n$
whose Donsker scalings converge to a SBM. In Step 3 we define a random
time change that removes times that the original Markov chain $X^n$ spends on the membrane $\{-d,\ldots, d\}$,
thus obtaining the Markov chain $\widehat X^n$.
In the final Step 4 we show that this random time change does not
affect the convergence.

\medskip
\noindent
\textbf{Step 1.}
We will employ Theorem \ref{thm:Walsh_lim_pertRW} for the convergence to the WBM and construct Markov chains $\cX^n=(R^n,l^n)$, $n\geq 1$, on $\mathbb Z\times\{-,+\}$. The original Markov chain
$X^n$ on $\mbZ$ will be obtained from $\cX^n$ by two transformations.

The transition probabilities of $\cX^n$ do not depend on $n$ and are defined as follows. 
If $R^n(0)=x\in\mbN$, i.e., if $R^n(0)$ is on the positive side, then we set
\ba
&\Pb( (R^n(1), l^n(1)) = (y, +)\ |\ (R^n(0), l^n(0)) = (x, +) ):= \Pb(\xi_+=y-x),\\
&\Pb( (R^n(1), l^n(1)) = (y, -)\ |\ (R^n(0), l^n(0)) = (x, -) ):= \Pb(-\xi_-=y-x),
\ea
that is  the distribution of a jump is equal to the distribution of $\xi_+$ if the label is $+$ and $(-\xi_-)$ if the label is $-$.
 
Assume that $R^n(0)=x\in\mbZ\setminus\mathbb N$, i.e., the Markov chain $\cX^n$ starts on the membrane.

If $R^n(0)\leq -2d-1$, then we set 
\ba
\label{e:over}
R^n(1):=-R^n(0)-2d,\quad l^n(1):=-l^n(0),
\ea
In other words, the random walk changes its label and and the radius gets reflected symmetrically with respect to the point $-d$. 
Notice that the new radius lies in $\mbN$. 

Recall $\widetilde \tau_0$ defined in \eqref{eq:defn_of_exits_entra}. If $R^n(0)=x\in\{-2d,\dots,0\}$ so that $x+d\in\{-d,\dots,d\}$, and $y\in\mbN$,
then we set
\ba
&\Pb( (R^n(1), l^n(1)) := (y, +)\ |\ (R^n(0), l^n(0)) = (x, +) ):=
    \Pb(X(\widetilde \tau_0)=y+d\ | X(0)=x+d),\\ 
&\Pb( (R^n(1), l^n(1)) := (y, -)\ |\ (R^n(0), l^n(0)) = (x, +) ):= 
    \Pb(X(\widetilde \tau_0)=-y-d\ | X(0)=x+d), \\ 
&\Pb( (R^n(1), l^n(1)) := (y, -)\ |\ (R^n(0), l^n(0)) = (x, -) ):= 
    \Pb(X(\widetilde \tau_0)=-y-d\ | X(0)=-x-d),\\ 
&\Pb( (R^n(1), l^n(1)) := (y, +)\ |\ (R^n(0), l^n(0)) = (x, -) ):=
    \Pb(X(\widetilde\tau_0)=y+d\ | X(0)=-x-d).
\ea

We define the initial condition $\cX^n(0)=(R^n(0), l^n(0))$ as follows:
\ba
(R^n(0), l^n(0))=\begin{cases}
    (X^n(0)-d, +), & \text{ if } X^n(0)\geq -d,\\
    (-X^n(0)-d, -), & \text{ if } X^n(0)< -d,
\end{cases}
\ea
where $(X^n(k))_{k\geq 0}$ is the original perturbed random walk.

Then
\ba
 \Big(\frac{{R^n(0)}}{v_-\sqrt{n}}\1_{l^n(0)=-}, \frac{{R^n(0)}}{v_+\sqrt{n}}\1_{l^n(0)=+}\Big) \Rightarrow (x_-, x_+),\ n\to\infty,
\ea
where $x_+=x\vee0$, $x_-=(-x)\vee 0$, and $x$ is from \eqref{eq:conv_initial_cond_SBM-846}. % $x_\pm\geq 0$ and $x_-\cdot x_+=0$.

 Theorem   \ref{thm:Walsh_lim_pertRW} and Corollary \ref{thm:Walsh-skew} imply that 
\ba
\vf\Big(\frac{l^n(n\cdot)R^n([n\cdot])}{\sqrt{n}}\Big)\Rightarrow W^\text{skew}_\gamma(\cdot, x), \quad n\to\infty,
\ea
where $\vf$ is defined in \eqref{e:phi}. The permeability parameter $\gamma$ equals
\ba
\label{e:gamma1}
\gamma&=\frac{{\E}_{\pi_\text{entrance}}(R(0)-R(\sigma))(v_+^{-1}\1_{l(0)=+} - v_-^{-1}\1_{l(0)=-} ) }{{\E}_{\pi_\text{entrance}}(R(0)-R(\sigma))v_{l(0)}^{-1}}\\
\ea
see Theorem \ref{thm:Walsh_lim_pertRW} and Corollary \ref{thm:Walsh-skew}. The stopping time $\sigma$ was defined in \eqref{e:sigma} and
the stationary measure $\pi_{\text{entrance}}$ was defined in Lemma \ref{lem:exit_entrance_stationary}. 

Therefore, we  also have the convergence
\ba
\vf\Big(\frac{l^n([n\cdot])(R^n([n\cdot])+d)}{\sqrt{n}}\Big)\Rightarrow W^\text{skew}_\gamma(\cdot, x), \quad n\to\infty.
\ea

\medskip
\noindent
\textbf{Step 2.} Let us construct a one-dimensional Markov chain $\widehat X^n$ that converges to a SBM.

Note that the mapping $(R,l)\mapsto l\cdot (R+d)$ is a bijection of the sets $\mathbb N\times\{+\}$ and $\{d+1,d+2,\dots\}$ and also of the 
sets $\mathbb N\times\{-\}$ and $\{\dots,-d-2,-d-1\}$.
 The following behavior of the sequence $(l^n(\cdot)(R^n(\cdot)+d))$
is decisive for out argument.
Assume that for some $k$ the sequence $R^n$
overjumps the set $\{-2d,\dots,0\}$ from above:
\ba
R^n(k)>0 \quad \text{and}\quad R^n(k+1)<-2d.
\ea
Then by definition of transition probabilities \eqref{e:over} we have
\ba
l^n(k) (R^n(k)+d)=l^n(k+1)(R^n(k+1)+d),
\ea
i.e., in this case the sequence $(l^n(\cdot)(R^n(\cdot)+d))$ spends two time steps at the same point.
Let us construct a new sequence $(\widehat X^n(k))_{k\geq 0}$ from the sequence $(l^n(k)(R^n(k)+d))_{k\geq 0}$ that skips one of these steps.

We define a random time change as follows.
Set
\ba
&\widehat \lambda_n(0):=0,\\
&\widehat \lambda_n(k):= \sum_{j=1}^k \Big(1- \1(R^n(k+1)>0,\ R^n(k)< -2d)\Big),\\
&\widehat \lambda_n^{-1}(k):=\inf\{j\geq 0\colon  \widehat \lambda_n(j)=k\},\\
&\widehat X^n(k):= l^n(\widehat\lambda_n^{-1}(k))(R^n(\widehat\lambda_n^{-1}(k))+d).
\ea
Notice that $\widehat X^n$ is a Markov chain with transition probabilities
\ba
\label{e:622}
\Pb(\widehat X^n(1)=j\ | \ \widehat X^n(0)=i)=
\begin{cases}
    \Pb(X(1)=j\ | \ X(0)=i), &  |i|>d,\\
    \Pb(X(\hat \tau_0)=j\ | \ X(0)=i), &  |i|\leq d,
\end{cases}
\ea
where $(X(k))_{k\geq 0}$ is the original perturbed random walk and  
$\widehat \tau_0:=\inf\{k\geq 0\colon |X(k)|> d\}$, see also  \eqref{eq:defn_of_exits_entra}.

Let us show that the number of skipped steps is negligible as $n\to\infty$ and convergence
\ba
\label{e:convXhat}
\vf\Big(\frac{\widehat X^n([n\cdot])}{\sqrt{n}}\Big) \Rightarrow W^\text{skew}_\gamma(\cdot, x),\quad n\to\infty.
\ea
holds true.

Indeed, we have the inequality
\ba
\label{e:6a}
{}[nt]-\widehat \lambda_n([nt])\leq \sqrt{n}L^n(t) \max(v_-, v_+), \quad t>0,
\ea
where $L^n $ is defined in \eqref{eq:L338}.
It follows from the weak convergence of $(L^n(\cdot))$ established Theorem \ref{thm:Walsh_lim_pertRW}, %{thm:limit_W_L674} 
that for all $t>0$
\ba
\label{e:6b}
 \Big| \frac{\widehat \lambda_n([nt])}{n}-t\Big|
\leq   \frac{L^n(t) \max(v_-, v_+)}{\sqrt{n}} +\frac1n
 \overset{\Pb}{\to} 0,\ n\to\infty.
\ea
Hence for any $T>0$ we have uniform convergence
\ba
\label{e:6c}
\sup_{t\in[0,T]}\Big| \frac{\widehat \lambda_n([nt])}{n}-t\Big|\overset{\Pb}{\to} 0,\ n\to\infty,
\ea
and by monotonicity of $t\mapsto \widehat \lambda_n([nt])$ 
\ba
\sup_{t\in[0,T]}\Big| \frac{\widehat \lambda_n^{-1}([nt])}{n}-t\Big|\overset{\Pb}{\to} 0,\ n\to\infty.
\ea
Therefore convergence \eqref{e:convXhat} follows by Corollary \ref{corl:composition ordinary}. 

\medskip
\noindent
\textbf{Step 3.}
Now we consider the 
original Markov chain $(X^n(k))_{k\geq 0}$ 
and define a random time change that removes times that $X^n$ spends in the set $\{-d,\dots,d\}$ after entering this set. Let us introduce a sequence
$\{\lambda_n (k)\}_{k\geq 0}$ as follows
\ba
\lambda_n(0)&:=0,\\
\lambda_n(k)&:= \sum_{j=1}^k \Big(\1_{|X^n(j)|>d}+  \1_{|X^n(j)|\leq d,\ |X^n(j-1)|>d}\Big)\\
&=k-  \sum_{j=1}^k  \1_{|X^n(j-1)|\leq d, |X^n(j)|\leq d},\quad k\geq 1,
\ea
The sequence $\lambda_n$ is a time homogeneous additive functional of a Markov chain $X^n$
that counts all the steps
of $X^n$ except those that start and end within the membrane. The inverse mapping
\ba
\lambda_n^{-1}(k) := \inf\{j \geq  0\colon \lambda_n(j) \geq  k\},\quad  k\geq  0,
\ea
is a sequence of stopping times, so that the process $k\mapsto X^n(\lambda^{-1}_n(k))$
is a time homogeneous Markov chain.
Assume that $X^n (0) = i\in\mathbb Z$. Then
\ba
\lambda_n^{-1}(1)=
\begin{cases}
1, \quad \text{if } |i| > d,\\
\widehat\tau_0 = \inf\{k\geq  0 \colon |X(k)| > d\}, \quad\text{if } |i| \leq d.
\end{cases}
\ea
Therefore, \eqref{e:622} yields
\ba
\Big( \widehat X^n(k)\Big)_{k\geq 0} \overset{\di}{=}\Big(X^n (\lambda_n^{-1}(k)) \Big)_{k\geq 0}.
\ea
From now on, without loss of generality
we assume that 
\ba
\Big(\frac{\widehat X^n([nt])}{\sqrt n}\Big)_{t\geq 0} = \Big(\frac{X^n (\lambda_n^{-1}([nt]))}{\sqrt n}\Big)_{t\geq 0}.
\ea

\medskip
\noindent
\textbf{Step 4.}
In order prove the weak convergence we will apply Corollary \ref{corl:randomWalkTimeChangeConvergence} and use the following representation:
\ba
\Big(\frac{X^n (\lambda_n^{-1}([nt]))}{\sqrt n}\Big)_{t\geq0}
=\Big(\frac{X^n (n\cdot ) }{\sqrt n}\circ {\eta_n(t)}\Big)_{t\geq0},
\ea
where $\eta_n(t)=\frac{\lambda_n^{-1}([nt])}{n}$. 
 Convergence \eqref{corl:randomWalkTimeChangeConvergence:condition} in Corollary \ref{corl:randomWalkTimeChangeConvergence} holds true because 
the supremum corresponding to \eqref{corl:randomWalkTimeChangeConvergence:condition}
does not exceed $\frac{2d}{\sqrt n}$. Hence it is left to check the condition \eqref{corl:randomWalkTimeChangeConvergence:lambda}.

Let 
\ba
\widehat N^n(k)=\sum_{j=0}^k \1_{|\widehat X^n(j)|\leq d}
\ea
be the number of visits of $\widehat X^n$ to the membrane, i.e., to the set $\{-d,\dots,d\}$.
We define 
\ba 
&\widehat \sigma_0^n:=0,\\
&\widehat \tau_k^n:=\inf\{j\geq\widehat \sigma_k^n\colon \ |X^n(j)|>d \}, \\
&\widehat \sigma_{k+1}^n=\inf\{j>\widehat \tau_k^n\colon  |X^n(j)|\leq  d\},\quad k\geq 0.
\ea
Denote the time spent in the membrane by
\ba
\zeta^n_k:=\widehat\tau_{k}^n-\widehat\sigma_{k}^n.
\ea
Due to Assumption \textbf{B}$_2$, the random variables $\zeta^n_k$ have finite moments of all orders.
We note that
\ba
k-\lambda_n(k)\leq \sum_{j=0}^{ \widehat N^n(k) }\zeta^n_j,
\ea
so that
\ba
\Big|t-\frac{\lambda_n([nt])}{n}\Big|\leq  \frac{1}{n} + \frac{1}{n}\sum_{j=0}^{ \widehat N^n([nt]) }\zeta^n_j.
\ea
Note that 
\ba
\frac{\widehat N^n([nt])}{\sqrt n}\leq  L^n(t)\max(v_-,v_+),
\ea
where $L^n$ is defined in \eqref{eq:L338} (compare with \eqref{e:6a}).
Similarly to \eqref{e:6b}, for any $t\geq 0$ we have
\ba
\label{e:NN}
\frac{\widehat N^n([nt])}{n}\stackrel{\P}{\to} 0,\quad n\to \infty.
\ea
For $|x|\leq d$, denote 
\ba
f(x):=\E[\zeta^n_0|X^n(0)=x]
\ea
and note that $f(\cdot)$ does not depend on $n$ and is bounded.
We write:
\ba
\sum_{j=0}^{ \widehat N^n([nt]) }\zeta^n_j = \sum_{j=0}^{ \widehat N^n([nt]) }\Big(\zeta^n_j -f(X^n(\widehat\sigma^n_j))\Big) 
+\sum_{j=0}^{ \widehat N^n([nt]) } f(X^n(\widehat\sigma^n_j)) .
\ea
Therefore we get
\ba
\label{e:fN}
\frac{1}{n}\sum_{j=0}^{ \widehat N^n([nt]) } f(X^n(\widehat\sigma^n_j)) \stackrel{\P}{\to} 0,\quad n\to \infty.
\ea
The random sequence
\ba
k\mapsto \sum_{j=0}^{k}\Big(\zeta^n_j -f(X^n(\widehat\sigma^n_j))\Big) 
\ea
is a martingale, so that by Doob's inequality for any $N\geq 1$
\ba
\label{e:doob}
\E \max_{0\leq k\leq N } \Big|\sum_{j=0}^{k}\Big(\zeta^n_j -f(X^n(\widehat\sigma^n_j))\Big)\Big|^2 
&\leq 4 \E \sum_{j=0}^{N}\Big(\zeta^n_j -f(X^n(\widehat\sigma^n_j))\Big)^2\\ 
&\leq 4(N+1)\max_{|x|\leq d} \E\Big[ \Big(\zeta^n_0 -f(X^n(\widehat\sigma^n_0))\Big)^2  \Big|X^n(0)=x\Big]\\
&\leq C(N+1),
\ea
for some $C>0$. Hence for any $\delta>0$
\ba
\frac1n\Big|
\sum_{j=0}^{ \widehat N^n([nt]) }\Big(\zeta^n_j -f(X^n(\widehat\sigma^n_j))\Big) \Big|
&\leq \frac1n\1(\widehat N^n([nt])\leq n\delta )\Big|
\sum_{j=0}^{ \widehat N^n([nt]) }\Big(\zeta^n_j -f(X^n(\widehat\sigma^n_j))\Big) \Big|\\
&+\frac1n\1(\widehat N^n([nt])> n\delta )\Big|
\sum_{j=0}^{ \widehat N^n([nt]) }\Big(\zeta^n_j -f(X^n(\widehat\sigma^n_j))\Big) \Big|\\
&\leq \frac1n\max_{0\leq k\leq n\delta}\Big|
\sum_{j=0}^{ k}\Big(\zeta^n_j -f(X^n(\widehat\sigma^n_j))\Big) \Big|\\
&+\frac1n\1\Big(\frac{\widehat N^n([nt])}{n}> \delta\Big)\Big|
\sum_{j=0}^{ \widehat N^n([nt]) }\Big(\zeta^n_j -f(X^n(\widehat\sigma^n_j))\Big) \Big|\\
\ea
The first term in the latter formula converges to zero in mean square sense due to \eqref{e:doob} whereas
the second term converges to zero in probability due to \eqref{e:NN}.
Combining all the above estimates yields
\ba
\Big|\frac{\lambda_n([nt])}{n}-t\Big| \stackrel{\P}{\to} 0,\quad n\to \infty.
\ea
Due to monotonicity, for any $T>0$ we have convergence:
\ba
&\sup_{t\in[0,T]}\Big| \frac{\lambda_n([nt])}{n}-t\Big|\overset{\Pb}{\to} 0,\ n\to\infty,\\
&\sup_{t\in[0,T]}\Big| \frac{\lambda_n^{-1}([nt])}{n}-t\Big|\overset{\Pb}{\to} 0,\ n\to\infty.
\ea
Hence, the application of Corollary \ref{corl:randomWalkTimeChangeConvergence} implies the desired convergence 
\ba
\frac{X^n([n\cdot])}{\sqrt{n}} \Rightarrow W^\text{skew}_\gamma(\cdot, x),\quad n\to\infty.
\ea
Inspecting the construction steps of the Markov chains $\cX^n$, $\widehat X^n$ and $X^n$ we conclude that
the parameter $\gamma$ defined in \eqref{e:gamma1} coincides with $\gamma$ in \eqref{eq:gamma_probab}.
\end{proof}

\section{Proofs of % Theorem \ref{thm:skew_non_rigorous}, 
Corollary \ref{c:sobm} and Theorem \ref{thm:Walsh_non_rigorous}\label{section:proof_corl_1_2}}

% \begin{proof}[Proof of Theorem \ref{thm:skew_non_rigorous}]
% It is sufficient to check that condition $\mathbf{B}_3$ of Theorem~\ref{thm:Skew_lim_pertRW} follows 
% from conditions $2^\circ$ and $3^\circ$ of Theorem~\ref{thm:skew_non_rigorous}. Indeed, for $|x|\leq d$ we have
% \ba
% \E\Big[|X(\tilde \tau_0)\,|\,X(0)=x\Big]
% &= \sum_{|y|\leq d}\sum_{k=1}^\infty \P\Big(\widetilde \tau_0=k,X(k-1)=y\,|\,X(0)=x\Big)\E[|X(k)|\,|\,\widetilde \tau_0=k,X(k-1)=y]\\
% &= \sum_{|y|\leq d}\sum_{k=1}^\infty \P\Big(\widetilde \tau_0=k,X(k-1)=y\,|\,X(0)=x\Big)\E[|X(1)|\,|\,\widetilde \tau_0=1,X(0)=y]\\
% &= \sum_{|y|\leq d}\sum_{k=1}^\infty \P\Big(\widetilde \tau_0=k,X(k-1)=y\,|\,X(0)=x\Big) \frac{\E[|X(1)|\1_{|X(1)|>d}\,|\,X(0)=y]}
% {\P(|X(1)|>d\,|\,X(0)=y) }\\
% &\leq  \sum_{|y|\leq d}
% \frac{\E[|X(1)|\1_{|X(1)|>d}\,|\,X(0)=y]} {\P(|X(1)|>d\,|\,X(0)=y) }<\infty.
% \ea
% In the last line we use the convention that $\frac{0}{0}=0$.
% \end{proof}

\begin{proof}[Proof of Corollary \ref{c:sobm}]
 
Denote for brevity the inverse function
\begin{equation}
\psi(x):=\varphi^{-1}(x)=x( v_+ \1_{x\geq 0} +v_- \1_{x< 0})
\end{equation}
Recall the SDE \eqref{e:SDEskew} and apply the It\^o-Tanaka formula with symmetric semimartingale local time to $\psi(W^\text{skew}_\gamma)$, see
\cite[Exercise 1.25, Chapter VI]{RevuzYor05}:
\begin{equation}
\label{e:XWgamma}
\begin{aligned}
Y(t)=\psi(W^\text{skew}_\gamma(t))&:=x
+ \int_0^t \Big(v_+\1_{W^\text{skew}_\gamma(s)>0}+ v_-\1_{W^\text{skew}_\gamma(s)<0}\Big)\,\di W(s)\\
&+\frac12 \gamma (v_+ +v_-)L_0^{W^\text{skew}_\gamma}(t) + \frac12 (v_+-v_-)L_0^{W^\text{skew}_\gamma}(t) \\
=&x
+ \int_0^t \Big(v_+\1_{Y(s)>0}+ v_-\1_{Y(s)<0})\Big)\,\di W(s) \\
&+\frac12 \gamma (v_+ +v_-)L_0^{W^\text{skew}_\gamma}(t) + \frac12 (v_+-v_-)L_0^{W^\text{skew}_\gamma}(t)
\end{aligned}
\end{equation}
and 
\begin{equation}
\label{e:Y1}
\begin{aligned}
|Y(t)|=|\psi(W^\text{skew}_\gamma(t))|&:=|x|
+ \int_0^t \Big(v_+\1_{W^\text{skew}_\gamma(s)>0}-v_-\1_{W^\text{skew}_\gamma(s)<0}\Big)\,\di W(s)\\
&+\frac12 \gamma (v_+ -v_-)L_0^{W^\text{skew}_\gamma}(t)+ \frac12 (v_+ +v_-)L_0^{W^\text{skew}_\gamma}(t).
\end{aligned}
\end{equation}
On the other hand, 
the process $Y$ is a continuous semimartingale itself. Hence, the It\^o-Tanaka formula applied to the process $Y$ yields
\begin{equation}
\label{e:Y2}
\begin{aligned}
|Y(t)|&=|x| 
+ \int_0^t \Big(\1_{Y(s)>0}- \1_{Y(s)<0}\Big)\,\di Y(s)+
  L_0^Y(t)\\
&=\int_0^t \Big(v_+\1_{W^\text{skew}_\gamma(s)>0}-v_-\1_{W^\text{skew}_\gamma(s)<0}\Big)\,\di W(s)+
 L_0^Y(t),
\end{aligned}
\end{equation}
where $L_0^Y(t)$ is the symmetric semimartingale local time defined by
\ba
L_0^Y(t)=\lim_{\ve\to0+}\frac{1}{2\ve}\int_0^t \1_{|Y(s)|<\ve} \di \langle Y, Y\rangle_s= \lim_{\ve\to0+}\frac{1}{2\ve}\int_0^t \Big(v_+^2\1_{Y(s)>0}+v_-^2\1_{Y(s)<0}\Big)\,\di s.
\ea
Comparing the representations \eqref{e:Y1} and \eqref{e:Y2} we conclude that
\begin{equation}
\begin{aligned}
L_0^Y(t)=\frac12\Big[ \gamma (v_+ -v_-)+ (v_+ +v_-)\Big]L_0^{W^\text{skew}_\gamma}(t).
\end{aligned}
\end{equation}
Substituting this formula into \eqref{e:XWgamma} and recalling  that $Y$ spends zero time at zero  we get 
\eqref{e:Xinf}.
\end{proof}

\begin{proof}[Proof of Theorem \ref{thm:Walsh_non_rigorous}]
Let $\cX$ be a Markov chain on $\cN^m\cup\{0\}$ from Theorem~\ref{thm:Walsh_non_rigorous}. Since the processes from Theorems 
\ref{thm:Walsh_non_rigorous}
and \ref{thm:Walsh_lim_pertRW} live on different state spaces, first we construct a Markov chain $\widehat\cX$ on $\cZ^m$ related to $\cX$
that satisfy assumptions of Theorem \ref{thm:Walsh_lim_pertRW}. 

In assumption $\mathbf{A}_1$, let the distribution of $\xi^{(i)}$ coincide with the distribution of $\xi^{(i)}_k$, $i=1,\dots,m$.
In particular, $\widehat\cX$ may visit the set $\{\dots,-1,0\}\times \{1,\dots,m\}$.

The jumps from the state $(0,i)$ are defined as follows:
\ba
\P\big(\widehat \cX(1)=( \widehat y,j)\,|\, \widehat \cX(0)=(0,i)  \big)
:=\P\big(\cX(1)=(\widehat y,j)\,|\, \cX(0)=0  \big),\quad i,j=1,\dots,m.
\ea
For $\widehat x<0$ and $\widehat y\in\mbN$ we set
\ba
\P\big(\widehat \cX(1)=(\widehat y,j)\,|\, \widehat \cX(0)=(\widehat x,i)  \big)
:= \P(\cX(1)=(\widehat y,j)\,|\, R(0)+\xi^{(i)}_1=\widehat x,l(0)=i \big),
% \cX(0)=(x,i), x+\xi^{(i)}_1<0  
\ea
where $\cX(0)=(R(0),l(0))$.

Let the initial distributions of $\widehat \cX(0)$ and $\cX(0)$ coincide in the following sense:
\ba
&\P\big(\widehat \cX(0)=(\widehat y,j) \big)
:=\P(\cX(0)=(\widehat y,j)\big),\quad \widehat y\in\mbN,\ j=1,\dots,m,  \\
&\P\big(\widehat \cX(0)=(0,1) \big):=\P(\cX(0)=0\big),
\ea
and $\P(\widehat \cX(0)=(\widehat y,j))=0$ in all other cases.

According to
Theorem \ref{thm:Walsh_lim_pertRW}, the scaling limit of $\widehat \cX$ is a WBM starting at 0.

Let 
\ba
\widehat \lambda(k)=\sum_{i=1}^{k} \1( \widehat R(i)\geq 0 )  .
\ea
The random time change $\widehat \lambda^{-1}(\cdot)$ removes time instants when $\widehat \cX$ visits the set  $\{\dots,-1,0\}\times \{1,\dots,m\}$.
It follows from the construction of $\widehat\cX$ and $\widehat\lambda$ that
\ba
\Big(\widehat  R(\widehat  \lambda^{-1}(k))\1_{l(\widehat  \lambda^{-1}(k))=i},\ i=1,\dots,m \Big)_{k\geq 0} 
\stackrel{\di}{=} \Big(R(k)\1_{l(k)=i},\ i=1,\dots,m \Big)_{k\geq 0} .
\ea
Following the argument of Step 2 in the proof of Theorem \ref{thm:Skew_lim_pertRW} we see that  scaling limit of the original Markov chain 
$\cX$ is the same WBM as for the scaling limit of $\widehat \cX$.
\end{proof}

\bibliographystyle{plain}
\bibliography{extracted}

\end{document}